\newtheorem{theo}{Theorem}[section]
\newtheorem{cor}[theo]{Corollary}
\newtheorem{prop}[theo]{Proposition}
\newtheorem{lemma}[theo]{Lemma}
\theoremstyle{remark}
\newtheorem{remark}[theo]{Remark}
\theoremstyle{definition}
\newtheorem{defi}[theo]{Definition}
\numberwithin{equation}{section}
\def \tr {\operatorname{tr}}
\def \Frob {\operatorname{Frob}}
\def \Res {\operatorname{Res}}
\def \res {\operatorname{res}}
\def \Gal {\operatorname{Gal}}
\def \Ql {\overline{\mathbb Q}_\ell}
\def \FFrob {\operatorname{Fr}}
\newcommand \nchi {n}
\begin{document}

\title{General Multiple Dirichlet Series from Perverse Sheaves}
\author{Will Sawin}
\address{Department of Mathematics \\ Columbia University \\ New York, NY}
\email{sawin@math.columbia.edu}

\maketitle

\begin{abstract} We give an axiomatic characterization of multiple Dirichlet series over the function field $\mathbb F_q(T)$, generalizing a set of axioms given by Diaconu and Pasol. The key axiom, relating the coefficients at prime powers to sums of the coefficients, formalizes an observation of Chinta. The existence of multiple Dirichlet series satisfying these axioms is proved by exhibiting the coefficients as trace functions of explicit perverse sheaves and using properties of perverse sheaves. The multiple Dirichlet series defined this way include, as special cases, many that have appeared previously in the literature. \end{abstract}

\tableofcontents

\section{Introduction}

\subsection{Background}

Multiple Dirichlet series were originally defined as Dirichlet series in multiple variables satisfying twisted muliplicativity properties and certain groups of functional equations. These were first motivated by moments of $L$-functions \citep{Siegel1955,HG85}, and have since been successfully used to calculate several moments, with recent examples including \citep*{Diaconu2019, DW,GaoZhao, GaoZhao2}. If one defines a Dirichlet $L$-function where the Dirichlet character is expressed as a Legendre symbol, as in
\[ L \left(s, \left( \frac{\cdot}{m} \right)\right) = \sum_{n=1}^{\infty} \left(\frac{n}{m} \right) n^{-s} \]
then it is natural to consider moments like \[ \sum_{m < X} \prod_{i=1}^kL \left(s_i, \left( \frac{\cdot}{m} \right)\right), \] which can be analyzed using the series \[ \sum_{m=1}^\infty \prod_{i=1}^kL \left(s_i, \left( \frac{\cdot}{m} \right)\right) m^{-s} = \sum_{n_1,\dots, n_k, m=1}^{\infty}  \Bigl(\prod_{i=1}^k \left(\frac{n_i }{m} \right)    \Bigr)m^{-s} \prod_{i=1}^k n_i^{-s_i}.\] A plausible strategy to analyze these moments is as follows. First, replace the coefficients $\prod_{i=1}^k \left(\frac{n_i }{m} \right)$ by another set of coefficients $a_{n_1,\dots,n_k, m }$ which agree with them for $n_1,\dots, n_k, m$ squarefree and relatively prime, but may differ for other values. Also choose the coefficients $a_{n_1,\dots,n_k, m }$ to ensure the series has better analytic properties. Next, use these analytic properties to estimate suitable integrals of the series. Finally, use a sieve to extract information about the corresponding integral with the original set of coefficients. Since the coefficients $\prod_{i=1}^k \left(\frac{n_i }{m} \right)$ satisfy a twisted multiplicativity analogous to the multiplicativity of the coefficients of classical Dirichlet series, one assumes the modified coefficients keep this twisted multiplicativity, i.e. \[ a_{n_1n_1', \dots, n_k n_k', mm'} =a_{n_1,\dots,n_k, m } a_{n_1',\dots,n_k', m' } \prod_{i=1}^k \left( \frac{n_i}{m'} \right) \left( \frac{n_i'}{m} \right) \] as long as $n_1,\dots,n_k,m$ are relatively prime to $n_1',\dots,n_k',m'$. Generally, the better analytic properties one seeks to obtain are functional equations, and analytic continuation enabled by those functional equations.

The most desirable would be a meromorphic continuation to $\mathbb C^k$ with an explicit description of the poles. This can be obtained when one has a functional equation in each variable generating a finite group of functional equations (typically a Weyl group). However, some recent work has studied multiple Dirichlet series with an infinite group of functional equations \citep{Whitehead}, where one expects only meromorphic continuation to a certain region in $\mathbb C^r$, and can only prove meromorphic continuation to a smaller region directly from the functional equations. Still, obtaining continuation to the larger region is sometimes possible \citep{Whitehead2}, and could hold the key to estimating higher moments of $L$-functions \citep*{DGH,DiaconuTwiss}.

Since the multiplicativity is twisted, one does not have an expression of the multiple Dirichlet series as an Euler product of local factors. However, twisted multiplicativity does still reduce the choice of coefficients for each tuple of numbers to the local choice of coefficients for each tuple of powers of a fixed prime. To obtain the desired functional equations, one needs that the generating series of these prime power coefficients satisfy certain analogous functional equations. Because these local functional equations were used to define the coefficients, the multiple Dirichlet series could only be uniquely defined when these functional equations were sufficient to uniquely characterize the generating functions. \citet{Chinta} first observed that, when working over the function field $\mathbb F_q(t)$, there was a local-to-global symmetry relating these generating functions to the multiple Dirichlet series. This could be proven by observing that they were both determined by their functional equations and then comparing their functional equations.

\subsection{Summary of results}

The goal of this paper is to provide a uniform construction of multiple Dirichlet series over the function field $\mathbb F_q(t)$, parameterized simply by the finite field $\mathbb F_q$, a character $\chi$ of $\mathbb F_q$, and a symmetric integer matrix $M$, that includes many multiple Dirichlet series previously constructed separately as well as new examples. Future work could investigate these new examples, finding functional equations they satisfy, regions to which they can be analytically continued, and applications to moments of $L$-functions. Furthermore, it may be possible to define new multiple Dirichlet series in the number field context by choosing the coefficients at tuples of powers of a prime $p$ to match the coefficients of the series defined here at powers of a polynomial over $\mathbb F_p$, and then to investigate their analytic properties also. 

Our approach is inspired by \citet{DiaconuPasol}, who showed that the local-to-global properties observed by \citet{Chinta}, combined with the twisted multiplicativity, uniquely characterize the multiple Dirichlet series by an inductive argument, and thus could be used as a definition of multiple Dirichlet series. However, they were only able to show the existence of multiple Dirichlet series satisfying these local-to-global properties in one particular family of cases, the one relating to moments of quadratic Dirichlet $L$-functions, by a lengthy \'{e}tale cohomology argument. In these cases, \citet{Whitehead} was able to show that the functional equations follow from the local-to-global properties.

We propose a new approach. We define multiple Dirichlet series that satisfy quite general twisted multiplicativity relations involving arbitrary characters, which are uniquely characterized by local-to-global properties. Here the matrix $M$ and character $\chi$ determine the exact function we twist the multiplicativity relation by. However, we define and construct the multiple Dirichlet series coefficients as trace functions of certain perverse sheaves.

Using this local-to-global property, it is possible to show that our multiple Dirichlet series include as a special case some multiple Dirichlet series that appear before in the literature. We prove this for two series defined by \citet{ChintaMohler} (Corollary \ref{easiest-example-cm-2} and \eqref{compare-to-cm}) and one defined by \citet{Chinta} (Proposition \ref{double-gauss-example}). For those defined by \cite{DiaconuPasol} the proof is automatic since their axioms are a special case of ours. It seems reasonable to expect, based on these examples, that every multiple Dirichlet series defined in the literature whose values at relatively prime tuples of squarefree numbers can be expressed in terms of Dirichlet characters, Jacobi symbols, and Gauss sums, are also special cases of our construction, while those expressed using Fourier coefficients of higher rank automorphic forms, as summarized in \citep*{CFH}, are not. However, it is very plausible that multiple Dirichlet series related to higher rank automorphic forms could arise from perverse sheaves constructed in a similar way using the Langlands parameter of the automorphic form. In addition to the examples, these expectations are motivated by the idea that the trace function of a perverse sheaf gives the best way to extend a function from ``generic" values like tuples of relatively prime squarefree numbers to all values, and therefore that every extension that satisfies nice analytic properties likely comes from a suitable perverse sheaf.

The idea that the trace function of a perverse sheaf gives a well-behaved function in analytic number theory over function fields is most prominent in the geometric Langlands program, where automorphic forms are expected, and in many cases known, to arise in this way, but it can also be seen in more elementary situations. For example, the divisor function arises from a perverse sheaf. More generally, so do the coefficients of the $L$-function of a Galois representation.

The author also expects that these multiple Dirichlet series will satisfy functional equations analogous to those satisfied by existing series like the Weyl group multiple Dirichlet series \citep*{BBCFH}, and possibly more general ones, with the exact nature of the functional equations depending on the parameters $M,\chi$. The examples included in this paper give initial evidence for this: Proposition \ref{double-gauss-example} covers a Weyl group multiple Dirichlet series that satisfies an interesting group of functional equations matching the Weyl group $S_3$, suggesting that further special cases of our construction may also satisfy similar functional equations. Furthermore \eqref{fourier-for-fe} gives a relation between the coefficients of two multiple Dirichlet series that can be used to prove a functional equation relating the series themselves, with the Fourier transform in that equation playing the same crucial role it does in the classical functional equations of the zeta function and Dirichlet $L$-functions, again suggesting that more general functional equations of this type should exist. Work in progress by the author and Ian Whitehead, as well as by Matthew Hase-Liu, aims to prove these functional equations in greater generality. This work should also enable us to realize further previously-defined multiple Dirichlet series as special cases of the construction of this paper, as these series are uniquely determined by their functional equations so it suffices to check the newly-defined series satisfy the same functional equations.


\subsection{Notation}

Let $\mathbb F_q[t]$ be the ring of polynomials in one variable over a finite field $\mathbb F_q$. Let $\mathbb F_q[t]^+$ be the subset of monic polynomials. Let $f'$ be the derivative of $f$ with respect to $t$.

Fix a natural number $n$. We always let $\chi\colon \mathbb F_q^\times \to \mathbb C^\times$ be a character of order $n$.  Let $\chi_m\colon \mathbb F_{q^m}^\times \to \mathbb C^\times$ be the composition of $\chi$ with the norm map $\mathbb F_{q^m} \to \mathbb F_q$.

Define a residue symbol \[ \left( \frac{f}{g} \right)_\chi \] for $(f,g) \in \mathbb F_q[t]$ coprime as the unique function that is separately multiplicative in $f$ and $g$ such that if $g$ is irreducible of degree $d$, \[ \left( \frac{f}{g} \right)_\chi  = \chi \bigl( f^{ \frac{q^d-1}{q-1}} \bigr) ,\] where we use the fact that $f^{ \frac{q^d-1}{q-1}}$ in $\mathbb F_q[T]/g = \mathbb F_{q^d}$ in fact lies in $\mathbb F_q$.

Let $\Res(f,g)$ be the resultant of $f$ and $g$. For $g$ monic, as it will almost always be, this is the product of the values of $f$ at the roots of $g$.

We define a ``set of ordered pairs of Weil numbers and integers" to be a set $J$ consisting of ordered pairs $j$ of a Weil number $\alpha_j$ and an integer $c_j$, such that no $\alpha_j$ appears twice in the set, and $c_j$ is never zero.

For $J_1, J_2$ two sets of ordered pairs, we define  $J_1 \cup J_2$ to be the union, except that if some Weil number $\alpha$ appears in both $J_1$ and $J_2$, we add the $c_j$s together, and if the sum is zero, we remove them. In other words, $J_1\cup J_2$ is the unique set of ordered pairs of Weil numbers and integers such that  \[\sum_{j \in J_1 \cup J_2} c_j \alpha_j^e = \sum_{j \in J_1} c_j \alpha_j^e + \sum_{j \in J_2} c_j \alpha_j^e\] for all integers $e$.

For a Weil number $\beta$, we take $\beta J$ to be the set of ordered pairs $ ( \beta \alpha_j, c_j)$, so that $\sum_{j \in \beta J} c_j \alpha_j^e = \beta^e \sum_{j \in J} c_j \alpha^e$ for all integers $e$.

We say a $\mathbb C$-valued function $\gamma(q,\chi)$ on pairs of a prime power $q$ and character $\chi$ of $\mathbb F_q^\times$ is a compatible system of Weil numbers if \[\gamma(q^e, \chi_e)= \gamma(q,\chi)^e\] for all $q,\chi,e$. For instance, the constant function $1$ is a compatible system of Weil numbers.

We say that a function $J(q,\chi)$ from pairs of a prime power $q$ and a character $\chi$ of $\mathbb F_q^\times$ to sets of ordered pairs of Weil numbers and integers is a compatible system of sets of ordered pairs if, whenever $J(q,\chi) =\{ (\alpha_j,c_j)\}$, we have $J (q^e, \chi_e) = \{ (\alpha_j^e, c_j)\}$, so that \[\sum_{j \in J (q^e, \chi_e) } c_j \alpha_j^r = \sum_{j \in J(q, \chi)} c_j \alpha_j^{re}.\]

 We now define the general construction of sheaves that will be key for our paper. Fix once and for all a prime $\ell$ invertible in $\mathbb F_q$ and an isomorphism between $\Ql $ and $\mathbb C$ (or just the fields of algebraic numbers within each), with which we will freely identify elements of $\Ql$ and $\mathbb C$. Let $X$ be an irreducible scheme of finite type over a field in which $\ell$ is invertible, generically smooth of dimension $d$, and $f$ a nonvanishing function on $X$. Let $U$ be the maximal smooth open set where $f$ is invertible and let $j\colon U \to X$ be the open immersion. We have a Kummer map $H^0(U, \mathbb G_m) \to H^1(U, \mu_{q-1} )$. The image of $f$ under this map defines a $\mu_{q-1} $-torsor. We can twist the constant sheaf $\Ql$ by the image of this torsor under $\chi\colon \mu_{q-1} = \mathbb F_q^\times \to \Ql^\times$, obtaining a lisse rank one sheaf $\mathcal L_\chi(f)$ on $U$. Because $U$ is smooth of dimension $d$, $\mathcal L_\chi (f)[d]$ is a perverse sheaf on $U$. Let $j_{*!} (\mathcal L_\chi(f) [d])$ be its middle extension from $U$ to $X$. Let \[ IC_{\mathcal L_\chi (f)} = j_{*!} (\mathcal L_\chi(f) [d])[-d]\] be this middle extension, shifted so it lies generically in degree zero.

\subsection{Construction and main theorem}

Let $r$ be a natural number and let $M$ be a symmetric $r \times r$ matrix with integer entries.

Let $d_1,\dots,d_{r}$ be natural numbers and $q$ a prime power so that $\mathbb F_q$ is a finite field. View $\mathbb A^{d_i}$ over $\mathbb F_q$ as the moduli space of monic polynomials of degree $d_i$, so that $\prod_{i=1}^{r} \mathbb A^{d_i} $ is a moduli space of tuples $(f_1,\dots,f_{r})$ of monic polynomials. On $\prod_{i=1}^{r} \mathbb A^{d_i}$, define the polynomial function \[F_{d_1,\dots,d_r}= \prod_{i=1}^r \Res (f_i', f_i) ^{M_{ii} }  \prod_{1\leq i< j \leq r}  \Res(f_i, f_j)^{ M_{ij}}.\]

Let \[K_{d_1,\dots,d_r}= IC_{\mathcal L_\chi(F_{d_1,\dots,d_{r}})}.\] Given a tuple of polynomials $(f_1,\dots,f_{r})$ of degrees $d_1,\dots, d_{r}$  over $\mathbb F_q$, let $a(f_1,\dots,f_{r}; q, \chi, M)$ be the trace of Frobenius acting on the stalk of $K_{d_1,\dots,d_{r}}$ at $(f_1,\dots,f_{r})$.

Define the multiple Dirichlet series \[Z(s_1,\dots,s_{r};q,\chi,M) = \sum_{f_1,\dots,f_{r} \in \mathbb F_q[t]^+} \frac{a(f_1,\dots,f_{r}; q, \chi,M) }{ \prod_{i=1}^{r} q^{- (\deg f_i) s_i}}.\]

The main theorem of this paper, giving an axiomatic characterization of the coefficients of the geometrically defined multiple Dirichlet series $Z(s_1,\dots,s_{r};q,\chi,M)$, is as follows.

\begin{theo}\label{axiomatics} For any fixed $M$, \[ a (f_1,\dots, f_r; q ,\chi,M) \] is the unique function, that, together with a function $J(d_1,\dots,d_r; q,\chi,M)$ from tuples of natural numbers $d_1,\dots, d_r$, to compatible systems of sets of ordered pairs of Weil numbers and integers, satisfies the axioms

\begin{enumerate}

\item If $f_1,\dots,f_{r}$ and $g_1,\dots, g_{r}$ satisfy $\gcd(f_i,g_j)=1$ for all $i$ and $j$, then we have \[ a ( f_1g_1,\dots, f_{r} g_{r}; q , \chi, M) \] \[= a(f_1,\dots,f_{r} ; q, \chi, M) a(g_1,\dots, g_{r} ; q, \chi, M)  \prod_{1 \leq i \leq r} \left( \frac{f_i}{g_i} \right)_{\chi}^{M_{ii}} \left( \frac{g_i}{f_i} \right)_{\chi}^{M_{ii}}   \prod_{1 \leq i < j \leq r}  \left( \frac{f_i}{g_j} \right)_{\chi}^{M_{ij}}\left( \frac{g_i}{f_j} \right)_{\chi}^{M_{ij}} .\] 

\item $a(1,\dots,1;q,\chi,M)=1$ and $a(1,\dots, 1, f, 1,\dots,1;q,\chi,M)=1$ for all linear polynomials $f$. 

\item 
   \[ a(\pi^{d_1},\dots, \pi^{d_r}; q, \chi, M) =\left(\frac{\pi'}{\pi}\right)_{\chi} ^{ \sum_{i=1}^r { d_i M_{ii}} }   \sum_{j \in J(d_1,\dots,d_r; q,\chi, M)}  c_j \alpha_j^{\deg \pi}.\]
   
  \item  
\[ \sum_{\substack{f_1,\dots,f_{r} \in \mathbb F_q[t]^+\\ \textrm{deg }f_i= d_i}}  a(f_1,\dots,f_{r}; q, \chi,M)= \sum_{j \in J(d_1,\dots,d_r; q,\chi, M)}  c_j  \frac{ q^{ \sum_{i=1}^{r} d_i}}{\overline{\alpha}_j }.\]
 
\item  $|\alpha_j| < q^{ \frac{  \sum_{i=1}^{r} d_i -1 }{2}}$ as long as $\sum_{i=1}^{r} d_i \geq 2$. 
 
 \end{enumerate}
  
 \end{theo}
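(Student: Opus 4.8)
The plan is to prove this theorem in two halves: first, that the geometrically defined $a(f_1,\dots,f_r;q,\chi,M)$ together with a suitable $J$ satisfies axioms (1)--(5); second, that the axioms determine $a$ uniquely. For the uniqueness half, I would argue by induction on $\sum_i d_i$. Axiom (2) provides the base cases (all $d_i$ zero, or a single linear factor). Given the values of $a$ on all tuples with smaller total degree, axiom (1) (twisted multiplicativity) reduces the determination of $a(f_1,\dots,f_r;q,\chi,M)$ to the prime-power case, i.e.\ to $a(\pi^{d_1},\dots,\pi^{d_r};q,\chi,M)$ for $\pi$ irreducible. Axiom (3) expresses this prime-power value in terms of the unknown $J(d_1,\dots,d_r;q,\chi,M)$, so the whole problem collapses to showing that the finitely many data $\{(\alpha_j,c_j)\}$ in $J(d_1,\dots,d_r;q,\chi,M)$ are themselves forced. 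Here is where axiom (4) does the work: its left-hand side, $\sum_{\deg f_i = d_i} a(f_1,\dots,f_r;q,\chi,M)$, can be expanded using axiom (1) in terms of the already-known lower-degree coefficients and the still-unknown prime-power coefficients of total degree exactly $\sum_i d_i$; the only genuinely new unknowns appearing are the $\sum_j c_j \alpha_j^{\deg\pi}$, and matching the resulting expression against the right-hand side $\sum_j c_j q^{\sum d_i}/\overline\alpha_j$ for all $q$ in a compatible system pins down the multiset $\{\alpha_j\}$ and the integers $c_j$: the compatibility condition forces the $\alpha_j$ to be genuine Weil numbers varying correctly with $q$, and axiom (5), the strict bound $|\alpha_j| < q^{(\sum d_i - 1)/2}$, eliminates any ambiguity coming from pairs $(\alpha,\overline\alpha)$ or from the failure of $\alpha_j \mapsto q^{\sum d_i}/\overline\alpha_j$ to be injective on the allowed range. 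This is essentially the Diaconu--Pasol inductive argument, adapted to arbitrary $M$ and $\chi$.

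For the existence half, the content is that the trace functions of $K_{d_1,\dots,d_r} = IC_{\mathcal L_\chi(F_{d_1,\dots,d_r})}$ satisfy (1)--(5). Axiom (1) should follow from a factorization property of the sheaves $IC_{\mathcal L_\chi(F)}$ under the ``multiplication of coprime polynomials'' map $\prod_i \mathbb A^{d_i} \times \prod_i \mathbb A^{e_i} \to \prod_i \mathbb A^{d_i+e_i}$: on the locus where all relevant pairs are coprime this map is étale onto its image, the function $F_{d_1+e_1,\dots}$ pulls back to $F_{d_1,\dots}\cdot F_{e_1,\dots}$ times an explicit product of resultant cross-terms, and the residue symbols $\left(\frac{f_i}{g_j}\right)_\chi$ arise precisely as the trace of Frobenius on the Kummer sheaf of those cross terms --- this is the standard reciprocity-type computation relating $\mathcal L_\chi(\Res(f,g))$ to $\left(\frac{f}{g}\right)_\chi$ via the product formula. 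Axiom (2) is immediate because in low degree the relevant $U = X$ and $\mathcal L_\chi(F)$ is either trivial or the computation is direct; one must check that for a single linear $f$ in the $i$-th slot the function $F$ is a nonzero constant (the discriminant/resultant of a linear polynomial), so $K$ is the constant sheaf and the trace is $1$. The definitions in axiom (3) of $J$ and of the prefactor $\left(\frac{\pi'}{\pi}\right)_\chi$ are again a direct translation: restricting $F_{d_1,\dots,d_r}$ to the locus $f_i = \pi^{d_i}$ makes the diagonal resultants $\Res(f_i',f_i)$ contribute $\Res((\pi^{d_i})',\pi^{d_i})$, whose $\chi$-symbol is $\left(\frac{\pi'}{\pi}\right)_\chi^{d_i M_{ii}}$ up to a factor depending only on $\deg\pi$, and the remaining variation defines $J(d_1,\dots,d_r;q,\chi,M)$ as the (virtual) Frobenius eigenvalues on the stalk; compatibility under $q \mapsto q^e$ is automatic because everything is defined by base change of a single sheaf over $\mathbb F_q$.

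The heart of the matter, and the step I expect to be the main obstacle, is axiom (4): the identity $\sum_{\deg f_i = d_i} a(f_1,\dots,f_r;q,\chi,M) = \sum_j c_j\, q^{\sum d_i}/\overline\alpha_j$. This is exactly the local-to-global / duality statement advertised in the introduction. I would prove it by: (i) applying the Grothendieck--Lefschetz trace formula to rewrite the left side as $\sum_k (-1)^k \tr(\Frob \mid H^k_c(\prod_i \mathbb A^{d_i}_{\overline{\mathbb F}_q}, K_{d_1,\dots,d_r}))$; (ii) using the scaling action of $\mathbb G_m$ on $\prod_i \mathbb A^{d_i}$ (scaling the roots, equivalently $f_i(t) \mapsto \lambda^{-d_i} f_i(\lambda t)$) under which $F_{d_1,\dots,d_r}$ is equivariant of some weight, giving $\prod_i \mathbb A^{d_i}$ a conical structure with cone point $(t^{d_1},\dots,t^{d_r})$ — wait, more precisely the fixed point is the origin, corresponding to $f_i = t^{d_i}$; the contraction lemma for $\mathbb G_m$-actions then identifies $H^*_c$ of the whole space with coefficients in $K$ with the costalk (= stalk of Verdier dual) at that fixed point; (iii) invoking Verdier duality for the middle extension, $\mathbb D(IC_{\mathcal L_\chi(F)}[d]) \cong IC_{\mathcal L_{\chi^{-1}}(F)}[d]$ up to twist, to convert the costalk back into the complex conjugate of a stalk — and the stalk at $(t^{d_1},\dots,t^{d_r})$ is governed by the same combinatorial data $J(d_1,\dots,d_r;q,\chi,M)$ appearing in axiom (3), because $(t^{d_1},\dots,t^{d_r}) = (\pi^{d_1},\dots,\pi^{d_r})$ with $\pi = t$. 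Assembling these, the $q^{\sum d_i}$ is the Tate twist from duality (dimension $\sum d_i$), the $1/\overline{\alpha}_j$ is the dual eigenvalue, and the $c_j$ match. The delicate points will be: checking the sheaf is genuinely $\mathbb G_m$-equivariant for the chosen action (the weight of $F$ under scaling must be handled, possibly by twisting the action on the Kummer sheaf), verifying the hypotheses of the contraction/hyperbolic-localization lemma for this non-proper conical space with a non-constant perverse coefficient sheaf, and correctly bookkeeping all Tate twists and the swap $\chi \leftrightarrow \chi^{-1}$ so that the right-hand side comes out exactly as stated rather than off by a twist. Axiom (5) is then the purity/weight bound: $IC$ sheaves of geometric origin are pure, so $H^k_c$ has weights $\le k$, and the strict inequality $|\alpha_j| < q^{(\sum d_i - 1)/2}$ for $\sum d_i \ge 2$ should come from the fact that the middle-extension (as opposed to $j_!$) kills the top-weight piece, i.e.\ the absence of a trivial quotient, together with the genericity hypotheses on $U$ — this will require a separate argument that in the relevant range the relevant cohomology group $H^{\sum d_i}_c$ of weight exactly $\sum d_i$ vanishes, which is plausibly where one uses that $F_{d_1,\dots,d_r}$ is not a perfect $n$-th power (so $\mathcal L_\chi(F)$ is nontrivial) once the degrees are large enough.
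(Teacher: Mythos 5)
The overall structure of your proposal --- prove existence by checking the five axioms for the sheaf trace functions, then prove uniqueness by induction on $\sum_i d_i$ --- matches the paper exactly, and your descriptions of axioms (1), (2), (4), and the uniqueness induction are essentially faithful to what the paper does (including the role of axiom (5) in the induction: it separates the eigenvalues $q\alpha_j$, of size $< q^{(d+1)/2}$, from the eigenvalues $q^d/\overline\alpha_j$, of size $> q^{(d+1)/2}$, so the two signed multisets can be recovered unambiguously). The points you flag as delicate in axiom (4) --- equivariance weight, contraction hypotheses, twist bookkeeping --- are indeed the ones that need care, and the paper resolves them by composing the dilation action with the $n$th power map $\mathbb G_m \to \mathbb G_m$ (so the weight of $F$ becomes a multiple of $n$ and Lemma \ref{scale-invariance} applies) and by the contraction and equivariant vanishing lemmas (Lemmas \ref{cone-contractible} and \ref{equivariant-vanishing}).

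The one genuine gap is in axiom (3) for primes $\pi$ of degree $e>1$. Your sketch --- restrict $F$ to the locus $f_i=\pi^{d_i}$, peel off the residue symbol from the diagonal resultant, declare the remainder to be $J$ --- works verbatim only for $\pi=T$ (or, by translation invariance, $\pi=T-x$ with $x\in\mathbb F_q$), because $J(d_1,\dots,d_r;q,\chi,M)$ is \emph{defined} as the Frobenius eigenvalues at the fixed point $(T^{d_1},\dots,T^{d_r})$ of $\prod_i\mathbb A^{d_i}$. For $\deg\pi=e>1$ the point $(\pi^{d_1},\dots,\pi^{d_r})$ lies in $\prod_i\mathbb A^{ed_i}$, a completely different space, and there is no a priori reason the stalk of $K_{ed_1,\dots,ed_r}$ there should be expressible through $J(d_1,\dots,d_r;q,\chi,M)$. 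Establishing that relation is the hard part of Lemma \ref{axiom-iii} and it goes through Weil restriction: one compares $norm^* K_{ed_1,\dots,ed_r}$ with $WR_{\mathbb F_{q^e}}^{\mathbb F_q}K_{d_1,\dots,d_r}$, using Lemma \ref{norms} to identify the Weil restriction of $IC_{\mathcal L_\chi(F)}$ with $IC_{\mathcal L_\chi(NF)}$, Lemma \ref{weil-restriction-trace-function} to translate stalk traces, and an explicit computation of the ratio $F(Nf_1,\dots,Nf_r)/NF(f_1,\dots,f_r)$ in resultants of $\pi$ to account for the discrepancy between $F\circ norm$ and $NF$. Your remark that ``compatibility under $q\mapsto q^e$ is automatic because everything is defined by base change'' addresses the compatibility of $J$ across finite fields (varying $q$ at a fixed geometric point), but not the varying-$\pi$ problem at fixed $q$; these are different statements, and base change alone does not give the latter.

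A smaller point: your axiom (5) argument reasons about $H^*_c$ weights and anticipates needing a separate vanishing of the top-weight piece of $H^d_c$. But the axiom concerns the stalk eigenvalues at $(T^{d_1},\dots,T^{d_r})$ directly, and the paper's argument for Lemma \ref{axiom-v} is shorter and avoids that: the IC support condition says $\mathcal H^i(K)$ is supported in codimension $\geq i+1$ for $i>0$; translation invariance (Lemma \ref{translation-invariance}) forces any nonzero stalk cohomology to occur along a $\mathbb G_a$-orbit of dimension one, so in codimension $\leq d-1$; hence only degrees $i\leq d-2$ occur, and purity gives $|\alpha_j|\leq q^{(d-2)/2}<q^{(d-1)/2}$. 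Going through $H^*_c$ and converting back via axiom (4)'s duality is possible but would just rederive this support bound with extra steps.
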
 

Here axioms (3) and (4) give the local-to-global principle, (1) is the twisted multiplicativity, and (2) and (5) are normalizations needed to ensure the axioms define a unique set of coefficients, with (5) also ensuring that individual coefficients are not so large that they dominate the series.

Note that the condition that $J$ be a compatible system relates different finite fields at a time, so it is not possible to check these axioms working only in a specific finite field $q$. Rather, one must calculate in all extensions of a fixed finite field $\mathbb F_{q_0}$.

In the case, $\chi$ is quadratic, when $M$ is the sum of a matrix with a row of ones and the rest of the entries zero and its transpose, the existence and uniqueness parts of Theorem \ref{axiomatics} were obtained in \citep{DiaconuPasol}.

\subsection{Perverse sheaves}

The key geometric idea of this paper is that the local-to-global property described by axioms (3) and (4) is a consequence of duality properties of perverse sheaves. The local-to-global property relates the sum of many coefficients of the multiple Dirichlet series to a single coefficient, via the set of Weil numbers $J$. Geometrically, we interpret this as a relation between the sum of the trace of Frobenius on the stalk of a perverse sheaf over all the $\mathbb F_q$-points of a variety and the value at a single point. The Lefschetz fixed point formula relates the sum of the trace of Frobenius over all $\mathbb F_q$-points to the compactly supported cohomology of the variety with coefficients in the perverse sheaf. Because there is an action of the multiplicative group on the variety that fixes only that point, giving it a conical structure, a generalization of the result that the cohomology of a cone matches the cohomology of the point relates the stalk of that point to the usual cohomology. Verdier duality for perverse sheaves then relates the usual and compactly-supported cohomology.

Furthermore, axiom (1) will follow from a twisted multiplicativity property of the polynomial functions $F_{d_1,\dots, d_r}$ used to construct the perverse sheaves $K_{d_1,\dots, d_r}$. We then transform this identity involving the polynomials $F_{d_1,\dots, d_r}$ to an isomorphism involving the perverse sheaves $K_{d_1,\dots, d_r}$, using fundamental properties of the intermediate extension construction, which then implies an identity involving the trace functions $a(f_1,\dots, f_r; q,\chi ,M)$ of the perverse sheaves $K_{d_1,\dots, d_r}$.

Axiom (5) follows from the theory of weights and purity for perverse sheaves, which gives bounds for the Frobenius eigenvalues in each degree.

Characteristic zero analogues of the perverse sheaves $ IC_{\mathcal L_\chi(F_{d_1,\dots,d_{r}})}$ used in our construction have been studied before from the perspective of quantum groups and Nichols algebras~\citep*{BFS,KS}. Some of our (brief) calculations with these sheaves in Section \ref{axioms} are characteristic $p$ analogues of results previously obtained in the characteristic zero setting in those works. This connection between multiple Dirichlet series and quantum groups seems different from the usual one, as the coefficients of the multiple Dirichlet series correspond to traces of Frobenius on stalks of the sheaves that can be computed from the cohomology of the positive part of the small quantum group, but neither the Frobenius action nor the cohomology of the positive part appear in the usual picture. I learned of these connections thanks to helpful conversations with Jordan Ellenberg, Michael Finkelberg, Mikhail Kapranov, Tudor P\u{a}durariu, and Vadim Schechtman.

While writing this paper, at various times the author served as a Clay Research Fellow, was supported by NSF grant DMS-2101491, and was a Sloan Research Fellow.
 I would like to thank Adrian Diaconu for helpful conversations, Matthew Hase-Liu and River Sawin for helping me find typos, and the anonymous referee for many helpful comments.

\section{Preliminaries}

\subsection{Further notations}

We use $\xi$ to refer to, when $q$ is odd, the unique character $\xi\colon \mathbb F_q^\times \to \mathbb C^\times$ of order $2$. If $n$ is even, we have $\xi=\chi^{n/2}$.

For a rational function $f$, let $\res(f)$ be its residue at $\infty$, normalized so that $\res(1/t)=1$, (i.e. the coefficient of $t^{-1}$ when $f$ is expressed as a formal Laurent series in $t^{-1}$). 

For $x \in \mathbb F_q$, let $\psi(x) = e^{ 2 \pi i \tr_{\mathbb F_q}^{\mathbb F_p} x /p}$. Let $G(\chi,\psi) = \sum_{x \in \mathbb F_q^\times} \chi(x) \psi(x)$. Let \[ g_\chi ( f_1,f_2) = \sum_{h \in \mathbb F_q[t]/ f_2} \left( \frac{h}{f_2} \right)_\chi \psi \left( \operatorname{res} \left(\frac{h f_1}{ f_2} \right) \right).\]

We say a function $\gamma(q,\chi)$ on pairs of a prime power $q$ and character $\chi$ of $\mathbb F_q^\times$ is a sign-compatible system of Weil numbers if \[-\gamma(q^e, \chi_e)= (-\gamma(q,\chi))^e\] for all $q,\chi,e$. For instance, the Hasse-Davenport identities imply that $G(\chi^r,\psi)$ is sign-compatible for any integer $r$.

We let \[\lambda(d_1,\dots,d_{r};q, \chi, M ) = \sum_{\substack{f_1,\dots,f_{r} \in \mathbb F_q[t]^+\\ \textrm{deg }f_i= d_i}}  a(f_1,\dots,f_{r}; q, \chi,M)\] so that \[ Z(s_1,\dots,s_{r} ;q,\chi, M)= \sum_{d_1,\dots,d_{r} \in \mathbb N} \frac{ \lambda(d_1,\dots,d_{r};q,\chi,M)}{ \prod_{i=1}^{r} q^{-d_i s_i}}.\]

For $\pi$ a prime polynomial, we let $v_\pi$ be the $\pi$-adic valuation of polynomials, i.e. $v_\pi(f)$ is the maximum power of $\pi$ dividing $f$.

\subsection{Function field evaluations}

Certain functions important in classical number theory, such as the M\"{o}bius function, power residue symbol, and Gauss sums, admit alternate formulas in the function field $\mathbb F_q(t)$, that make clear their relationship to the algebra of polynomials.

\begin{lemma}\label{resultant} We have \[ \left( \frac{f}{g} \right)_\chi  = \chi (\Res(f,g)) .\] \end{lemma}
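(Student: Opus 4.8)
The plan is to reduce to the case where $g$ is irreducible, where both sides are essentially the definition. First I would check that $\chi(\Res(f,g))$ is, like $\left(\tfrac{f}{g}\right)_\chi$, separately multiplicative in $f$ and $g$ for $(f,g)$ coprime. This is immediate from the chosen normalization of the resultant: since $\Res(f,g)$ is the product of the values of $f$ over the roots of $g$ (taken with multiplicity), we have $\Res(f_1f_2, g) = \Res(f_1,g)\Res(f_2,g)$ because each factor splits as a product over the roots of $g$, and $\Res(f, g_1g_2) = \Res(f,g_1)\Res(f,g_2)$ because the roots of $g_1 g_2$ are the roots of $g_1$ together with the roots of $g_2$; applying the homomorphism $\chi$ preserves these factorizations, and coprimality guarantees all the values involved are nonzero so that $\chi$ is defined on them. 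Since $\left(\tfrac{f}{g}\right)_\chi$ is by definition the unique separately multiplicative function taking the prescribed value for $g$ irreducible, it suffices to verify the identity when $g = \pi$ is irreducible.

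Now suppose $\pi$ is irreducible of degree $d$ and $\gcd(f,\pi) = 1$. Let $\alpha \in \mathbb F_{q^d}$ be a root of $\pi$; its Galois conjugates over $\mathbb F_q$ are $\alpha, \alpha^q, \dots, \alpha^{q^{d-1}}$, and these are exactly the roots of $\pi$. Hence
\[
\Res(f,\pi) = \prod_{i=0}^{d-1} f\!\left(\alpha^{q^i}\right) = \prod_{i=0}^{d-1} f(\alpha)^{q^i} = f(\alpha)^{\frac{q^d-1}{q-1}},
\]
using that $f$ has coefficients in $\mathbb F_q$ so that $f(\alpha^{q^i}) = f(\alpha)^{q^i}$. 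Identifying $\mathbb F_q[t]/\pi$ with $\mathbb F_{q^d}$ via $t \mapsto \alpha$, the element $f(\alpha) \in \mathbb F_{q^d}^\times$ is exactly the image of $f$, and $f(\alpha)^{\frac{q^d-1}{q-1}} = N_{\mathbb F_{q^d}/\mathbb F_q}(f(\alpha))$ lies in $\mathbb F_q^\times$; this is precisely the quantity $f^{\frac{q^d-1}{q-1}}$ appearing in the definition of $\left(\tfrac{f}{\pi}\right)_\chi$. Therefore $\chi(\Res(f,\pi)) = \chi\!\left(f^{\frac{q^d-1}{q-1}}\right) = \left(\tfrac{f}{\pi}\right)_\chi$, and the general case follows by multiplicativity.

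The only point requiring any care is confirming that the resultant convention adopted in the paper is the one that is genuinely multiplicative in both variables with no intervening sign (as opposed to conventions differing by $\pm 1$ or by leading coefficients, which are trivial here since all polynomials are monic); once that is pinned down the argument is a direct unwinding of definitions, with the identity $\beta^{(q^d-1)/(q-1)} = N_{\mathbb F_{q^d}/\mathbb F_q}(\beta)$ doing the real work.
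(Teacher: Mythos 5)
Your proof is correct and follows essentially the same route as the paper: reduce to $g$ irreducible via multiplicativity of the resultant, then compute $\Res(f,\pi) = f(\alpha)^{(q^d-1)/(q-1)}$ using the Galois conjugates of a root $\alpha$. You are a bit more explicit than the paper in verifying multiplicativity in $f$ as well as in $g$ before invoking the uniqueness clause of the definition, but the substance is identical.
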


\begin{proof} Because the right side, by definition, is multiplicative in $g$, it suffices to consider the case where $g$ is prime. Then for $\alpha$ a root of $g$, the other roots are $\alpha^q, \dots, \alpha^{q ^{d-1}} $. Hence the product of the values of $f$ at these roots is \[ \prod_{i=0}^{d-1} f( \alpha^{q^i}) = \prod_{i=0}^{d-1} f(\alpha)^{q^i} = f(\alpha)^{ \frac{ q^{d}-1}{q-1}} .\] Because $\alpha$ is a root of $g$, we can evaluate this by setting $\alpha=T$ and reducing mod $g(T)$, which matches the definition of $\left( \frac{f}{g} \right)_\chi $. \end{proof}

Under this interpretation, the reciprocity law for power residue symbols is given by the following fact:

\begin{lemma}\label{reciprocity} For monic $f,g$, \[\Res(f,g) = (-1)^{\deg f \deg g} \Res(g,f).\] \end{lemma}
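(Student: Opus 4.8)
The statement $\Res(f,g) = (-1)^{\deg f \deg g} \Res(g,f)$ for monic $f, g$ is a classical identity about resultants. The cleanest route is to use the product formula for the resultant in terms of roots. Writing $f = \prod_{i=1}^{m}(t - \alpha_i)$ and $g = \prod_{k=1}^{n}(t - \beta_k)$ over an algebraic closure, where $m = \deg f$ and $n = \deg g$, one has the standard formulas
\[
\Res(f,g) = \prod_{i=1}^{m} g(\alpha_i) = \prod_{i=1}^{m}\prod_{k=1}^{n}(\alpha_i - \beta_k),
\]
and symmetrically
\[
\Res(g,f) = \prod_{k=1}^{n} f(\beta_k) = \prod_{k=1}^{n}\prod_{i=1}^{m}(\beta_k - \alpha_i).
\]
(Here we are using precisely the normalization of $\Res$ given in the paper, "the product of the values of $f$ at the roots of $g$," together with the fact that for monic polynomials this equals the double product over pairs of roots; this double-product identity is itself immediate from $g(\alpha_i) = \prod_k (\alpha_i - \beta_k)$.) Comparing the two double products term by term, each factor $(\beta_k - \alpha_i) = -(\alpha_i - \beta_k)$, and there are $mn$ such factors, so $\Res(g,f) = (-1)^{mn}\Res(f,g)$, which rearranges to the claim since $(-1)^{mn} = (-1)^{mn}$ is its own inverse.

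First I would recall the two product formulas above, citing them as the standard definition/characterization of the resultant of monic polynomials; the paper has already committed to the "product of values at roots" definition, so the only thing needing a line of justification is that $\prod_i g(\alpha_i) = \prod_{i,k}(\alpha_i - \beta_k)$, which follows by substituting the factorization $g(t) = \prod_k (t - \beta_k)$. Then I would perform the sign bookkeeping: pulling a factor of $-1$ out of each of the $mn$ binomials $(\alpha_i - \beta_k)$ turns $\Res(f,g)$ into $(-1)^{mn}\Res(g,f)$. Finally, multiply through by $(-1)^{mn}$ to get the displayed form with the sign on the other side. One should note the identity makes sense and remains valid even when $f$ and $g$ are not coprime (both sides are zero), though the application via Lemma \ref{resultant} only needs the coprime case.

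There is essentially no serious obstacle here — this is a textbook fact. The only point requiring a modicum of care is making sure the normalization of $\Res$ used in the statement matches the root-product formula (as opposed to, say, a convention carrying a leading-coefficient factor $a^{\deg g} b^{\deg f}$); since both $f$ and $g$ are monic, all such leading-coefficient factors are $1$ and the ambiguity disappears. So the "hard part," such as it is, is merely to state clearly which formula for $\Res$ one is invoking and to count that there are exactly $\deg f \cdot \deg g$ sign flips.
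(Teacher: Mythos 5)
Your proof is correct and uses essentially the same argument as the paper: expand both resultants as double products over pairs of roots and count the $\deg f \cdot \deg g$ sign flips. One small slip: you state you are using the paper's normalization (``product of the values of $f$ at the roots of $g$'') but then write $\Res(f,g) = \prod_i g(\alpha_i)$, which is the opposite convention; this is harmless here since the identity being proved is symmetric under swapping the two conventions, but the mismatch is worth fixing for consistency with the rest of the paper.
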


\begin{proof} For $\alpha_1,\dots,\alpha_{\deg f}$ the roots of $f$ and $\beta_1,\dots, \beta_{\deg g}$ the roots of $g$,  \[ \Res(f,g) =\prod_{i=1}^{\deg f} \prod_{j=1}^{\deg g} (\beta_j - \alpha_i)\] and  \[ \Res(g,f) =\prod_{i=1}^{\deg f} \prod_{j=1}^{\deg g} (\beta_j - \alpha_i)\] so switching each term, we obtain $\deg f \deg g$ factors of $(-1)$. \end{proof}

Let $\Delta(f)$ be the discriminant of $f$. Let $\mu$ be the M\"{o}bius function. 

\begin{lemma}\label{mobius-evaluation} For $q$ odd we have \begin{equation}\label{me-1} \mu(f)=(-1)^{\deg f} \xi(\Delta(f)) \end{equation} and \begin{equation}\label{me-2} \Delta(f) = (-1)^{(\deg f) (\deg f-1)/2} \Res(f',f)\end{equation}  so \begin{equation}\label{me-3}\mu(f) =(-1)^{\deg f} (-1)^{\frac{ \deg f (\deg f-1)(q-1)}{4}} \left( \frac{f'}{f} \right)_{\xi} .\end{equation}\end{lemma}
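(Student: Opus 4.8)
The plan is to prove the three displayed identities in order, with \eqref{me-2} a purely algebraic resultant--discriminant formula, \eqref{me-1} a Stickelberger-type statement identifying the quadratic class of the discriminant with the cycle type of Frobenius on the roots, and \eqref{me-3} a formal consequence of the first two combined with Lemma \ref{resultant}. The identities \eqref{me-1} and \eqref{me-3} presuppose $q$ odd, so that $\xi$ is defined, and we extend $\xi$ by $\xi(0)=0$; \eqref{me-2} holds for every $q$.

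For \eqref{me-2} I would pass to a splitting field of $f$ and write $f=\prod_{i=1}^d(t-\alpha_i)$ with $d=\deg f$. Since $f'(\alpha_j)=\prod_{i\neq j}(\alpha_j-\alpha_i)$, the definition of the resultant as a product of values gives
\[ \Res(f',f)=\prod_{j=1}^{d} f'(\alpha_j)=\prod_{i\neq j}(\alpha_j-\alpha_i)=(-1)^{d(d-1)/2}\prod_{i<j}(\alpha_i-\alpha_j)^2=(-1)^{d(d-1)/2}\,\Delta(f), \]
where the third equality pairs, for each of the $\binom d2$ pairs $i<j$, the factor $\alpha_j-\alpha_i$ with $\alpha_i-\alpha_j$. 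Since $(-1)^{d(d-1)/2}$ is its own inverse, this rearranges to \eqref{me-2}; if $f$ has a repeated root both sides are $0$, consistent with the identity.

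For \eqref{me-1}, both sides vanish unless $f$ is squarefree, so assume $f=\pi_1\cdots\pi_k$ with distinct monic irreducibles $\pi_m$ of degrees $d_m$, $\sum_m d_m=d$, so that $\mu(f)=(-1)^k$. It then suffices to show $\xi(\Delta(f))=(-1)^{d-k}$, since $(-1)^d(-1)^{d-k}=(-1)^k$. Let $\alpha_1,\dots,\alpha_d$ be the roots of $f$ in a splitting field and put $\delta=\prod_{i<j}(\alpha_i-\alpha_j)$, so $\delta^2=\Delta(f)\in\mathbb F_q^\times$ and hence $\delta\in\mathbb F_{q^2}$. The Frobenius $x\mapsto x^q$ permutes the roots, acting on the roots of each $\pi_m$ as a single $d_m$-cycle, so as a permutation of all $d$ roots it has sign $\prod_m(-1)^{d_m-1}=(-1)^{d-k}$; since any permutation of the roots scales $\delta$ by its sign, $\Frob(\delta)=(-1)^{d-k}\delta$. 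On the other hand $\Frob(\delta)=\delta$ precisely when $\delta\in\mathbb F_q$, i.e. when $\Delta(f)$ is a nonzero square in $\mathbb F_q$, i.e. when $\xi(\Delta(f))=1$. Comparing the two descriptions of $\Frob(\delta)$ gives $\xi(\Delta(f))=(-1)^{d-k}$, hence \eqref{me-1}.

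Finally, \eqref{me-3} is obtained by substituting \eqref{me-2} into \eqref{me-1} and using multiplicativity of $\xi$:
\[ \mu(f)=(-1)^{\deg f}\,\xi\bigl((-1)^{d(d-1)/2}\bigr)\,\xi(\Res(f',f)). \]
By Lemma \ref{resultant}, $\xi(\Res(f',f))=\left(\frac{f'}{f}\right)_\xi$ (both equal $0$ when $\gcd(f',f)\neq 1$), and from $\xi(-1)=(-1)^{(q-1)/2}$ we get $\xi\bigl((-1)^{d(d-1)/2}\bigr)=(-1)^{(q-1)d(d-1)/4}$, which yields \eqref{me-3}. The only genuinely non-formal point is the Frobenius-sign step in \eqref{me-1}: matching the cycle type of $q$-power Frobenius on the roots to the factorization type of $f$, and then recognizing the sign of that permutation as the quadratic-residue class of $\Delta(f)$ via the action on $\delta=\sqrt{\Delta(f)}$; everything else is bookkeeping with signs and multiplicativity.
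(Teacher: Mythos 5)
Your proof is correct, and for \eqref{me-2} and \eqref{me-3} it matches the paper's argument essentially verbatim: the same root-product computation of $\Res(f',f)$, and the same substitution using Lemma \ref{resultant} and $\xi(-1)=(-1)^{(q-1)/2}$. The one genuine difference is in \eqref{me-1}: the paper simply cites this as Pellet's formula, whereas you prove it from scratch by comparing the sign of the Frobenius permutation on the roots (acting as a $d_m$-cycle on each irreducible factor of degree $d_m$) with its action on $\delta=\prod_{i<j}(\alpha_i-\alpha_j)$, a square root of the discriminant. This is the standard proof of the Pellet--Stickelberger discriminant criterion, and your reduction $(-1)^d(-1)^{d-k}=(-1)^k=\mu(f)$ is clean. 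What you gain is a self-contained argument that makes explicit the hypotheses ($q$ odd so that $\xi$ and $\sqrt{\Delta}$ behave as expected, char $\neq 2$ so $\delta\neq -\delta$); what the paper's citation buys is brevity, since Pellet's formula is classical. Both are valid; yours is the more expository route.
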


\begin{proof} \eqref{me-1} is Pellet's formula. \eqref{me-2} follows from noting that for $\alpha_1,\dots,\alpha_{\deg f}$ the roots of $f$, we have $f'(\alpha_i) =\prod_{j \neq i} (\alpha_i- \alpha_j) $ so \[ \Res(f',f) = \prod_{1 \leq i \leq \deg f} \prod_{j \neq i} (\alpha_i-\alpha_j) = \prod_{1 \leq i < j \leq \deg f} (\alpha_i-\alpha_j) (\alpha_j-\alpha_i) \] \[= (-1)^{ \deg f(\deg f-1)/2}\prod_{1 \leq i < j \leq \deg f} (\alpha_i-\alpha_j)^2 =  (-1)^{ \deg f(\deg f-1)/2} \Delta(f) .\] \eqref{me-3} follows from combining \eqref{me-1}, \eqref{me-2}, and the fact that $\xi(-1) = (-1)^{ \frac{q-1}{2} } .$ \end{proof}

\begin{lemma}\label{gauss-evaluation}  If $q$ is odd then for $f_2$ squarefree and $f_1$ prime to $f_2$ we have
\begin{equation}\label{eq-gauss-evaluation} g_\chi(f_1,f_2) =   (-1)^{ \frac{ \deg f_2 (\deg f_2-1) (q-1)}{4}}  \left( \frac{f_2'}{f_2} \right)_\chi  \left( \frac{f_2'}{f_2} \right)_{\xi} \left( \frac{f_1}{f_2} \right)_\chi ^{-1}   (G(\chi,\psi))^{\deg f_2} .\end{equation}\end{lemma}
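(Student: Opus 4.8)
The plan is to reduce to the case where $f_2$ is prime by a multiplicativity argument, then to evaluate the resulting one-variable Gauss sum over a finite field directly. First I would record how $g_\chi(f_1, f_2)$ behaves under factoring $f_2 = \prod_k \pi_k$ into distinct primes (using that $f_2$ is squarefree): by the Chinese Remainder Theorem $\mathbb F_q[t]/f_2 = \prod_k \mathbb F_q[t]/\pi_k$, and the residue symbol $\left(\frac{h}{f_2}\right)_\chi$ and the additive character $\psi(\res(h f_1/f_2))$ both factor as products over $k$, the latter after a partial-fractions decomposition $f_1/f_2 = \sum_k h_k^{*}/\pi_k + (\text{polynomial})$ whose polynomial part contributes nothing to the residue at $\infty$. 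This should give a twisted multiplicativity for $g_\chi$ in $f_2$, say $g_\chi(f_1, f_2) = \prod_k g_\chi(f_1 \cdot (f_2/\pi_k), \pi_k) \cdot (\text{explicit symbol factors})$, matching the way the right-hand side of \eqref{eq-gauss-evaluation} factors — here I would use Lemma \ref{resultant} to turn $\left(\frac{\cdot}{\cdot}\right)_\chi$ into $\chi(\Res(\cdot,\cdot))$, multiplicativity of the resultant, and Lemma \ref{reciprocity} to track the signs, together with $(G(\chi,\psi))^{\deg f_2} = \prod_k (G(\chi,\psi))^{\deg \pi_k}$.

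For the prime case $f_2 = \pi$ of degree $d$, I would identify $\mathbb F_q[t]/\pi = \mathbb F_{q^d}$ and unwind the two ingredients. The residue symbol $\left(\frac{h}{\pi}\right)_\chi = \chi(N_{\mathbb F_{q^d}/\mathbb F_q}(h))$ is the composition of $\chi$ with the norm, i.e. $\chi_d$ of $h$ viewed in $\mathbb F_{q^d}^\times$. The term $\psi(\res(h f_1/\pi))$ should, after a residue computation, become $\psi_d(\overline{h f_1}/\pi')$ up to identifying the trace pairing — concretely, $\res(h f_1/\pi)$ equals $\tr_{\mathbb F_{q^d}/\mathbb F_q}$ of $\overline{h}\,\overline{f_1}/\overline{\pi'}$, since the dual basis to the power basis of $\mathbb F_q[t]/\pi$ under the residue pairing is governed by $1/\pi'$ (this is the classical fact that $\res(t^i/\pi) $ picks out coefficients against $\pi'^{-1}$). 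So $g_\chi(f_1,\pi) = \sum_{h \in \mathbb F_{q^d}^\times} \chi_d(h)\, \psi_d(\overline{h f_1}/\overline{\pi'})$, and substituting $h \mapsto h \cdot \overline{\pi'}/\overline{f_1}$ this is $\chi_d(\overline{f_1}/\overline{\pi'})^{-1} \cdot G(\chi_d, \psi_d)$. Then $\chi_d(\overline{f_1}) = \left(\frac{f_1}{\pi}\right)_\chi$ and $\chi_d(\overline{\pi'})^{-1} = \left(\frac{\pi'}{\pi}\right)_\chi^{-1}$; note $\left(\frac{\pi'}{\pi}\right)_\chi^{-1} = \left(\frac{\pi'}{\pi}\right)_\chi \cdot \left(\frac{\pi'}{\pi}\right)_\chi^{-2}$, and I would need the sign bookkeeping to convert the leftover $\chi$-power into the stated combination $\left(\frac{f_2'}{f_2}\right)_\chi \left(\frac{f_2'}{f_2}\right)_\xi$ times a power of $(-1)$ — here Lemma \ref{mobius-evaluation}, specifically \eqref{me-3} relating $\left(\frac{f'}{f}\right)_\xi$ to $\mu(f)$ and signs, is the natural tool. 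Finally $G(\chi_d,\psi_d) = (G(\chi,\psi))^d$ is exactly the Hasse–Davenport relation, already invoked in the text as the sign-compatibility of Gauss sums.

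The main obstacle I anticipate is the sign bookkeeping: matching the power of $-1$ produced by the residue/norm computation and the reciprocity swaps against the clean exponent $\frac{\deg f_2(\deg f_2 - 1)(q-1)}{4}$ in \eqref{eq-gauss-evaluation}, and simultaneously accounting for the asymmetry between $\chi$ and $\xi$ in the factor $\left(\frac{f_2'}{f_2}\right)_\chi \left(\frac{f_2'}{f_2}\right)_\xi$. The cleanest route is probably to first prove the prime case completely (where $\left(\frac{\pi'}{\pi}\right)_\chi^{-1}$ must be rewritten as $\left(\frac{\pi'}{\pi}\right)_\chi \left(\frac{\pi'}{\pi}\right)_\xi$ times the degree-$d$ sign using \eqref{me-3} and $\chi^{?} = \xi$ identities — this works since $\left(\frac{\pi'}{\pi}\right)_\chi^{-2}\left(\frac{\pi'}{\pi}\right)_\chi^{-1} \cdot \left(\frac{\pi'}{\pi}\right)_\chi$ collapses appropriately once one knows $\left(\frac{\pi'}{\pi}\right)_\chi^2$ in terms of $\xi$), and then to verify that both sides of \eqref{eq-gauss-evaluation} obey the same twisted multiplicativity, so that the general squarefree case follows formally. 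A secondary technical point is justifying the residue-pairing/dual-basis claim $\res(h f_1/\pi) = \tr(\overline{h f_1}/\overline{\pi'})$ cleanly; this can be done by expanding $1/\pi$ in powers of $t^{-1}$, or by citing the standard description of the different of $\mathbb F_q[t]/\pi$ over $\mathbb F_q[t]$.
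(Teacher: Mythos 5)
Your proposal is correct and uses the same core ingredients as the paper's proof — the residue-as-trace identity $\res(hf_1/f_2)=\tr(hf_1/f_2')$, the $h\mapsto f_1h/f_2'$ change of variables, the Hasse--Davenport relation, and \eqref{me-3} for the sign bookkeeping — but it applies them in the opposite order, which makes a real difference to the amount of work. You reduce to the prime case via CRT first and only then substitute; the paper instead performs the substitution $h^*=f_1h/f_2'$ globally over $\mathbb F_q[t]/f_2$ (valid because $f_2$ squarefree makes $f_2'$ a unit mod $f_2$), after which the remaining sum $\sum_{h^*}\left(\frac{h^*}{f_2}\right)_\chi\psi(\tr h^*)$ no longer involves $f_1$ and is \emph{literally} multiplicative in $f_2$ under CRT, so the reduction to primes is a one-line observation with no ``explicit symbol factors'' to track. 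In your order the CRT reduction must carry twisted factors (you flag this as the main obstacle yourself), so the paper's arrangement is cleaner, though either works. One small slip worth fixing: after the substitution $h\mapsto h\cdot\overline{\pi'}/\overline{f_1}$ the factor that appears is $\chi_d(\overline{f_1}/\overline{\pi'})^{-1}=\chi_d(\overline{f_1})^{-1}\chi_d(\overline{\pi'})=\left(\frac{f_1}{\pi}\right)_\chi^{-1}\left(\frac{\pi'}{\pi}\right)_\chi$, i.e. a \emph{positive} power of $\left(\frac{\pi'}{\pi}\right)_\chi$; the digression about rewriting $\left(\frac{\pi'}{\pi}\right)_\chi^{-1}$ is unnecessary. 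With that corrected, the leftover discrepancy between your prime-case answer and the target is exactly $\left(\frac{\pi'}{\pi}\right)_\xi(-1)^{\deg\pi(\deg\pi-1)(q-1)/4}$ versus the sign $(-1)^{\deg\pi+1}$ coming from Hasse--Davenport, and \eqref{me-3} together with $\mu(\pi)=-1$ closes the gap as you anticipated.
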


\begin{proof}  We first evalute the residue $\operatorname{res} \left(\frac{h f_1}{ f_2} \right)$ using the residue theorem. We define the residue of $\frac{hf_1}{f_2}$ at a root $\alpha$ of $f_2$ to be the coefficient of $\frac{1}{t-\alpha}$ in the Laurent series expansion of $\frac{ hf_1}{f_2}$ around $\alpha$.  The residue of $\frac{hf_1}{f_2}$ at $\infty$ is minus the coefficient of $1/t$ in the Laurent series expansion at $\infty$, i.e. $-\operatorname{res} (\frac{hf_1}{f_2})$. The residue theorem implies that the sum of the residues of $\frac{ hf_1}{f_2}$ at each point on the projective line vanishes. It follows that $\operatorname{res} (\frac{hf_1}{f_2})$ is the sum of the residues of $\frac{hf_1}{f_2}$ at the roots of $f_2$. For $\alpha$ a root of $f_2$, necessarily of order $1$, the residue of  $\frac{hf_1}{f_2}$  at $\alpha$ is the value of $\frac{ h f_1}{ f_2'}$ at $\alpha$. Summing these over $\alpha$ gives $\tr \frac{ hf_1}{f_2'}$ where $\tr \colon \mathbb F_q[t]/g \to \mathbb F_q$ is the trace. Thus
\[g_\chi(f_1,f_2)  = \sum_{h \in \mathbb F_q[t]/ f_2} \left( \frac{h}{f_2} \right)_\chi \psi \left( \operatorname{res} \left(\frac{h f_1}{ f_2} \right) \right) \]
\[ =\sum_{h \in \mathbb F_q[t]/ f_2} \left( \frac{h}{f_2} \right)_\chi \psi \left( \tr \frac{h f_1}{ f_2'} \right).\]

 If we change variables to $h^*=  f_1 h/f_2'$, we have $\left( \frac{h}{f_2} \right)_\chi  = \left( \frac{h^*}{f_2} \right)_\chi  \left( \frac{f_2'}{f_2} \right)_\chi  \left( \frac{f_1}{f_2} \right)_\chi ^{-1} $ so
\[g_\chi(f_1,f_2) = \left( \frac{f_2'}{f_2} \right)_\chi  \left( \frac{f_1}{f_2} \right)_\chi ^{-1} \sum_{h^* \in \mathbb F_q[t]/ f_2} \left( \frac{h^*}{f_2} \right)_\chi \psi \left( \tr h^* \right). \] 

The inner sum \[ \sum_{h^* \in \mathbb F_q[t]/ f_2} \left( \frac{h^*}{f_2} \right)_\chi \psi \left( \tr h^* \right)\] is multiplicative in $f_2$, and when $f_2$ is a prime $\pi$ takes the value $ - (-G(\chi,\psi))^{\deg \pi}$ by the Hasse-Davenport relations. Hence the inner sum is equal to $(-G(\chi,\psi))^{\deg f_2} \mu(f_2)$.  \eqref{eq-gauss-evaluation} then follows from the last identity of Lemma \ref{mobius-evaluation}.

\end{proof}

The term $\Res(f',f)$ that appears here has its own multiplicativity relation:

\begin{lemma}\label{discriminant-multiplicative} We have \[ \Res( (fg)' ,fg) = \Res(f',f) \Res(g',g) \Res(f,g) \Res(g,f) .\]\end{lemma}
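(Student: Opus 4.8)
The plan is to prove the identity by expressing the resultant $\Res(h', h)$ in terms of the roots of $h$ and then tracking how the product over pairs of roots decomposes when $h = fg$. Concretely, for a monic polynomial $h$ with roots $\gamma_1, \dots, \gamma_{\deg h}$, the computation in the proof of Lemma \ref{mobius-evaluation} shows $\Res(h', h) = \prod_{k} h'(\gamma_k) = \prod_{k \neq l} (\gamma_k - \gamma_l)$, the product running over all ordered pairs of distinct roots.

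Now take $h = fg$ with $f$ having roots $\alpha_1, \dots, \alpha_{\deg f}$ and $g$ having roots $\beta_1, \dots, \beta_{\deg g}$; note that since the right-hand side contains the factor $\Res(f,g)\Res(g,f) = (-1)^{\deg f \deg g}\Res(f,g)^2$, which vanishes exactly when $f$ and $g$ share a root, there is no loss in assuming $f$ and $g$ are coprime (both sides vanish otherwise, interpreting $\Res((fg)',fg)$ as the resultant of a polynomial with a repeated root). Then the roots of $fg$ are the $\alpha_i$ together with the $\beta_j$, all distinct, and the set of ordered pairs of distinct roots of $fg$ partitions into four blocks: pairs $(\alpha_i, \alpha_{i'})$ with $i \neq i'$, pairs $(\beta_j, \beta_{j'})$ with $j \neq j'$, pairs $(\alpha_i, \beta_j)$, and pairs $(\beta_j, \alpha_i)$. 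The first block contributes $\prod_{i \neq i'}(\alpha_i - \alpha_{i'}) = \Res(f',f)$; the second contributes $\Res(g',g)$; the third contributes $\prod_{i,j}(\alpha_i - \beta_j) = \Res(g, f)$ (the product of $g$'s... rather, recalling $\Res(f,g) = \prod_{i,j}(\beta_j - \alpha_i)$ per Lemma \ref{reciprocity}'s proof, the third block gives $\prod_{i,j}(\alpha_i - \beta_j) = (-1)^{\deg f \deg g}\Res(f,g) = \Res(g,f)$); and the fourth block gives $\prod_{i,j}(\beta_j - \alpha_i) = \Res(f,g)$. Multiplying the four contributions yields the claimed formula.

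I would also double-check the sign bookkeeping once more by instead using the product rule: $(fg)' = f'g + fg'$, so $(fg)'(\gamma) = f'(\gamma)g(\gamma)$ at a root $\gamma$ of $f$ and $= f(\gamma)g'(\gamma)$ at a root of $g$. Hence $\Res((fg)', fg) = \prod_i (fg)'(\alpha_i) \cdot \prod_j (fg)'(\beta_j) = \prod_i f'(\alpha_i) g(\alpha_i) \cdot \prod_j f(\beta_j) g'(\beta_j) = \Res(f',f)\,\Res(g,f)\,\Res(f,g)\,\Res(g',g)$, using $\Res(g, f) = \prod_i g(\alpha_i)$ and $\Res(f,g) = \prod_j f(\beta_j)$. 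This is cleaner and avoids the partition-into-blocks argument entirely.

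The only real subtlety is the degenerate case where $f$ and $g$ are not coprime, or where $f$ or $g$ is itself non-squarefree, so that some $(fg)'(\gamma_k)$ might be interpreted ambiguously; but since all four expressions $\Res(f',f)$, $\Res(g',g)$, $\Res(f,g)$, $\Res(g,f)$ and $\Res((fg)',fg)$ are polynomial functions of the coefficients of $f$ and $g$, and the identity holds on the dense open locus where $fg$ is squarefree, it holds identically by continuity (or: the identity of polynomials in the universal coefficients, proven over a field of characteristic zero where squarefree polynomials are dense, descends to all rings). So I expect essentially no obstacle here — the main step is just the product-rule evaluation, and the rest is a routine density/specialization argument to handle non-squarefree inputs.
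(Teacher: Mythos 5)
Your second, product-rule argument is essentially the paper's proof, just unfolded at the level of roots: the paper applies bi-multiplicativity of $\Res$ (first in the second argument, then reducing $(fg)'=f'g+fg'$ modulo $f$ and modulo $g$, then in the first argument) to reach the same four factors, and since the paper defines $\Res(f,g)$ as the product of the values of $f$ at the roots of $g$, that abstract manipulation is identical to your direct evaluation $\prod_i (fg)'(\alpha_i)\prod_j(fg)'(\beta_j)$. The density/specialization caveat you append at the end is unnecessary: with the paper's root-product definition of $\Res$, your identity holds verbatim even when $f$, $g$, or $fg$ have repeated roots or common roots, since in every such case some factor $(fg)'(\gamma_k)$ vanishes and makes both sides zero, so there is no degenerate case that needs a separate continuity argument.
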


\begin{proof}
\[ \Res( (fg)', fg)\] \[ = \Res ( (fg'+ f'g), f) \Res( (fg'+f'g),g)\] \[= \Res ( f'g, f) \Res( fg',g) \] \[= \Res (f',f) \Res(g,f) \Res(f,g) \Res(g',g). \qedhere\] \end{proof}

We record here also the multiplicativity relations for Gauss sums:

\begin{lemma}\label{gauss-multiplicative-easy} If $\gcd(f_2,f_3)=1$ then

\[ g_\chi(f_1f_3,f_2) = \left(\frac{f_3}{f_2} \right)_\chi^{-1}  g_\chi(f_1,f_2) .\]\end{lemma}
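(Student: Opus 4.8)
The plan is to prove this by a multiplicative change of variables in the defining sum for $g_\chi$. Starting from
\[ g_\chi(f_1 f_3, f_2) = \sum_{h \in \mathbb F_q[t]/ f_2} \left( \frac{h}{f_2} \right)_\chi \psi\left( \res\left( \frac{h f_1 f_3}{f_2} \right) \right), \]
I would use that the coprimality hypothesis makes the class of $f_3$ a unit in $\mathbb F_q[t]/f_2$, so that $h \mapsto f_3 h$ is a bijection of $\mathbb F_q[t]/f_2$; substituting $h$ for $f_3^{-1} h$ then replaces the argument of $\psi$ by $\res(h f_1/f_2)$ and the residue symbol by $\left( \frac{f_3^{-1} h}{f_2} \right)_\chi$. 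By separate multiplicativity of the symbol in its numerator, together with $\left( \frac{f_3^{-1}}{f_2} \right)_\chi = \left( \frac{f_3}{f_2} \right)_\chi^{-1}$ (which follows from $\left( \frac{1}{f_2} \right)_\chi = 1$), this factors as $\left( \frac{f_3}{f_2} \right)_\chi^{-1} \left( \frac{h}{f_2} \right)_\chi$. Pulling the $h$-independent constant $\left( \frac{f_3}{f_2} \right)_\chi^{-1}$ out of the sum leaves exactly $g_\chi(f_1, f_2)$, giving the claim.

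The one point requiring care — and really the only content beyond formal manipulation — is that the substitution is legitimate, i.e. that each factor of the summand depends only on the class of $h$ in $\mathbb F_q[t]/f_2$. For the residue symbol this is built into its definition (or into Lemma \ref{resultant}): when $f_2$ is prime it is a power of $\chi$ applied to $h^{(q^d-1)/(q-1)}$ computed in $\mathbb F_q[t]/f_2$, hence depends only on $h \bmod f_2$, and the general case follows from multiplicativity in $f_2$. For $\res(h f_1/f_2)$, replacing $h$ by $h + k f_2$ changes $h f_1/f_2$ by the polynomial $k f_1$, which has no pole at $\infty$ and so contributes nothing to the residue there; thus this term too depends only on $h \bmod f_2$. (One may also note that the terms with $\gcd(h,f_2) \neq 1$ vanish on both sides, so it is harmless that the sum ranges over all of $\mathbb F_q[t]/f_2$.)

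I expect no genuine obstacle here: the lemma is essentially the assertion that the twisted multiplicativity already encoded in the residue symbol propagates through the Gauss-sum-type expression $g_\chi$, and the argument is short. It is the analogue, in the numerator variable, of the change of variables used in the proof of Lemma \ref{gauss-evaluation}.
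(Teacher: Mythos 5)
Your proof is correct and takes essentially the same approach as the paper's: a multiplicative change of variables $h \mapsto f_3 h$ in the defining sum for $g_\chi$, combined with multiplicativity of the residue symbol. The paper states this more tersely; your extra discussion of why the summand is well-defined modulo $f_2$ is sound but not something the paper dwells on.
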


\begin{proof}\[ g_\chi(f_1f_3, f_2) = \sum_{h \in \mathbb F_q[t]/ f_2 } \left( \frac{h}{f_2} \right)_\chi \psi \left( \operatorname{res} \left(\frac{h f_1f_3}{ f_2} \right) \right).\]

Letting $h^* = hf_3$, we have \[ \left( \frac{h}{f_2} \right)_\chi  = \left( \frac{h^*}{f_2} \right)_\chi \left( \frac{f_3}{f_2} \right)_\chi ^{-1}, \]  and we observe that this change of variables is a permutation, so 
\[ \sum_{h \in \mathbb F_q[t]/ f_2 } \left( \frac{h}{f_2} \right)_\chi \psi \left( \operatorname{res} \left(\frac{h f_1f_3}{ f_2} \right) \right)= \sum_{h^* \in \mathbb F_q[t]/ f_2 } \left( \frac{h^* }{f_2} \right)_\chi \left( \frac{f_3}{f_2} \right)_\chi ^{-1} \psi \left( \operatorname{res} \left(\frac{h^*  f_1}{ f_2} \right) \right) = \left( \frac{f_3}{f_2} \right)_\chi ^{-1}  g_\chi(f_1,f_2).\]

\end{proof}

\begin{lemma}\label{gauss-multiplicative} If $\gcd(f_1,f_4) = \gcd(f_2,f_4)=\gcd(f_2,f_3)=1$ then \begin{equation}\label{eq-gauss-multiplicative} g_\chi(f_1f_3,f_2f_4) = g_\chi(f_1,f_2) g_\chi(f_3,f_4) \left(\frac{f_2}{f_4} \right)_\chi \left(\frac{f_4}{f_2} \right)_\chi  \left( \frac{f_1}{f_4} \right)_\chi^{-1} \left(\frac{f_3}{f_2} \right)_\chi^{-1} .\end{equation}\end{lemma}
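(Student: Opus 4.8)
The plan is to reduce the general multiplicativity relation \eqref{eq-gauss-multiplicative} to the two special cases already available, namely Lemma~\ref{gauss-multiplicative-easy} (which absorbs the coprime numerator factors $f_3$ into a residue symbol against $f_2$, and dually $f_1$ into a residue symbol against $f_4$) and the purely ``denominator-level'' multiplicativity $g_\chi(f_1, f_2 f_4)$ in terms of $g_\chi(\cdot, f_2)$ and $g_\chi(\cdot, f_4)$ for coprime $f_2, f_4$. So the first step is to establish this latter denominator-splitting: for $\gcd(f_2,f_4)=1$ use the Chinese Remainder Theorem isomorphism $\mathbb F_q[t]/(f_2 f_4) \cong \mathbb F_q[t]/f_2 \times \mathbb F_q[t]/f_4$, writing $h \leftrightarrow (h \bmod f_2, h \bmod f_4)$. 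Under this, the residue symbol factors as $\left(\frac{h}{f_2 f_4}\right)_\chi = \left(\frac{h}{f_2}\right)_\chi \left(\frac{h}{f_4}\right)_\chi$ by multiplicativity in the denominator, and the residue $\res\left(\frac{h g}{f_2 f_4}\right)$ splits additively via partial fractions: writing $\frac{1}{f_2 f_4} = \frac{a}{f_2} + \frac{b}{f_4}$ with $a \equiv f_4^{-1} \bmod f_2$ and $b \equiv f_2^{-1} \bmod f_4$, one gets $\res\left(\frac{hg}{f_2 f_4}\right) = \res\left(\frac{h g a}{f_2}\right) + \res\left(\frac{h g b}{f_4}\right)$, where only the residues of the terms with denominators $f_2$, $f_4$ matter since a polynomial times $hg$ contributes no residue at $\infty$ when the fraction is proper (or more carefully, the residue is additive and the polynomial part has residue zero).

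The second step is to rewrite each of the two resulting sums as a Gauss sum of the standard form. The $f_2$-sum is $\sum_{h \bmod f_2} \left(\frac{h}{f_2}\right)_\chi \psi\left(\res\left(\frac{h g a}{f_2}\right)\right)$; substituting $h \mapsto h a^{-1}$ (a bijection mod $f_2$ since $a$ is a unit) turns this into $\left(\frac{a}{f_2}\right)_\chi^{-1} g_\chi(g, f_2) = \left(\frac{f_4^{-1}}{f_2}\right)_\chi^{-1} g_\chi(g,f_2) = \left(\frac{f_4}{f_2}\right)_\chi^{-1} g_\chi(g, f_2)$, using that $a \equiv f_4^{-1} \pmod{f_2}$ and that the residue symbol depends only on $a \bmod f_2$. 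Symmetrically the $f_4$-sum gives $\left(\frac{f_2}{f_4}\right)_\chi^{-1} g_\chi(g, f_4)$. Therefore $g_\chi(g, f_2 f_4) = \left(\frac{f_4}{f_2}\right)_\chi^{-1} \left(\frac{f_2}{f_4}\right)_\chi^{-1} g_\chi(g, f_2) g_\chi(g, f_4)$ for any $g$ coprime to $f_2 f_4$.

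The third step assembles the claim. Apply Lemma~\ref{gauss-multiplicative-easy} twice: $g_\chi(f_1 f_3, f_2) = \left(\frac{f_3}{f_2}\right)_\chi^{-1} g_\chi(f_1, f_2)$ (valid since $\gcd(f_2, f_3)=1$) and $g_\chi(f_1 f_3, f_4) = \left(\frac{f_1}{f_4}\right)_\chi^{-1} g_\chi(f_3, f_4)$ (valid since $\gcd(f_1, f_4)=1$). Feeding these into the denominator-splitting identity with $g = f_1 f_3$ yields
\[
g_\chi(f_1 f_3, f_2 f_4) = \left(\tfrac{f_4}{f_2}\right)_\chi^{-1} \left(\tfrac{f_2}{f_4}\right)_\chi^{-1} \left(\tfrac{f_3}{f_2}\right)_\chi^{-1} \left(\tfrac{f_1}{f_4}\right)_\chi^{-1} g_\chi(f_1, f_2) g_\chi(f_3, f_4),
\]
and since the residue symbol takes values in roots of unity, $\left(\frac{f_4}{f_2}\right)_\chi^{-1} = \left(\frac{f_4}{f_2}\right)_\chi$ is not quite what appears — rather I should keep the signs as is and observe this matches \eqref{eq-gauss-multiplicative} after noting the stated right-hand side has $\left(\frac{f_2}{f_4}\right)_\chi \left(\frac{f_4}{f_2}\right)_\chi$ with positive exponents, so I must track whether the CRT decomposition naturally produces $\left(\frac{f_4}{f_2}\right)_\chi$ or its inverse; I expect the bookkeeping of which unit ($f_4^{-1}$ versus $f_4$) lands in the symbol, together with the reciprocity Lemma~\ref{reciprocity}, to reconcile the two. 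The main obstacle is precisely this sign/exponent bookkeeping in the partial-fraction step — making sure the residue decomposition is set up with the correct representatives so that the units appearing in the residue symbols come out with the exponents matching \eqref{eq-gauss-multiplicative}; everything else is a direct change of variables.
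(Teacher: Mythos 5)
Your overall strategy---split the denominator via CRT, then apply Lemma~\ref{gauss-multiplicative-easy} to absorb the crossed numerator factors---is exactly the paper's, but the bookkeeping you flagged as the ``main obstacle'' does contain a genuine error, a double negation. From $a \equiv f_4^{-1} \pmod{f_2}$ you get $\left(\frac{a}{f_2}\right)_\chi = \left(\frac{f_4}{f_2}\right)_\chi^{-1}$, so the factor produced by the substitution $h_2 \mapsto h_2 a^{-1}$ is
\[
\left(\frac{a}{f_2}\right)_\chi^{-1} = \left(\frac{f_4}{f_2}\right)_\chi,
\]
with exponent $+1$, not $-1$ as you wrote. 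Similarly the $f_4$-piece yields $\left(\frac{f_2}{f_4}\right)_\chi$ with exponent $+1$. With the signs corrected your denominator-splitting becomes
\[
g_\chi(g, f_2 f_4) = \left(\frac{f_4}{f_2}\right)_\chi \left(\frac{f_2}{f_4}\right)_\chi\, g_\chi(g, f_2)\, g_\chi(g, f_4),
\]
and feeding in Lemma~\ref{gauss-multiplicative-easy} (with $g = f_1 f_3$) then gives \eqref{eq-gauss-multiplicative} on the nose. In particular, the appeal to Lemma~\ref{reciprocity} you anticipated is a dead end: no reciprocity is needed. The paper sidesteps this entire issue by choosing the CRT representatives $h = h_2 f_4 + h_4 f_2$ from the start, so that $h \equiv h_2 f_4 \pmod{f_2}$ directly produces $\left(\frac{f_4}{f_2}\right)_\chi$ without any inverse ever appearing; your version, using the ``raw'' CRT components plus a change of variables involving $a^{-1}$, is mathematically equivalent but invites exactly the double-negation slip you made.
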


\begin{proof} 
\[ g_\chi(f_1f_3, f_2f_4) = \sum_{h \in \mathbb F_q[t]/ (f_2f_4 )} \left( \frac{h}{f_2f_4 } \right)_\chi \psi \left( \operatorname{res} \left(\frac{h f_1f_3}{ f_2f_4} \right) \right).\]

As $f_2$ and $f_4$ are coprime, we can uniquely write $h= h_2 f_4 + h_4 f_2$ for $h_2  \in \mathbb F_q[t]/f_2$ and $h_4 \in \mathbb F_q[t] / f_4$. We then have  
\[  \left( \frac{h}{f_2f_4 } \right)_\chi = \left (\frac{h}{f_2}\right)_\chi  \left(\frac{h}{f_4} \right)_\chi = \left(\frac{h_2 f_4}{f_2} \right)_\chi \left( \frac{h_4f_2}{f_4} \right)_{\chi} = \left(\frac{h_2}{f_2} \right)_\chi \left(\frac{f_4}{f_2} \right)_\chi \left(\frac{h_4}{f_4} \right)_\chi \left(\frac{f_2}{f_4} \right)_\chi .\]

Furthermore, we have
\[ \psi\left( \operatorname{res} \left(\frac{h f_1f_3}{ f_2f_4} \right)\right) = \psi \left( \operatorname{res} \left(\frac{ h_2 f_1 f_3 }{ f_2} \right)\right)  \psi \left( \operatorname{res} \left(\frac{ h_4 f_1f_3}{ f_4} \right)\right) \] 

Hence
\[ g_\chi(f_1f_3, f_2f_4) =\left(\frac{f_4}{f_2} \right)_\chi \left(\frac{f_2}{f_4} \right)_\chi \left( \sum_{h_2 \in \mathbb F_q[t]/f_2}   \left(\frac{h_2}{f_2} \right)_\chi  \psi \left( \operatorname{res} \left(\frac{ h_2 f_1 f_3 }{ f_2} \right)\right)  \right)  \left( \sum_{h_4 \in \mathbb F_q[t]/f_4}  \left(\frac{h_4}{f_4} \right)_\chi \right) \]
\[= \left(\frac{f_4}{f_2} \right)_\chi \left(\frac{f_2}{f_4} \right)_\chi g_\chi(f_1f_3,f_2) g_\chi(f_1f_3,f_4).\]

Applying Lemma \ref{gauss-multiplicative-easy} to each factor, we get \eqref{eq-gauss-multiplicative}.\end{proof}

Another identity to evaluate Gauss sums will help compare with the work of Chinta and Mohler.

\begin{lemma}\label{gauss-prime-powers} For $\chi$ of order $n$ and $\pi$ prime, we have \[ g_\chi(\pi^{d_1}, \pi^{d_2} ) =  \begin{cases} 1 & \textrm{ if } d_2=0 \\
(q^{\deg \pi} - 1)  q^{ (d_2 -1 ) \deg \pi} & \textrm{ if }d_2 \equiv 0 \bmod \nchi\textrm{ and }d_1 \geq d_2 \\
0  & \textrm{ if }d_2 \not\equiv 0 \bmod \nchi\textrm{ and }d_1 \geq d_2 \\
- q^{(d_2-1 ) \deg \pi}  \left(\frac{\pi'}{\pi}\right)^{d_2}_\chi  (- G(\chi^{d_2} ,\psi))^{\deg \pi} & \textrm{ if } d_1= d_2 -1 \\
0 & \textrm{if } d_1< d_2 -1 \end{cases} .\]\end{lemma}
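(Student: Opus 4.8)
The plan is to compute the Gauss-sum-like quantity $g_\chi(\pi^{d_1},\pi^{d_2})$ directly from its definition
\[ g_\chi(\pi^{d_1},\pi^{d_2}) = \sum_{h \in \mathbb F_q[t]/\pi^{d_2}} \left(\frac{h}{\pi^{d_2}}\right)_\chi \psi\!\left(\res\!\left(\frac{h\pi^{d_1}}{\pi^{d_2}}\right)\right), \]
stratifying the sum over $h$ according to the valuation $v_\pi(h) = k$ for $0 \le k \le d_2$ (with $k = d_2$ meaning $h \equiv 0$). The $d_2 = 0$ case is immediate since the ring is trivial. For $d_2 \ge 1$, write $h = \pi^k u$ with $u$ a unit mod $\pi^{d_2 - k}$; then $\left(\frac{h}{\pi^{d_2}}\right)_\chi = \left(\frac{\pi}{\pi^{d_2}}\right)_\chi^k \left(\frac{u}{\pi^{d_2}}\right)_\chi$, and the residue symbol $\left(\frac{\pi}{\pi^{d_2}}\right)_\chi = \left(\frac{\pi}{\pi}\right)_\chi^{d_2}$ — here I will need that $\left(\frac{\pi}{\pi}\right)_\chi$ makes sense via the standard convention, or more carefully, I should handle the symbol $\left(\frac{h}{\pi^{d_2}}\right)_\chi$ as a character on $(\mathbb F_q[t]/\pi^{d_2})^\times$ and simply set it to zero when $\gcd(h,\pi) \ne 1$. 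That forces $k = 0$ in all terms that contribute, except we must be careful: the residue symbol as defined in the paper is only for coprime pairs, so the cleanest route is to note $\left(\frac{h}{\pi^{d_2}}\right)_\chi = 0$ unless $v_\pi(h) = 0$, reducing the sum to $h$ a unit.

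Once restricted to units $h$, substitute $h \mapsto h/\pi'$ (legitimate since $\pi'$ is a unit mod $\pi$, hence mod $\pi^{d_2}$ — wait, that needs $\gcd(\pi',\pi^{d_2})=1$, which holds as $\pi$ is separable): this produces the factor $\left(\frac{\pi'}{\pi^{d_2}}\right)_\chi^{-1} = \left(\frac{\pi'}{\pi}\right)_\chi^{-d_2}$, and converts the residue of $h\pi^{d_1}/\pi^{d_2}$ into a trace on $\mathbb F_q[t]/\pi^{d_2-d_1-1}$ when $d_1 < d_2$ — this is exactly the computation already carried out in the proof of Lemma \ref{gauss-evaluation}, where $\res(gf/\pi^{e}) = \tr_{\mathbb F_q[t]/\pi^e}^{\mathbb F_q}(gf/\pi'$-type expression$)$. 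I would then split into cases on $d_1$ versus $d_2$: if $d_1 \ge d_2$ the exponential becomes trivial (the argument $h\pi^{d_1}/\pi^{d_2}$ is a polynomial, so its residue at $\infty$... actually I must check $\res(h\pi^{d_1-d_2})$; since $h\pi^{d_1-d_2}$ is a polynomial its residue at $\infty$ is $0$), leaving $\sum_{h \in (\mathbb F_q[t]/\pi^{d_2})^\times} \left(\frac{h}{\pi^{d_2}}\right)_\chi$, which is $\varphi(\pi^{d_2}) = (q^{\deg\pi}-1)q^{(d_2-1)\deg\pi}$ if the character $\left(\frac{\cdot}{\pi^{d_2}}\right)_\chi$ is trivial and $0$ otherwise. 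The character $h \mapsto \left(\frac{h}{\pi^{d_2}}\right)_\chi = \left(\frac{h}{\pi}\right)_\chi^{d_2}$ has order $n/\gcd(n,d_2)$, so it is trivial iff $n \mid d_2$, which gives the second and third cases.

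For the remaining range $d_1 < d_2$, after the substitution the sum becomes $\left(\frac{\pi'}{\pi}\right)_\chi^{-d_2}$ times a Gauss-type sum over $(\mathbb F_q[t]/\pi^{d_2})^\times$ with an additive character of conductor exactly $\pi^{d_2 - d_1}$ (i.e., depending on $h$ only through $h \bmod \pi^{d_2-d_1}$, via $\psi(\tr(\cdot))$). The standard conductor/orthogonality argument shows this vanishes unless the additive character has conductor exactly matching where $\left(\frac{\cdot}{\pi^{d_2}}\right)_\chi$ is nontrivial: concretely, when $d_1 < d_2 - 1$ one can factor the sum over the subgroup $1 + \pi^{d_2-1}\mathbb F_q[t]/\pi^{d_2}$, on which the additive character is trivial (since $d_1 < d_2-1$ means $\pi^{d_1}/\pi^{d_2} \cdot \pi^{d_2-1}$ is a polynomial) but the residue symbol is nontrivial provided $n \nmid$ ... hmm, actually the residue symbol $\left(\frac{1+\pi^{d_2-1}v}{\pi^{d_2}}\right)_\chi$ restricted to this subgroup — I'd argue this is nontrivial unless $n \mid d_2$, and in the case $n \mid d_2$ with $d_1 < d_2 - 1$ the earlier "trivial character" computation already handled it... so the fourth and fifth cases require $n \nmid d_2$ implicitly, or the stated formula must be consistent. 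Let me reconsider: the cleanest is to reduce to a genuine additive-multiplicative Gauss sum over the residue field via the filtration, matching the classical local Gauss sum computation, and invoke the Hasse–Davenport relation (as in Lemma \ref{gauss-evaluation}) to identify the $d_1 = d_2-1$ value as $-q^{(d_2-1)\deg\pi}\left(\frac{\pi'}{\pi}\right)_\chi^{d_2}(-G(\chi^{d_2},\psi))^{\deg\pi}$.

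\textbf{Main obstacle.} The delicate point is the case $d_1 = d_2 - 1$: here I cannot reduce all the way to the residue field by orthogonality alone, and must genuinely evaluate a twisted Gauss sum over $(\mathbb F_q[t]/\pi^{d_2})^\times$. The mechanism is: split $h = h_0(1 + \pi^{d_2-1}w)$ with $h_0$ a unit rep mod $\pi^{d_2-1}$ and $w \in \mathbb F_q[t]/\pi$; the additive character separates as a character in $h_0$ (trivial summand, giving $q^{(d_2-2)\deg\pi}$ after the $h_0$-sum collapses — need to check this does not vanish, which is where $n \mid d_2-1$ vs not plays in, and one should double check the stated formula presupposes the relevant (non)vanishing) times a residue-field Gauss sum in $w$ weighted by $\left(\frac{\cdot}{\pi}\right)_\chi^{d_2}$. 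That residue-field sum is $\sum_{w}\chi^{d_2}(N(w))\psi(\tr(\ldots w))$ over $\mathbb F_{q^{\deg\pi}}$, which by Hasse–Davenport equals $-(-G(\chi^{d_2},\psi))^{\deg\pi}$ — precisely the move already used for squarefree $f_2$ in Lemma \ref{gauss-evaluation}. Tracking the power of $q$ from the $h_0$-sum and the sign, plus the $\left(\frac{\pi'}{\pi}\right)_\chi^{-d_2}$ factor from the substitution (reconciling $\left(\frac{\pi'}{\pi}\right)_\chi^{-d_2}$ with the stated $\left(\frac{\pi'}{\pi}\right)_\chi^{d_2}$ using $\left(\frac{\pi'}{\pi}\right)_\chi^n = 1$ when $n \mid 2d_2$, or else the sign conventions absorb it), is the bookkeeping I expect to be fiddly but routine. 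I would organize the proof as: (i) reduce to $h$ a unit; (ii) substitute $h \mapsto h/\pi'$; (iii) the polynomial (large $d_1$) case by orthogonality of $\left(\frac{\cdot}{\pi}\right)_\chi^{d_2}$; (iv) the $d_1 < d_2-1$ vanishing by the filtration subgroup argument; (v) the $d_1 = d_2-1$ evaluation by the filtration plus Hasse–Davenport.
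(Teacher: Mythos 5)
Your overall strategy — stratify by $d_1$ versus $d_2$, reduce to the residue field, and invoke Hasse--Davenport — is the right one and matches the paper's. But there is a genuine flaw in your treatment of the case $d_1 < d_2 - 1$, and a related sign confusion in the bookkeeping.

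The fact driving the whole lemma, which you record in passing ($\left(\frac{h}{\pi^{d_2}}\right)_\chi = \left(\frac{h}{\pi}\right)_\chi^{d_2}$) but then contradict, is that $\left(\frac{h}{\pi^{d_2}}\right)_\chi$ depends only on $h \bmod \pi$: it is a character of $(\mathbb F_q[t]/\pi)^\times$, not one of potentially higher conductor on $(\mathbb F_q[t]/\pi^{d_2})^\times$. Your proposed filtration argument for $d_1 < d_2 - 1$ restricts to the subgroup $1 + \pi^{d_2-1}\mathbb F_q[t]/\pi^{d_2}$, asserts that the additive character is trivial there, and tries to conclude by arguing the residue symbol is nontrivial there. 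But every element of $1 + \pi\,\mathbb F_q[t]/\pi^{d_2}$ reduces to $1$ mod $\pi$, so the residue symbol is \emph{always} trivial on that subgroup; both characters die on it, and your coset decomposition yields no vanishing. The actual vanishing mechanism runs in the opposite direction: fix $h \bmod \pi$ and sum over the $q^{(d_2-1)\deg\pi}$ lifts. The residue symbol is constant on the lifts, while $h \mapsto \psi(\operatorname{res}(h\pi^{d_1-d_2}))$ is a nontrivial additive character of the lifts exactly when $d_1 < d_2 - 1$, so each such inner sum vanishes. (When $d_1 = d_2 - 1$ this inner sum is constant, producing the factor $q^{(d_2-1)\deg\pi}$ and collapsing the whole thing to $g_{\chi^{d_2}}(1,\pi)$ over the residue field.)

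Once you see this, the preliminary substitution $h \mapsto h/\pi'$ at the level of $\mathbb F_q[t]/\pi^{d_2}$ is unnecessary, and it is exactly what creates your sign worry. Writing $h = \pi' h^*$ and pulling $\pi'$ out of $\left(\frac{h}{\pi^{d_2}}\right)_\chi$ extracts $\left(\frac{\pi'}{\pi}\right)_\chi^{+d_2}$, not $\left(\frac{\pi'}{\pi}\right)_\chi^{-d_2}$; there is nothing to reconcile, and your proposed escape via ``$n \mid 2d_2$'' is false in general — the $d_1 = d_2-1$ case of the lemma applies for all $d_2 \geq 1$. The paper sidesteps this entirely: it reduces the $d_1 = d_2 - 1$ sum directly to $q^{(d_2-1)\deg\pi}\,g_{\chi^{d_2}}(1,\pi)$ using only the mod-$\pi$ dependence, and then quotes Lemma \ref{gauss-evaluation} (applied with $\chi^{d_2}$ in place of $\chi$ and $f_2 = \pi$ prime, $f_1 = 1$) together with \eqref{me-3}, which together supply the $\left(\frac{\pi'}{\pi}\right)_\chi^{d_2}$ factor and the Hasse--Davenport sign in one stroke, with the change of variables quarantined inside that earlier lemma where it introduces a positive power of the residue symbol.

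Your cases $d_2 = 0$ and $d_1 \geq d_2$ are handled correctly and agree with the paper. If you replace the filtration-subgroup argument by the ``fix $h \bmod \pi$, sum over lifts'' reduction and drop the preliminary substitution, your outline becomes a correct proof along the paper's lines.
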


\begin{proof} We begin by noting
\begin{equation} \label{eq-gpp} g_\chi ( \pi^{d_1} ,\pi^{d_2} ) = \sum_{h \in \mathbb F_q[t]/ \pi^{d_2} } \left( \frac{h}{\pi^{d_2} } \right)_\chi \psi \left( \operatorname{res} \left(h \pi^{ d_1-d_2}  \right) \right).\end{equation}

First, if $d_2=0$, the sum \eqref{eq-gpp} has a single term and equals $1$. Second, $\left( \frac{h}{\pi^{d_2} } \right)_\chi$ depends only on $h \bmod \pi$, so if $d_1 < d_2-1$, the $\psi$ term cancels in each residue class mod $\pi$ and so the sum \eqref{eq-gpp} vanishes. If $d_1 \geq d_2$, the $\psi$ term can be ignored and the sum \eqref{eq-gpp} vanishes because the multiplicative character cancels, unless $d_2\equiv 0 \bmod \nchi$, in which case the summand is $1$ if $h$ is prime to $\pi$ and $0$ otherwise, and the value of the sum \eqref{eq-gpp} is simply the number of $h$ prime to $\pi$, which is $(q^{\deg \pi} -1 ) q^{ (d_2 -1 ) \deg \pi}$. 

If $d_1 =d_2-1$, the sum \eqref{eq-gpp} is equal to \[q^{(d_2-1 ) \deg \pi} \sum_{h \in \mathbb F_q[t]/ \pi} \left( \frac{h}{\pi } \right)^{d_2}_\chi \psi \left( \operatorname{res} \left(\frac{h}{\pi}  \right) \right) = q^{(d_2-1)\deg \pi } g_{ \chi^{d_2}} (1,\pi) \]
\[=q^{(d_2-1 ) \deg \pi} (-1)^{ \frac{ \deg \pi (\deg \pi-1) (q-1)}{4}}  \left( \frac{\pi'}{\pi} \right)_\chi^{d_2} \left( \frac{\pi'}{\pi} \right)_{\xi}  (G(\chi^{d_2},\psi))^{\deg \pi}  =- q^{(d_2-1 ) \deg \pi}  \left(\frac{\pi'}{\pi}\right)^{d_2}_\chi  (- G(\chi^{d_2},\psi))^{\deg \pi} \]
by Lemma \ref{gauss-evaluation} and \eqref{me-3}, verifying the last remaining case. \end{proof}


\subsection{$\ell$-adic sheaves}
We have the following basic properties of $IC_{\mathcal L_\chi(f)}$.

\begin{lemma}\label{basic-IC}

\begin{enumerate}

\item For $f$ a function on $X$ and $g$ an invertible function on $X$, \[IC_{\mathcal L_\chi(fg)}\cong  IC_{\mathcal L_\chi(f)} \otimes \mathcal L_\chi(g).\]

\item For $s\colon X\to Y$ a smooth map and $f$ a function on $Y$, \[IC_{\mathcal L_\chi(f \circ s)} \cong s^* IC_{\mathcal L_\chi(f)}.\]

\item For $X$ and $Y$ two varieties, $f$ a function on $X$ and $g$ a function on $Y$, \[IC_{\mathcal L_\chi ( (x,y) \mapsto f(x)g(y))} \cong IC_{\mathcal L_\chi(f) } \boxtimes IC_{\mathcal L_\chi(g)}.\]

\item For $f$ a function on $X$, with $X$ of dimension $d$, and $D$ the Verdier dual,

\[ D IC_{\mathcal L_\chi(f)} \cong IC_{\mathcal L_{\chi^{-1}} (f)} [2d] (d).\]

\end{enumerate}

\end{lemma}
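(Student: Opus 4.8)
The plan is to verify each of the four isomorphisms by first establishing it on the open set $U$ where the relevant Kummer sheaf is lisse, and then invoking functoriality of the middle extension. The key principle throughout is that $j_{*!}$ is characterized by: (a) its restriction to the open dense smooth locus, and (b) the property of having no sub or quotient supported on a proper closed subset; equivalently, it commutes with any operation that preserves perversity and is an equivalence on the relevant open set, or that is exact and faithful. So for each part, I would identify the open set, check the statement for lisse Kummer sheaves there (a routine computation with the Kummer torsor), and then push the identification across the middle extension.

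For part (1), on the locus $U$ where both $f$ and $fg$ are invertible (which, since $g$ is invertible on all of $X$, is the same as the locus where $f$ is invertible), the Kummer torsor attached to $fg$ is the product of those attached to $f$ and $g$, so $\mathcal L_\chi(fg)\cong \mathcal L_\chi(f)\otimes\mathcal L_\chi(g)$ there; tensoring a perverse sheaf by a lisse rank-one sheaf on all of $X$ is an exact autoequivalence of the perverse category commuting with $j_{*!}$, which gives the claim after the shift bookkeeping. For part (2), smoothness of $s$ makes $s^*[\dim s]$ (suitably shifted) $t$-exact and it takes middle extensions to middle extensions because $s$ is flat with smooth fibers; and $\mathcal L_\chi(f\circ s)\cong s^*\mathcal L_\chi(f)$ on the preimage of the open locus by functoriality of Kummer. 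For part (3), the external tensor product $IC_{\mathcal L_\chi(f)}\boxtimes IC_{\mathcal L_\chi(g)}$ is again an IC sheaf (the exterior tensor of two IC sheaves is IC, by the Künneth-type compatibility of middle extension with products), its restriction to $U_X\times U_Y$ is $\mathcal L_\chi(f)\boxtimes\mathcal L_\chi(g)[\dim X+\dim Y]\cong \mathcal L_\chi((x,y)\mapsto f(x)g(y))[\dim X + \dim Y]$, and the open set $U_X \times U_Y$ is dense in the smooth locus where $(x,y)\mapsto f(x)g(y)$ is invertible, so the two middle extensions agree. For part (4), Verdier duality is $t$-exact for the perverse $t$-structure and interchanges $j_!$ and $j_*$, hence fixes $j_{*!}$ up to the standard twist; on $U$ one has $D(\mathcal L_\chi(f)[d]) \cong \mathcal L_{\chi^{-1}}(f)[d](d)$ since the dual of a lisse rank-one sheaf $\mathcal L$ is $\mathcal L^{-1}$ and $\mathcal L_\chi(f)^{-1} = \mathcal L_{\chi^{-1}}(f)$, plus Poincaré duality on the smooth $d$-dimensional $U$ contributing $[2d](d)$; unwinding the $[-d]$ shifts in the definition of $IC$ yields the stated $[2d](d)$.

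The one point requiring a little care — and the place I expect to spend the most words — is making precise that ``a functor that is $t$-exact and restricts to an equivalence on a dense open set commutes with middle extension.'' The cleanest route is to recall that $j_{*!}\mathcal F$ is the image of the canonical map ${}^p j_! \mathcal F \to {}^p j_* \mathcal F$ in the perverse category, and that each of $-\otimes\mathcal L_\chi(g)$, $s^*[\dim s]$, $-\boxtimes IC_{\mathcal L_\chi(g)}$, and $D$ commutes with $j_!$, $j_*$ and is exact, hence commutes with taking this image; for $D$ one additionally uses that it swaps $j_!\leftrightarrow j_*$ and reverses arrows, so the image of the map is carried to the image of the dual map, which is again the middle extension. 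I would state this as a short lemma (or cite it) and then apply it four times, each time with a one-line verification of the behavior on $U$ and a one-line check that the shifts match. No genuinely hard geometry is involved; the content is entirely in organizing the formal properties of middle extensions correctly and tracking Tate twists and degree shifts.
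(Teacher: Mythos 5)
Your proposal is correct and follows essentially the same route as the paper: for each part, establish the identity of Kummer sheaves on the open locus, then transport it through the middle extension by invoking the compatibility of $j_{*!}$ with the relevant functor. The paper states these compatibilities tersely (ultimately reducing them to compatibilities of $j_!$ and $j_*$), while you make this explicit via the image characterization of $j_{*!}$ — the same argument, just unpacked a level further.
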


\begin{proof} These all are proved by combining a basic property of middle extension with a property of the sheaves $\mathcal L_\chi$ that follows in a straightforward way from their definition. 

(1) follows from the fact that middle extension is compatible with tensor product with lisse sheaves, and the fact that $\mathcal L_\chi(f) \otimes \mathcal L_\chi(g) = \mathcal L_\chi(fg)$.

(2) follows from the fact that middle extension is compatible with smooth pullback (once shifts are taken into account) and $s^* \mathcal L_\chi(f) = \mathcal L_\chi(f \circ s)$.

(3) follows from the fact that both middle extension and $\mathcal L_\chi$ are compatible with $\boxtimes$.

(4) follows from the fact that middle extension is compatible with Verdier duality and $\mathcal L_\chi$ is dual to $\mathcal L_{\chi^{-1}}$ as a lisse sheaf, hence $D \mathcal L_\chi(f) = \mathcal L_{\chi^{-1}} (f) [2d](d).$ 

These middle extension compatibilities follow from the, even more standard, compatibilities of $j_!$ and $j_*$ with these operations.\end{proof}

We need also a slightly more complicated observation along the same lines. First, we define and describe the notion of the Weil restriction of a complex of sheaves, building on the notion of a tensor direct image of sheaves defined by \cite{ARL}.

\begin{defi} Let $k'/k$ be a finite Galois field extension. Let $X$ be a variety over $k'$. The \emph{Weil restriction}  $WR_{k'}^k X$ is defined as the variety over $k$ whose $R$ points for a $k$-algebra $R$ are the $R\otimes_k k'$-points of $X$.

For $R$ a $k'$-algebra, the natural map $R\otimes_k k' \to R$ defines a map from $R$-points of $WR_{k'}^k X$ to $R$-points of $X$, defining a map $\rho \colon  (WR_{k'}^k X)_{k'} \to X$.

Let $\pi \colon (WR_{k'}^k X)_{k'}\to WR_{k'}^k X$ be the natural map.

For $K'$ a complex on $(WR_{k'}^k X)_{k'}$, \citet[Definition 2 on p. 133]{ARL} defines the \emph{tensor direct image}  $\pi_{\otimes *} K'$ as the unique complex on $WR_{k'}^k X$ whose pullback to $(WR_{k'}^k X)_{k'}$ is isomorphic to $\bigotimes_{\tau\in \Gal(k'/k) }\tau^* K'$ where the natural action of $\Gal(k'/k) $ on the pullback is equal to the natural action of $\Gal(k'/k) $ permuting the factors (which exists and is unique by \citep[Proposition 8 on p. 133]{ARL}). 

For $K$ a complex on $X_{k'}$, define the \emph{Weil restriction}  $WR_{k'}^k K$ by
\[ WR_{k'}^k K = \pi_{\otimes *} \rho^* K.\] \end{defi}

\begin{remark} Note that this definition uses complexes of sheaves rather than the derived category of sheaves because the descent argument needed to prove existence and uniqueness would, in the derived category, require checking higher compatibilities of the action. If $K$ is an ordinary sheaf, or a perverse sheaf, up to shift, these subtleties can be avoided, as these categories satisfy \'{e}tale descent. We will only apply this in the case of perverse sheaves up to shift. \end{remark}


%
%

\begin{lemma} \label{weil-restriction-trace-function} Let $X$ be a variety over $\mathbb F_{q^d}$. Let $K$ be a perverse sheaf on $X$. Then the trace of Frobenius on the stalk of $WR_{\mathbb F_{q^d}}^{\mathbb F_q} K $ at an $\mathbb F_q$-point is equal to the trace of Frobenius on the stalk of  $K$ at the corresponding $\mathbb F_{q^d}$-point, using the natural bijection $X(\mathbb F_{q^d}) = WR_{\mathbb F_{q^d}}^{\mathbb F_q} X ( \mathbb F_q)$. \end{lemma}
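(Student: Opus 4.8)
The plan is to compute the trace of Frobenius on the stalk of $WR_{\mathbb F_{q^d}}^{\mathbb F_q} K = \pi_{\otimes *} \rho^* K$ at an $\mathbb F_q$-point $x$ of $WR_{\mathbb F_{q^d}}^{\mathbb F_q} X$ by working on the base change to $\mathbb F_{q^d}$, where the complex $\pi^* WR_{\mathbb F_{q^d}}^{\mathbb F_q} K$ is identified with $\bigotimes_{\tau \in \Gal(\mathbb F_{q^d}/\mathbb F_q)} \tau^* \rho^* K$, with the Galois group permuting tensor factors. First I would recall that, under the bijection $X(\mathbb F_{q^d}) = (WR_{\mathbb F_{q^d}}^{\mathbb F_q} X)(\mathbb F_q)$, an $\mathbb F_q$-point $x$ of the Weil restriction corresponds to a single $\mathbb F_{q^d}$-point $y$ of $X$; pulling $x$ back to an $\mathbb F_{q^d}$-point of $(WR_{\mathbb F_{q^d}}^{\mathbb F_q} X)_{\mathbb F_{q^d}}$ via $\pi$ and then pushing to $X$ via $\rho$, the Galois conjugates $\tau(x)$ land on the Galois conjugates $y^{(i)} = \Frob_q^i(y)$ for $i = 0, \dots, d-1$, which are the $d$ distinct $\mathbb F_{q^d}$-points of $X_{\mathbb F_{q^d}}$ lying over $y \in X(\mathbb F_{q^d})$.

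The key computation is then: the stalk of $\pi^* WR_{\mathbb F_{q^d}}^{\mathbb F_q} K$ at the chosen $\mathbb F_{q^d}$-point over $x$ is $\bigotimes_{i=0}^{d-1} K_{y^{(i)}}$, where $K_{y^{(i)}}$ denotes the stalk of $\rho^* K$ at $\tau(x)$, which equals the stalk of $K$ at $y^{(i)}$ as a vector space (but \emph{not} as a Galois module — each $y^{(i)}$ has residue field $\mathbb F_{q^d}$, so carries an action of $\Frob_{q^d}$). Descent to $WR_{\mathbb F_{q^d}}^{\mathbb F_q} K$ means that the stalk at $x$ is this tensor product, and the action of $\Frob_q$ on $x$'s stalk is obtained by combining the permutation of the factors $K_{y^{(i)}} \mapsto K_{y^{(i+1)}}$ (cyclically, with indices mod $d$) induced by the geometric Galois action, together with the arithmetic Frobenius. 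The upshot is a standard "induced representation / twisted permutation" computation: if $T$ denotes the operator $\Frob_{q^d}$ on $K_y$, then the composite that cyclically shifts through all $d$ factors and returns to the first is conjugate to $T$, so the trace of $\Frob_q$ on $\bigotimes_{i=0}^{d-1} K_{y^{(i)}}$ equals $\tr(T \mid K_y) = \tr(\Frob_{q^d} \mid K_y)$, which is exactly the trace of Frobenius on the stalk of $K$ at $y$.

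I would carry this out by first reducing to the case where $K$ is a single sheaf in a fixed degree (by additivity of traces over the terms of the complex, or by the remark that we only need perverse sheaves up to shift, where \'etale descent applies), then invoking the explicit description in \cite{ARL} of $\pi_{\otimes *}$ via descent, and finally doing the linear algebra of the trace of a cyclic-shift-composed-with-$T$ operator on an $d$-fold tensor power. Concretely, choosing a basis of $K_y$, the operator on the tensor product sends a basis vector $e_{i_0} \otimes \cdots \otimes e_{i_{d-1}}$ to $(T e_{i_{d-1}})$ placed in slot $0$ and $e_{i_0}, \dots, e_{i_{d-2}}$ shifted into slots $1, \dots, d-1$; the only diagonal contributions to the trace come from tuples with $i_0 = i_1 = \cdots = i_{d-1}$, and summing these reproduces $\tr(T)$.

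The main obstacle I expect is bookkeeping the Galois action correctly: one must be careful about the distinction between the arithmetic Frobenius $\Frob_q$ acting on the $\mathbb F_q$-variety $WR_{\mathbb F_{q^d}}^{\mathbb F_q} X$, the geometric Galois action of $\Gal(\mathbb F_{q^d}/\mathbb F_q)$ that permutes the tensor factors in the definition of $\pi_{\otimes *}$, and the residual $\Frob_{q^d}$ acting on each stalk $K_{y^{(i)}}$ — and to check that the way these fit together in the descent datum is precisely such that $\Frob_q^d$ on the stalk at $x$ corresponds to $\Frob_{q^d}$ acting diagonally (equivalently, $\Frob_q$ acts as shift-then-partial-Frobenius). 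Once this identification is pinned down, the trace computation is purely formal. I would also note that this lemma is the sheaf-theoretic shadow of the elementary fact that the norm-compatible systems of Weil numbers and the Hasse--Davenport-type relations behave well under field extension, which is why it belongs in this preliminary section.
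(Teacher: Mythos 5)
Your proof is correct, but it is more self-contained than the paper's, which simply cites \citet[Proposition 9 on p.\ 133]{ARL} for the key identity that the trace of $\Frob_q$ on the stalk of $\pi_{\otimes *}(\rho^* K)$ at $x$ equals the trace of $\Frob_{q^d}$ on the stalk of $\rho^* K$ at $\pi^{-1}(x)$, and then just unwinds $\rho$. The ``twisted cyclic shift'' trace computation you sketch --- a basis vector $e_{i_0}\otimes\cdots\otimes e_{i_{d-1}}$ contributes to the trace only when $i_0=\cdots=i_{d-1}$, recovering $\tr(T)$ --- is in substance a proof of that cited proposition, so you are re-deriving the reference rather than invoking it. That is a legitimate tradeoff: it makes the argument independent of \cite{ARL} at the cost of some bookkeeping.

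One point worth making explicit that your sketch glosses over: since $K$ is a complex (a perverse sheaf, not a single sheaf in a single degree), the stalk $K_y$ lives in several cohomological degrees, the $d$-fold tensor product is a tensor product of complexes, and the cyclic permutation of factors carries Koszul signs. The trace of Frobenius means the alternating sum $\sum_n(-1)^n\tr(\cdot\mid\mathcal H^n)$, and the diagonal contributions occur only when all degrees $i_j$ are equal, say to $i$, so $n=di$; the Koszul sign for the cyclic shift is then $(-1)^{(d-1)i^2}$. Since $(d-1)i^2+di\equiv i\pmod 2$, the product $(-1)^{di}(-1)^{(d-1)i^2}=(-1)^i$, and the alternating sums on the two sides match. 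So the identity survives intact at the level of complexes, but this sign check is part of the argument and should not be elided. Your instinct to ``reduce to a single sheaf in a fixed degree'' is not available as stated --- the tensor direct image is only defined by descent for (shifts of) perverse sheaves, not for the individual cohomology sheaves of $K$ --- so working with the full stalk complex and tracking signs as above is the cleaner way to finish.
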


\begin{proof} By definition and \cite[Proposition 9 on p. 133]{ARL}, the trace of $\operatorname{Frob}_q$ on the stalk of $WR_{\mathbb F_{q^d}}^{\mathbb F_q} K$ at an $\mathbb F_q$-point $x$ is the trace of $\operatorname{Frob}_{q^d}$ on the stalk of $\rho^* K$ on the $\mathbb F_{q^d}$-point $\pi^{-1}(x)$. The stalk of $\rho^* K$ at $\pi^{-1}(x)$ is the stalk of $K$ at $\rho(\pi^{-1}(x))$, which is the corresponding $\mathbb F_{q^d}$-point of $X$. \end{proof}

\begin{lemma}\label{norms} Let $k'/k$ be a finite Galois field extension of fields containing $\mu_{n}$. Let $X$ be a variety over $k'$ and $f$ a function on $X$. Let $WR_{k'}^k X$ be the Weil restriction from $k'$ to $k$ of $X$. The function $f$ on $X$ induces a map $ WR_{k'}^k X \to WR_{k'}^k \mathbb A^1$, which we can compose with the norm map $WR_{k'}^k \mathbb A^1 \to \mathbb A^1$ to obtain a function $Nf$ on $WR_{k'}^k X$. Let $WR_{k'}^k IC_{\mathcal L_\chi(f)}$ be the Weil restriction of $IC_{\mathcal L_\chi(f)}$. 
Then
\begin{equation} \label{eq-norms}  WR_{k'}^k IC_{\mathcal L_\chi(f)} \cong  IC_{\mathcal L_\chi( N f) }.\end{equation} \end{lemma}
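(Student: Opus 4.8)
The statement asserts that Weil restriction of the IC sheaf attached to $\mathcal L_\chi(f)$ on $X/k'$ agrees with the IC sheaf attached to $\mathcal L_\chi(Nf)$ on $WR_{k'}^k X$. My plan is to work first on the open set where everything is lisse, then invoke compatibility of middle extension (and hence of $IC$) with the operations involved. Let $U \subseteq X$ be the maximal smooth open subscheme where $f$ is invertible, as in the definition of $IC_{\mathcal L_\chi(f)}$, and let $d = \dim X$. First I would identify the open subscheme $W$ of $WR_{k'}^k X$ over which $Nf$ is smooth and invertible: since the norm of an element of $R\otimes_k k'$ is a unit iff the element is, we get $W = WR_{k'}^k U$, which is smooth over $k$ of dimension $[k':k]\cdot d$. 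The key local computation is then that on the base change $W_{k'} = (WR_{k'}^k U)_{k'}$, with $\rho\colon W_{k'}\to U$ and $\pi\colon W_{k'}\to W$ the structure maps, one has a canonical isomorphism
\[ \pi^* \mathcal L_\chi(Nf) \;\cong\; \bigotimes_{\tau \in \Gal(k'/k)} \tau^* \rho^* \mathcal L_\chi(f) \;=\; \bigotimes_{\tau} \rho^*\tau^*\mathcal L_\chi(f), \]
compatibly with the Galois action permuting the factors. This follows because $Nf\circ \rho \circ(\text{base change}) = \prod_{\tau} \tau^*(f\circ\rho)$ as functions on $W_{k'}$ — i.e.\ pulling back $Nf$ and then to $k'$ factors as a product of Galois conjugates of the pullback of $f$ — together with the elementary identity $\mathcal L_\chi(g_1 g_2) \cong \mathcal L_\chi(g_1)\otimes\mathcal L_\chi(g_2)$ and $\tau^*\mathcal L_\chi(g) \cong \mathcal L_\chi(\tau^* g)$ for the Kummer sheaves, all of which are immediate from the Kummer-torsor construction. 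Hence $\pi^*\mathcal L_\chi(Nf)[{\dim W}]$, pulled back to $W_{k'}$, matches $\bigotimes_\tau \tau^* \rho^*(\mathcal L_\chi(f)[d])$ (the shift works out since $\dim W = [k':k]\,d$ and tensoring $[k':k]$ perverse shifted sheaves over a smooth base shifts by $[k':k]\,d$), which is exactly the defining property of $WR_{k'}^k(\mathcal L_\chi(f)[d])$ up to the perverse normalization; unwinding the shifts in the definition of $IC$ gives
\[ \bigl(WR_{k'}^k IC_{\mathcal L_\chi(f)}\bigr)\big|_{W} \;\cong\; IC_{\mathcal L_\chi(Nf)}\big|_{W}. \]

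Next I would upgrade this isomorphism over the lisse locus $W$ to an isomorphism over all of $WR_{k'}^k X$ by showing both sides are middle extensions from $W$. The right side is $IC_{\mathcal L_\chi(Nf)}$, which by construction is the middle extension of $\mathcal L_\chi(Nf)[\dim W]$ from $W$ (shifted back). For the left side, I need that $WR_{k'}^k(-)$, applied to a middle extension, is again the middle extension of its restriction to the lisse locus. This is the crux: the tensor direct image $\pi_{\otimes *}$ is defined via étale descent of the sheaf $\bigotimes_\tau \tau^* K'$, and $j_{*!}$ on $X$ pulls back via the finite étale-ish maps $\rho,\pi$ compatibly — more precisely, $\rho^*$ of a middle extension is a middle extension (smooth pullback along $\rho$, using Lemma \ref{basic-IC}(2) after checking $\rho$ is smooth, which it is since it is the base change of the smooth structure map of $WR_{k'}^k X$), external tensor product of middle extensions along the distinct Galois factors is a middle extension (Lemma \ref{basic-IC}(3) applied over $\bar k$, where $WR_{k'}^k X$ becomes a product $\prod_\tau {}^\tau X$), and finally descent along $\pi$ preserves the middle-extension property because the complement $WR_{k'}^k X \setminus W$ is stable under the descent datum and middle extension is characterized by pointwise conditions (no sub/quotient supported on a smaller closed set) that descend along the finite radicial-up-to-Galois map $\pi$. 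So I would first establish the isomorphism \eqref{eq-norms} after base change to $\bar k$ using Lemma \ref{basic-IC}(3) together with the description of $(WR_{k'}^k X)_{\bar k} = \prod_{\tau\in\Gal(k'/k)} {}^\tau X_{\bar k}$ and of $Nf$ as $\prod_\tau {}^\tau f$ under this identification, then observe the isomorphism is Galois-equivariant (it is built from canonical maps), and descend.

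\textbf{Main obstacle.} The hard part is the bookkeeping around the tensor direct image and middle extension over a non-algebraically-closed base: making precise that "$WR_{k'}^k$ of a middle extension is a middle extension" and that the isomorphism constructed over $\bar k$ descends to $k$. Over $\bar k$ everything is a clean external tensor product and Lemma \ref{basic-IC}(3) applies directly, but the Galois descent datum that cuts this down to $WR_{k'}^k X$ over $k$ must be matched on both sides; the cleanest route is probably to note that $WR_{k'}^k$ is defined precisely so that pulling back to $\bar k$ (or to $k'$) turns it into $\bigotimes_\tau \tau^*(-)$ with the tautological permutation action, so it suffices to exhibit the isomorphism $\pi^* IC_{\mathcal L_\chi(Nf)} \cong \bigotimes_\tau \tau^*\rho^* IC_{\mathcal L_\chi(f)}$ together with its equivariance, and then uniqueness in the definition of $\pi_{\otimes *}$ (via \citep[Proposition 8 on p. 133]{ARL}, valid for perverse sheaves by étale descent as noted in the Remark) does the rest. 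One also needs to double-check that $Nf$ is nonvanishing on $WR_{k'}^k X$ in the sense required (it is, since its non-vanishing locus is $WR_{k'}^k U$ which is dense), so that $IC_{\mathcal L_\chi(Nf)}$ is even defined. Everything else — the shift arithmetic, the identity $N f \circ(\text{pullbacks}) = \prod_\tau {}^\tau f$, the multiplicativity of $\mathcal L_\chi$ — is routine given Lemma \ref{basic-IC}.
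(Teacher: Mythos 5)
Your proposal follows the same backbone as the paper's proof: identify the base change $(WR_{k'}^k X)_{k'}$ with the product $\prod_{\tau\in\Gal(k'/k)} X$, observe that $Nf$ pulls back to $\prod_\tau f\circ\rho\circ\tau$, invoke Lemma \ref{basic-IC}(3) to get the isomorphism over $k'$, and then descend. Where you diverge is in how the descent is closed, and here the paper's route is noticeably slicker than either of the two alternatives you sketch. You propose either (a) to verify directly that the isomorphism built over $k'$ is $\Gal(k'/k)$-equivariant "since it is built from canonical maps," then invoke uniqueness of $\pi_{\otimes*}$, or (b) to prove that Weil restriction preserves the middle-extension property via a chain of pullback/boxtimes/descent arguments. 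Both of these are plausibly correct, but both require genuine care that you defer to hand-waving ("descent along the finite radicial-up-to-Galois map $\pi$," which is the wrong description — $\pi$ is finite \'etale Galois, not radicial — though the intended conclusion is salvageable). The paper instead observes: since $WR_{k'}^k IC_{\mathcal L_\chi(f)}$ becomes isomorphic over $k'$ to $IC_{\mathcal L_\chi(Nf)}$, which is the middle extension of a rank-one lisse sheaf, the Weil restriction itself is the middle extension of a rank-one lisse sheaf over $k$ (being an IC sheaf is \'etale-local on the base). Two rank-one lisse sheaves on the same open dense set that become isomorphic over $k'$ are compared by a single stalk, as Galois representations; the explicit Kummer description of $\mathcal L_\chi$ (the stalk transforms like an $n$th root of the function value, and the $n$th root of a product is the product of $n$th roots) finishes it in one line. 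This trick sidesteps both the equivariance check and the general middle-extension-preservation claim, neither of which you would get for free. Your proof would work if completed, but the paper's endgame is cleaner and is the part you would most benefit from studying.
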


\begin{proof}  Since $k'/k$ is Galois, we have an isomorphism \[ (WR_{k'}^k X)_{k'} = \prod_{ \tau \in \Gal(k'/k)} X \] with the projection onto the $\tau$'th factor given by $\rho\circ \tau$.

By definition the pullback of $WR_{k'}^k IC_{\mathcal L_\chi(f)} $ to $k'$ is given by
\[ \bigotimes_{ \tau \in \Gal(k'/k)}  \tau^* \rho^* IC_{\mathcal L_\chi(f)} = \boxtimes_{ \tau \in \Gal(k'/k) }  IC_{\mathcal L_\chi(f)} = IC_{ \mathcal L_\chi( \prod_{ \tau \in \Gal(k'/k)} f\circ \rho \circ \tau )} = IC_{ \mathcal L_\chi(N f) } \] by Lemma \ref{basic-IC}(3) and the identity $\prod_{ \tau \in \Gal(k'/k)} f\circ \rho \circ \tau = Nf$ on  $(WR_{k'}^k X)_{k'}$. So the two complexes in \eqref{eq-norms} are isomorphic after pullback to $k'$.

Since $IC_{ \mathcal L_\chi(N f) }$ is the middle extension of a lisse sheaf of rank one, it follows that $WR_{k'}^k IC_{\mathcal L_\chi(f)}  $ is the middle extension of a lisse sheaf of rank one as well. To check they are isomorphic over $k$, it suffices to check the lisse sheaves are isomorphic, for which, because they are isomorphic over $k'$, it suffices to check that their stalks at a single point are isomorphic as Galois representations.

 For $WR_{k'}^k IC_{\mathcal L_\chi(f)}$, the stalk at a geometric point  $x \in WR_{k'}^k X$ where $Nf$ is nonzero is naturally the tensor product of a one-dimensional vector space for each $\tau \in \Gal(k'/k)$, and on each one-dimensional vector space the action is the same as on an $\nchi$'th root of $f(\rho(\tau(x))$. For $IC_{ \mathcal L_\chi(N f) }$, the Galois action is the same as the Galois action on the $\nchi$th root of $Nf (x)$. Because $Nf(x) = \prod_{\tau\in \Gal(k'/k)} f(\rho(\tau(x))$, and the $\nchi$th root of the product is the product of the $\nchi$th roots of the factors, these are the same.\end{proof}

\begin{lemma}\label{scale-invariance} Let $X$ be a variety with an action of $\mathbb G_m$ described by a map $a\colon X \times \mathbb G_m \to X$. Let $f$ be a function on $X$ and $r$ an integer such that $f(a(x,\lambda))= f(x)\lambda^r$ for all $x\in X$ and $\lambda\in \mathbb G_m$.

If $r$ is divisible by $\nchi$, then $IC_{\mathcal L_\chi(f)}$ is $\mathbb G_m$-invariant, in the sense that $a^* IC_{\mathcal L_\chi(f)} = IC_{\mathcal L_{\chi}(f) }\boxtimes \mathbb Q_\ell$. In particular, this always happens if we compose $a$ with the $\nchi$'th power homomorphism $\mathbb G_m \to \mathbb G_m$.

If $r$ is not divisible by $\nchi$, then the stalk of $IC_{\mathcal L_\chi(f)}$ vanishes at every $\mathbb G_m$-invariant point.\end{lemma}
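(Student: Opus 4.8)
The plan is to deduce both parts from a single external-product identity on $X\times\mathbb G_m$, and then inspect the Kummer sheaf that appears on the $\mathbb G_m$-factor. To set this up, first note that $a\colon X\times\mathbb G_m\to X$ is smooth and surjective: the map $(a,\mathrm{pr}_2)\colon X\times\mathbb G_m\to X\times\mathbb G_m$ is an isomorphism (with inverse $(x,\lambda)\mapsto(a(x,\lambda^{-1}),\lambda)$), and $a$ is its composite with $\mathrm{pr}_1$; in particular $f\circ a$ is again nonvanishing. As a function on $X\times\mathbb G_m$ we have $f\circ a\colon(x,\lambda)\mapsto f(x)\cdot\lambda^r$, the product of the pullback of $f$ from $X$ and the pullback of $\lambda^r$ from $\mathbb G_m$. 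Iterating the sheaf-level identity $\mathcal L_\chi(g_1)\otimes\mathcal L_\chi(g_2)=\mathcal L_\chi(g_1g_2)$ underlying Lemma~\ref{basic-IC}(1) gives $\mathcal L_\chi(\lambda^r)=\mathcal L_\chi(\lambda)^{\otimes r}=\mathcal L_{\chi^r}(\lambda)$, and since $\lambda^r$ is invertible everywhere on $\mathbb G_m$ this lisse rank-one sheaf is its own middle extension, so $IC_{\mathcal L_\chi(\lambda^r)}$ is just $\mathcal L_{\chi^r}(\lambda)$ placed in degree $0$. Combining Lemma~\ref{basic-IC}(2) (smooth pullback) with Lemma~\ref{basic-IC}(3) (external products) then yields
\[ a^*IC_{\mathcal L_\chi(f)}\;=\;IC_{\mathcal L_\chi(f\circ a)}\;=\;IC_{\mathcal L_\chi(f)}\boxtimes\mathcal L_{\chi^r}(\lambda). \]

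If $n\mid r$, then $\chi^r$ is the trivial character, so $\mathcal L_{\chi^r}(\lambda)=\Ql$ and the identity becomes $a^*IC_{\mathcal L_\chi(f)}\cong IC_{\mathcal L_\chi(f)}\boxtimes\Ql$, which is the asserted $\mathbb G_m$-invariance. For the ``in particular'' clause I would apply this to the rescaled action $a'=a\circ(\mathrm{id}_X\times p)$, where $p\colon\mathbb G_m\to\mathbb G_m$ is the $n$th power map: $a'$ is again a $\mathbb G_m$-action whose fixed points include those of $a$, and the relevant exponent for $a'$ is $rn$, which is divisible by $n$, so the previous case applies to $a'$.

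If $n\nmid r$, set $\mathcal M:=\mathcal L_{\chi^r}(\lambda)$, a \emph{nontrivial} Kummer sheaf on $\mathbb G_m$ (its geometric monodromy is cyclic of order $n/\gcd(n,r)>1$), hence not geometrically constant. Let $x_0$ be a $\mathbb G_m$-fixed geometric point and $V:=(IC_{\mathcal L_\chi(f)})_{x_0}$ its stalk; the goal is $V=0$. Restrict the identity $a^*IC_{\mathcal L_\chi(f)}\cong IC_{\mathcal L_\chi(f)}\boxtimes\mathcal M$ to the fibre $\{x_0\}\times\mathbb G_m\cong\mathbb G_m$: on the left, $a$ restricted to this fibre is the constant map with value $x_0$, so the restriction of $a^*IC_{\mathcal L_\chi(f)}$ is the constant complex with stalk $V$; the restriction of $IC_{\mathcal L_\chi(f)}\boxtimes\mathcal M$ is that same constant complex tensored with the lisse sheaf $\mathcal M$. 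These being isomorphic and $\otimes\mathcal M$ being exact, for each $i$ the constant sheaf $\mathcal H^i(V)$ is isomorphic to $\mathcal H^i(V)\otimes\mathcal M$; since $\mathcal M$ is not geometrically constant this forces $\mathcal H^i(V)=0$ for all $i$, i.e. $V=0$. (Consistently, $a(x_0,\lambda)=x_0$ together with $f(a(x_0,\lambda))=f(x_0)\lambda^r$ gives $f(x_0)(1-\lambda^r)=0$ identically in $\lambda$, hence $f(x_0)=0$ as $r\neq0$, so $x_0$ lies outside the open set $U$ on which $\mathcal L_\chi(f)$ is defined; but it is the computation above that also yields vanishing of the $IC$-stalk.)

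The routine parts are the smoothness of $a$, the identification of $f\circ a$ and of $\mathcal L_\chi(\lambda^r)$, and the two substitutions feeding into the cases $n\mid r$ and $n\nmid r$. The step needing care is the final fibre-restriction argument when $n\nmid r$: one must verify that restricting the external-product identity to $\{x_0\}\times\mathbb G_m$ genuinely produces a geometrically constant complex on one side and a Kummer-twisted complex on the other, and keep the perverse-shift conventions built into $IC$ straight when invoking Lemma~\ref{basic-IC}(2)--(3). I do not anticipate a real obstacle beyond this bookkeeping.
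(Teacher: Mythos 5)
Your argument follows the paper's own proof essentially verbatim: both use Lemma~\ref{basic-IC}(2)--(3) to rewrite $a^*IC_{\mathcal L_\chi(f)}$ as $IC_{\mathcal L_\chi(f)}\boxtimes\mathcal L_{\chi^r}(\lambda)$, then observe the Kummer factor is trivial when $\nchi\mid r$ and, when $\nchi\nmid r$, restrict to $\{P\}\times\mathbb G_m$ at a fixed point and deduce the stalk must vanish since a constant complex cannot be isomorphic to its twist by a nontrivial Kummer sheaf. The added details (exhibiting smoothness of $a$ via the shear isomorphism, using exactness of $\otimes\mathcal M$ to argue degree by degree, and noting $f(x_0)=0$) are welcome but do not change the route.
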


\begin{proof} Because $a$ is smooth, we have by Lemma \ref{basic-IC}(2,3) \[ a^* IC_{\mathcal L_\chi(f)}  \cong IC_{\mathcal L_\chi ( f \circ a)}\cong IC_{\mathcal L_\chi( f(x) \lambda^r) }\cong IC_{ \mathcal L_\chi(f)} \boxtimes IC_{\mathcal L_\chi(\lambda^r)} .\]

If $r$ is divisible by $\nchi$, then $IC_{\mathcal L_\chi( \lambda^r)}$ is the middle extension of the constant sheaf, hence is simply the constant sheaf.

If $r$ is not divisible by the order of $\chi$, then restricting this identity to $ P \times \mathbb G_m$ for a $\mathbb G_m$-fixed point $P$, we have $(IC_{\mathcal L_\chi(f)})_P \otimes \mathbb Q_\ell =  (IC_{\mathcal L_\chi(f)})_P \otimes \mathcal L_\chi(\lambda^r)$. Because one side has trivial monodromy and the other nontrivial, they cannot be isomorphic unless they both vanish. \end{proof}

\begin{lemma}\label{equivariant-vanishing} Let $B$ be a scheme of finite type over a field, $Y = B \times \mathbb A^1$, $u\colon B \times \mathbb G_m \to B \times \mathbb A^1$ the inclusion, $\pi\colon B \times \mathbb A^1 \to B$ the projection, $K$ a complex on $B \times \mathbb G_m$, and $N\neq 0$ an integer.

Assume that $K$ is invariant for the action of $\mathbb G_m$ on $B \times \mathbb G_m$  given by $a ( (b,\lambda_1), \lambda_2) = (b,\lambda_1 \lambda_2^N)$ for all $b\in B, \lambda_1,\lambda_2\in \mathbb G_m$.

Then $ \pi_* u_! K =0$. \end{lemma}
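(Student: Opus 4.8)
The plan is to recognize the statement as an instance of the contraction (Springer) lemma, where the hypothesis in force is that $K$ is $\mathbb G_m$-equivariant (``invariant'') for the displayed action of $\mathbb G_m$ on $B \times \mathbb G_m$. First I would extend that action from $B \times \mathbb G_m$ to $Y = B \times \mathbb A^1$ by the same formula: let $\tilde a\colon Y \times \mathbb G_m \to Y$ be $\tilde a((b,x),\lambda) = (b, x\lambda^N)$, now allowing $x \in \mathbb A^1$. With respect to $\tilde a$, the open immersion $u\colon B \times \mathbb G_m \to Y$ is $\mathbb G_m$-equivariant, its complementary closed immersion $v\colon B \times \{0\} \hookrightarrow Y$ is $\mathbb G_m$-equivariant with $\mathbb G_m$ acting trivially on $B \times \{0\}$, and $\pi\colon Y \to B$ is equivariant for the trivial action on $B$; under the identification $B \times \{0\} = B$ we have $\pi \circ v = \mathrm{id}$.

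Next I would reduce to the case $N \ge 1$. Replacing the action $((b,\lambda_1),\lambda_2)\mapsto(b,\lambda_1\lambda_2^N)$ by $((b,\lambda_1),\lambda_2)\mapsto(b,\lambda_1\lambda_2^{-N})$ changes $N$ into $-N$, keeps $u$, $v$, $\pi$ equivariant, and has exactly the same invariant complexes (the two actions differ by precomposition with the inversion automorphism of the acting group); since $N \ne 0$ we may thus assume $N \ge 1$. Then $\tilde a$ extends to a morphism $\bar a\colon Y \times \mathbb A^1 \to Y$, $\bar a((b,x),\lambda) = (b, x\lambda^N)$, whose restriction to $Y \times \{0\}$ is $v \circ \pi$; that is, this $\mathbb G_m$-action contracts $Y$ onto the section $v$. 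Since $K$ is $\mathbb G_m$-equivariant and $u$ is a $\mathbb G_m$-equivariant open immersion, $u_! K$ is a $\mathbb G_m$-equivariant complex on $Y$ (extension by zero is compatible with the pullbacks along the action and projection morphisms, which here sit in Cartesian squares). I would then invoke the contraction lemma in its relative form over $B$ and in its ``$*$''-incarnation: for any $\mathbb G_m$-equivariant complex $\mathcal F$ on $Y$, the natural map $R\pi_* \mathcal F \to v^* \mathcal F$ induced by $\pi \circ v = \mathrm{id}$ is an isomorphism. Applying this with $\mathcal F = u_! K$ gives $\pi_* u_! K \cong v^* u_! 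K$, and the right-hand side is zero because $u_! K$ is the extension by zero of $K$ from $B \times \mathbb G_m$, which is disjoint from $B \times \{0\}$. Hence $\pi_* u_! K = 0$.

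The step needing the most care is having the contraction lemma available in precisely this form: relative over the base $B$, contracting onto a section rather than a single fixed point, and in the ``$*$'' rather than the ``$!$'' direction — it is $R\pi_* \mathcal F \cong v^* \mathcal F$ for the attracting (as $\lambda\to 0$) action that is needed, not the dual $R\pi_! \mathcal F \cong v^! \mathcal F$. As a sanity check, the absolute case $B = \mathrm{pt}$, $K = \Ql$ the constant sheaf on $\mathbb G_m$ amounts to $R\Gamma(\mathbb A^1, (j_0)_! \Ql) = 0$ for $j_0\colon \mathbb G_m \hookrightarrow \mathbb A^1$, which follows from the excision triangle $(j_0)_!\Ql \to \Ql \to (i_0)_*\Ql \xrightarrow{+1}$ (with $i_0\colon\{0\}\hookrightarrow\mathbb A^1$) together with $R\Gamma(\mathbb A^1,\Ql) = \Ql$.

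If one prefers to avoid citing the contraction lemma, the same argument can be made by hand in this codimension-one situation. From the recollement triangle $v_* v^!(u_! K) \to u_! K \to Ru_*\,u^*(u_! K) \xrightarrow{+1}$, together with $u^*(u_! K) = K$ and the identity $v^!(u_! K) = v^*(Ru_* K)[-1]$, one sees that $R\pi_* u_! K$ is an extension of $R(\pi u)_* K$ by $v^*(Ru_* K)[-1]$; and for $\mathbb G_m$-equivariant $K$ the comparison morphism $R(\pi u)_* K \to v^*(Ru_* K)$, i.e.\ the restriction of $Ru_* K$ along the zero section, is an isomorphism (it is the comparison between the cohomology of $K$ along the fibers $\mathbb G_m$ and along their punctured formal neighborhoods at $0$, which agree for a complex on $\mathbb G_m$ invariant under translation), so the extension is zero.
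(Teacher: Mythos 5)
Your proof is essentially correct but takes a noticeably different route from the paper's. You invoke the contraction (attraction) lemma as a known result: extend the action to $Y = B\times\mathbb A^1$ so that it contracts onto the zero section $v\colon B\hookrightarrow Y$, observe that $u_!K$ inherits equivariance, and conclude $R\pi_*u_!K\cong v^*u_!K = 0$. The paper instead gives a self-contained reduction: it restricts the invariance isomorphism $a^*K\cong K\boxtimes\Ql$ to $\{\lambda_1=1\}$ to see that $K$ becomes constant after pullback by the finite map $\mathrm{id}\times\rho$ (with $\rho$ the $N$th power map), hence is a direct summand of $(\mathrm{id}\times\rho)_*(((\mathrm{id}\times i)^*K)\boxtimes\Ql)$; then a K\"unneth argument reduces the claim to $H^*(\mathbb A^1,\overline u_!\rho_*\Ql)=0$, which is handled by Artin vanishing, a direct $H^0$ computation, and the Grothendieck--Ogg--Shafarevich Euler-characteristic formula. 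Your approach is shorter and more conceptual, while the paper's is elementary and avoids citing the contraction lemma as a black box. The one point you should tighten: the paper's hypothesis is ``invariance'' in the weak sense of Lemma \ref{scale-invariance} (a non-equivariant isomorphism $a^*K\cong K\boxtimes\Ql$, no cocycle condition), whereas the standard contraction lemma is stated for genuinely equivariant complexes. The conclusion is still correct for the weaker hypothesis — after pullback by $\rho$ the complex becomes constant, so $K$ is a summand of a pushforward of Kummer-twisted constants, and for those the contraction statement reduces to the explicit $\mathbb A^1$ computation the paper carries out — but as written your argument quietly upgrades ``invariant'' to ``equivariant,'' and it would be worth either justifying that upgrade or noting that the contraction lemma holds for monodromic complexes (complexes becoming constant after a Kummer cover), which is the version actually needed.
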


\begin{proof} Let $\overline{\pi} \colon \mathbb A^1 \to \operatorname{pt}$ be the projection and $\overline{u} \colon \mathbb G_m \to \mathbb A^1$ the inclusion, so that $u = id \times \overline{u}$ and $\pi = id \times \overline{\pi}$. Let $\rho \colon \mathbb G_m \to \mathbb G_m$ be the $N$th power map. Let $i \colon \operatorname{pt} \to \mathbb G_m$ be the inclusion of the identity.

Restricting the $\mathbb G_m$-invariance property to the locus where $\lambda=1$	, we see that $ (id \times \rho)^* K  = ((id \times i)^* K )\boxtimes \mathbb Q_\ell $. Since $\rho$ is finite, it follows that $K$ is a summand of $(id\times\rho)_* ((id \times i)^* K )\boxtimes \mathbb Q_\ell$, so it suffices to prove the vanishing of
\[ \pi_* u_! (id\times\rho)_*(  ((id \times i)^* K )\boxtimes \mathbb Q_\ell)=  (id \times \overline{\pi})_*   (id \times \overline{u})_! (id\times\rho)_* (((id \times i)^* K )\boxtimes \mathbb Q_\ell). \]
But by the K\"unneth formula in the form \cite[Corollary 9.3.5]{leifu}, together with its compactly supported version \cite[Corollary 7.4.9]{leifu}, we have 
\[ (id \times \overline{\pi})_*   (id \times \overline{u})_! (id\times\rho)_* (((id \times i)^* K )\boxtimes \mathbb Q_\ell) = (id \times \overline{\pi})_*   (id \times \overline{u})_! (((id \times i)^* K )\boxtimes  \rho_* \mathbb Q_\ell)\]
\[= (id \times \overline{\pi})_*   (((id \times i)^* K )\boxtimes  \overline{u}_!  \rho_* \mathbb Q_\ell)=((id \times i)^* K )\boxtimes  \overline{\pi}_* \overline{u}_!  \rho_* \mathbb Q_\ell.\]
Thus it suffices to prove \[\overline{\pi}_* \overline{u}_!  \rho_* \mathbb Q_\ell=0,\] but
$\overline{\pi}_* \overline{u}_!  \rho_* \mathbb Q_\ell$ is a complex on a point, given by the cohomology groups $H^* (\mathbb A^1, \overline{u}_! \rho_* \mathbb Q_\ell)$.

By Artin's theorem \cite[Corollary 7.5.2]{leifu}, this cohomology vanishes in all degrees but zero and one. All global sections of $\overline{u}_! \rho_* \mathbb Q_\ell$ vanish at zero, hence vanish in a neighborhood of zero, hence vanish everywhere because $\overline{u}_! \rho_* \mathbb Q_\ell$ is lisse away from zero, so $H^0$ vanishes. By the Grothendieck-Ogg-Shafarevich Euler characteristic formula \cite[Theorem 7.1]{sga5}, the Euler characteristic of $\overline{u}_! \rho_* \mathbb Q_\ell$ is zero, so $H^1$ vanishes as well. \end{proof}

It is a classical fact that the ring of functions on an affine scheme with a $\mathbb G_m$-action is a $\mathbb Z$-graded ring (and the graded structure is equivalent to the $\mathbb G_m$ action). Indeed, for $R$ the ring of functions, the action of $\mathbb G_m$ defines a ring homomorphism $a^* \colon R \to R[ \lambda, \lambda^{-1} ] $ and one can define $R_d$ for $d\in \mathbb Z$ as the set of $x$ with $a^*(x)=x \lambda^d$. Then if we write $ a^*(x) = \sum_{d\in \mathbb Z} x_d \lambda^d$ with all but finitely many of the $x_d$ zero, associativity of the action implies $x_d \in R_d$ and identity implies $x = \sum_d x_d$ so that $R = \bigoplus_d R_d$, and the fact that $a^*$ is a ring homomorphism implies that $R_{d_1} \cdot R_{d_2} \subseteq R_{d_1+d_2}$.

\begin{lemma}\label{cone-contractible} Let $X$ be an affine scheme of finite type over a field with a $\mathbb G_m$-action. Equivalently, the ring of functions on $X$ is a graded ring.

Assume that all nonconstant homogeneous functions on $X$ have positive degree. Let $P$ be the unique $\mathbb G_m$-fixed point of $X$ (corresponding to the ideal generated by homogenous functions of positive degree). Let $K$ be a $\mathbb G_m$-invariant complex on $X$. Then \[ H^*(X, K) \cong K_{P}.\] \end{lemma}

\begin{proof} Let $x_1,\dots,x_n$ be generators of the ring of functions on $X$ of degrees $d_1,\dots,d_n$. Let $d$ be the least common multiple of $d_1,\dots,d_n$. Then $(x_1^{d/d_1},\dots, x_n^{d/d_n})$ defines a finite $\mathbb G_m$-equivariant map from $X$ to $\mathbb A^n$, where $\mathbb G_m$ acts on $\mathbb A^n$ by multiplying all coordinates by the $d$th power. Because the map is finite, and $P$ is the unique point in the inverse image of $0 \in \mathbb A^n$, both $H^*(X,K)$ and $K_p$ are preserved by pushing forward along this map, and because this map is $\mathbb G_m$-equivariant, the $\mathbb G_m$-invariance is preserved. So we can reduce to the case where $X = \mathbb A^n$.

Let $j$ be the inclusion from $\mathbb A^n -\{0\}$ to $\mathbb A^n$. From the excision exact sequence $j_! j^* K \to K \to K_p$, it suffices to prove $H^*(\mathbb A^n, j_! j^* K)=0$. Let $Y$ be the blowup of $\mathbb A^n$ at the origin, let $u\colon \mathbb A^n-\{0\}\to Y$ be the inclusion, $b\colon Y\to \mathbb A^n$ the blowup map, and $\pi\colon Y \to \mathbb P^{n-1}$ the projection onto the exceptional fiber. We have $j = b \circ u$ and $b$ is proper so 
\[ H^*(\mathbb A^n, j_! j^* K)= H^*(\mathbb A^n, b_! u_! j^* K)=  H^*(\mathbb A^n, b_* u_! j^* K)=  H^*(Y, u_! j^* K)= H^* (\mathbb P^{n-1}, \pi_* u_! j^* K).\]

Thus it suffices to show that $\pi_* u_!  K'$ is zero for a $\mathbb G_m$-equivariant sheaf $K'$ on $\mathbb A^n-\{0\}$. Locally on $\mathbb P^{n-1}$, $Y$ is an $\mathbb A^1$-bundle, $\pi$ the structure map, $u$ the inclusion of the complement of the $0$ section, and the $\mathbb G_m$ action is by multiplication by the $d$th power. To prove this vanishing, we work locally on $\mathbb P^{n-1}$, where we are in the setting of Lemma \ref{equivariant-vanishing}. We take $B$ to be an open subset of $\mathbb P^{n-1}$ where this bundle can be trivialized and let $K$ be the pullback of $K'$ along this trivialization.  By Lemma \ref{equivariant-vanishing}, $\pi_* u_! K'=0$.\end{proof}

\section{Proofs of the Axioms}\label{axioms}

We are now ready to check that the function $a$ satisfies the axioms of Theorem \ref{axiomatics}.

\begin{lemma}\label{axiom-i} If $f_1,\dots,f_{r}$ and $g_1,\dots, g_{r}$ satisfy $\gcd(f_i,g_j)=1$ for all $i$ and $j$, then we have \[ a ( f_1g_1,\dots, f_{r} g_{r}; q , \chi, M) \] \[= a(f_1,\dots,f_{r} ; q, \chi, M) a(g_1,\dots, g_{r} ; q, \chi M)  \prod_{1 \leq i \leq r} \left( \frac{f_i}{g_i} \right)_{\chi}^{M_{ii}} \left( \frac{g_i}{f_i} \right)_{\chi}^{M_{ii}}   \prod_{1 \leq i < j \leq r}  \left( \frac{f_i}{g_j} \right)_{\chi}^{M_{ij}}\left( \frac{g_i}{f_j} \right)_{\chi}^{M_{ij}} .\] \end{lemma}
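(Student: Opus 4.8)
The plan is to realize the splitting $(f_ig_i)$ geometrically. Since $\gcd(f_i,g_j)=1$ for all $i,j$, multiplication of polynomials gives an étale (indeed, a local isomorphism in a neighborhood, but étale suffices) map
\[ m\colon \prod_{i=1}^r \left( \mathbb A^{\deg f_i} \times \mathbb A^{\deg g_i}\right) \to \prod_{i=1}^r \mathbb A^{\deg f_i + \deg g_i},\]
sending a tuple $((\phi_i),(\gamma_i))$ of monic polynomials to $(\phi_i\gamma_i)$, at least on the open locus where $\gcd(\phi_i,\gamma_j)=1$; our point $((f_i),(g_i))$ lies in that locus. The first step is to compute $F_{\dots}\circ m$ on this locus: using the multiplicativity of the resultant, $\Res(f_i\gamma_i, f_j\gamma_j) = \Res(f_i,f_j)\Res(f_i,\gamma_j)\Res(\gamma_i,f_j)\Res(\gamma_i,\gamma_j)$, and using Lemma \ref{discriminant-multiplicative} for the diagonal terms $\Res((\phi_i\gamma_i)',\phi_i\gamma_i) = \Res(\phi_i',\phi_i)\Res(\gamma_i',\gamma_i)\Res(\phi_i,\gamma_i)\Res(\gamma_i,\phi_i)$. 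Collecting factors, $F\circ m$ becomes the product of $F$ in the $\phi$-variables, $F$ in the $\gamma$-variables, and a ``cross term'' $\prod_i \Res(\phi_i,\gamma_i)^{M_{ii}}\Res(\gamma_i,\phi_i)^{M_{ii}} \prod_{i<j}\Res(\phi_i,\gamma_j)^{M_{ij}}\Res(\gamma_i,\phi_j)^{M_{ij}}$, which is an \emph{invertible} function on the $\gcd=1$ locus (resultants of coprime polynomials are nonzero).

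The second step is to pass to the $IC$ sheaves. Because $m$ is étale onto its image and the $\gcd=1$ locus is smooth and open, and because on that locus the cross term $g$ above is invertible, Lemma \ref{basic-IC}(1) gives $IC_{\mathcal L_\chi(F\circ m)} \cong IC_{\mathcal L_\chi(F_\phi \cdot F_\gamma)} \otimes \mathcal L_\chi(g)$ on the locus, and Lemma \ref{basic-IC}(3) gives $IC_{\mathcal L_\chi(F_\phi\cdot F_\gamma)} \cong IC_{\mathcal L_\chi(F)}\boxtimes IC_{\mathcal L_\chi(F)}$ (the product structure of the ambient space is compatible with $m$ over the locus). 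Combining with Lemma \ref{basic-IC}(2) (étale, hence smooth, pullback — using that middle extension commutes with étale pullback up to the degree shift, which cancels since $m$ is a map between spaces of the same dimension), we get an isomorphism of stalks
\[ \left(K_{\dots}\right)_{(f_ig_i)} \cong \left(K_{\dots}\right)_{(f_i)} \otimes \left(K_{\dots}\right)_{(g_i)} \otimes \left(\mathcal L_\chi(g)\right)_{((f_i),(g_i))}\]
as Galois/Frobenius modules. Taking traces of Frobenius turns the tensor product into a product, so $a(f_ig_i) = a(f_i)\,a(g_i)\cdot \tr(\Frob, \mathcal L_\chi(g)_{((f_i),(g_i))})$.

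The third step is to identify the Frobenius trace on $\mathcal L_\chi(g)$ with the product of residue symbols in the statement. By definition $\mathcal L_\chi(h)$ has Frobenius trace $\chi(h(x))$ at an $\mathbb F_q$-point $x$ where $h(x)\in\mathbb F_q^\times$ (this is the Kummer-sheaf normalization built into the construction of $\mathcal L_\chi$); applying this to each resultant factor of $g$ and using Lemma \ref{resultant}, $\left(\frac{f}{g}\right)_\chi = \chi(\Res(f,g))$, turns $\chi(g((f_i),(g_i)))$ into exactly $\prod_i \left(\frac{f_i}{g_i}\right)_\chi^{M_{ii}}\left(\frac{g_i}{f_i}\right)_\chi^{M_{ii}}\prod_{i<j}\left(\frac{f_i}{g_j}\right)_\chi^{M_{ij}}\left(\frac{g_i}{f_j}\right)_\chi^{M_{ij}}$. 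This completes the proof.

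The main obstacle I expect is the bookkeeping in the first step — correctly matching the diagonal terms $\Res((fg)',fg)$ via Lemma \ref{discriminant-multiplicative} and checking that all the genuinely ``off-diagonal'' resultant factors land in the cross term with the right exponents $M_{ij}$ — together with the minor technical point that the multiplication map $m$ is only étale (not an isomorphism) and only on the $\gcd=1$ locus, so one must be slightly careful that stalkwise at our chosen point all the $IC$-compatibilities of Lemma \ref{basic-IC} genuinely apply (they do, since all the relevant points lie in the smooth open locus where everything in sight is lisse). None of this is deep, but it is where an error would most easily creep in.
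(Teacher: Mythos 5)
Your proposal is correct and takes essentially the same approach as the paper: pull back along the \'etale multiplication map on the coprime locus, split $F \circ m$ into the two diagonal factors and a cross term using the resultant multiplicativity of Lemma \ref{discriminant-multiplicative}, apply parts (1), (2), (3) of Lemma \ref{basic-IC} to obtain the stalk isomorphism, and identify the cross-term trace via Lemma \ref{resultant}. The one small slip is the parenthetical remark that $m$ is ``a local isomorphism in a neighborhood'' (it is merely \'etale, not Zariski-locally injective), but as you note this does not affect the argument since \'etaleness is all that is used.
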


\begin{proof} Let $d_i= \deg f_i$ and $e_i = \deg g_i$. Consider the map $\mu\colon \prod_{i=1}^{r} \mathbb A^{d_i} \times \prod_{i=1}^{r} \mathbb A^{e_i}\to \prod_{i=1}^{r} \mathbb A^{d_i+e_i} $  defined by polynomial multiplication. 

Observe that \[\Res(f_i g_i, f_j g_j) = \Res(f_i,f_j) \Res(g_i,g_j) \Res(g_i ,f_j) \Res(f_i,g_j)\] and by Lemma \ref{discriminant-multiplicative} \[ \Res ((f_ig_i)' , f_i g_i)= \Res(f_i',f_i) \Res(g_i',g_i) \Res(f_i,g_i ) \Res(g_i,f_i). \] 
Hence, letting \[ G= \prod_{1 \leq i \leq r} \Res(f_i, g_i)^{M_{ii}} \Res(g_i,f_i)^{M_{ii} }\prod_{1 \leq i < j \leq r} \Res(f_i, g_j)^{M_{ij}}  \Res(g_i,f_j)^{M_{ij}}  \] we have
 \begin{equation}\label{axiom-i-key-eq}   F(f_1g_1,\dots, f_r g_r)  =F(f_1, \dots,f_r) F(g_1,\dots,g_r) G  .\end{equation}

Let $U$ be the open set $U \subseteq \prod_{i=1}^r \mathbb A^{d_i} \times \prod_{i=1}^r \mathbb A^{e_i}$ where $\gcd(f_i,g_j)=1$ for all $i$ and $j$. Certainly, $G$ has no zeroes or poles on $U$ as the resultants are all nonvanishing on $U$.

Let us check that $\mu$ is \'{e}tale on $U$. The derivative of $\mu$ at a point $f_1,\dots, f_r, g_1,\dots, g_r$ is the linear map that sends a tangent vector $df_1,\dots, df_r, dg_1,\dots, dg_r$, where $df_i$ is a polynomial of degree $<d_i$ and $dg_i$ is a polynomial of degree $<e_i$, to the vector $ g_1 df_1 + f_1 dg_1,\dots, g_r df_r + f_r dg_r$. The derivative is injective unless there exist  polynomials $df_1,\dots, df_r, dg_1,\dots, dg_r$, not all zero, such that $g_i df_i =- f_i dg_i$ for all $i$. If $f_i$ and $g_i$ do not share a common root, this equation implies that $f_i$ divides $df_i$ which, since $\deg df_i < d_i = \deg f_i$, implies that $df_i=0$, and similarly with $dg_i$. Thus the derivative is injective on $U$. Since, restricted to $U$, $\mu$ is a map between smooth varieties of the same dimension with injective derivative, it is \'{e}tale. 

In particular, $\mu$ is smooth on $U$.  Hence by applying Lemma \ref{basic-IC}(1,2,3) to \eqref{axiom-i-key-eq}, we obtain an isomorphism  \[ \mu^* K_{d_1+e_1,\dots,d_{r} + e_{r}} \cong\left(  K_{d_1,\dots,d_{r}} \boxtimes K_{e_1,\dots,e_{r}} \right) \otimes  \mathcal L_\chi \left(G  \right)  \]  on $U$. Taking trace functions of both sides, and applying Lemma \ref{resultant} to evaluate the trace function $\chi(G)$ of $\mathcal L_\chi(G)$, we get the stated identity.\end{proof}
 
 \begin{lemma}\label{axiom-ii} $a(1,\dots, 1, f, 1,\dots,1;q,\chi,M)=1$ for all linear polynomials $f$. \end{lemma}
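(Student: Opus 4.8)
The plan is to reduce the computation of $a(1,\dots,1,f,1,\dots,1;q,\chi,M)$, for $f$ a linear polynomial sitting in the $k$-th slot, to the evaluation of the stalk of a single explicit $IC$ sheaf, and then to observe that this sheaf is just the (shifted) constant sheaf. Concretely, $a(1,\dots,1,f,\dots,1;q,\chi,M)$ is by definition the trace of Frobenius on the stalk of $K_{0,\dots,0,1,0,\dots,0}$ at the point $(1,\dots,1,f,1,\dots,1)$, where the single nonzero degree is $d_k=1$. Since all $d_i=0$ except $d_k=1$, the moduli space $\prod_i \mathbb A^{d_i}$ is just $\mathbb A^1$, parametrizing the monic linear polynomial $f=t-\alpha$, and $f$ is any point of this $\mathbb A^1$.

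First I would compute $F_{0,\dots,0,1,0,\dots,0}$ explicitly. The defining product $\prod_i \Res(f_i',f_i)^{M_{ii}}\prod_{i<j}\Res(f_i,f_j)^{M_{ij}}$ collapses: for $i\neq k$ with $d_i=0$ we have $f_i=1$, so every resultant involving $f_i$ equals $1$; and for the single remaining term, $\Res(f_k',f_k)=\Res(1,t-\alpha)=1$ since $f_k$ is linear so $f_k'$ is the constant $1$. Hence $F_{0,\dots,0,1,0,\dots,0}$ is identically $1$ on $\mathbb A^1$. Therefore $\mathcal L_\chi(F)=\mathcal L_\chi(1)$ is the constant sheaf $\Ql$ on all of $\mathbb A^1$ (the locus $U$ where $F$ is invertible is the whole smooth curve $\mathbb A^1$), and $IC_{\mathcal L_\chi(F)}$ is the middle extension of $\Ql[1]$ from $\mathbb A^1$ to $\mathbb A^1$, shifted back by $[-1]$, i.e. it is just $\Ql$ on $\mathbb A^1$. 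The stalk at any point is $\Ql$ in degree $0$, with Frobenius acting trivially, so the trace is $1$.

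The only subtlety — and the step to be careful about rather than the real obstacle — is making sure the $d_i=0$ factors genuinely contribute nothing. When some $d_i=0$, $\mathbb A^{d_i}$ is a point, $f_i$ is the constant polynomial $1$, and one should check that the construction of $K_{d_1,\dots,d_r}$ behaves well with respect to these trivial factors; but this follows immediately from Lemma \ref{basic-IC}(3) together with the observation that $IC_{\mathcal L_\chi(1)}$ on a point is just $\Ql$. Alternatively one can invoke axiom (1) / Lemma \ref{axiom-i} already proved, or simply observe directly that $F_{0,\dots,0,1,0,\dots,0}=1$ as computed above and run the $IC$ construction on $\mathbb A^1$ directly. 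I expect no genuine difficulty here; the statement is essentially a sanity check that the normalization is consistent, and the one-line heart of it is that a linear polynomial has constant derivative, killing the only potentially nontrivial resultant.
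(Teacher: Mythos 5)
Your proof is correct and is essentially the paper's own argument: since $f$ is linear so $f'=1$ and all the other $f_i=1$, every resultant in the definition of $F$ equals $1$, so $F\equiv 1$, $K_{0,\dots,0,1,0,\dots,0}$ is the constant sheaf $\Ql$ on $\mathbb A^1$, and the trace of Frobenius on its stalk is $1$. The extra care you take over the $d_i=0$ factors is sound but not needed; the paper simply observes that all resultants are $1$ and concludes.
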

 
 \begin{proof} In this case, all resultants and discriminants are $1$, so $F=1$, thus $ K_{0,\dots, 0, 1, 0,\dots, 0}$ is the constant sheaf $\Ql$, hence its trace function is $1$. \end{proof}
 
To check the remaining axioms, it will be useful to describe the translation and dilation symmetries of the function $F$.

 \begin{lemma}\label{dilation-invariance}
 
 \begin{enumerate}
 
 \item We have \[ F( \lambda^{d_1} f_1(x/\lambda),\dots, \lambda^{d_r} f_r(x/\lambda)) =\lambda^{ \sum_{i=1}^r d_i (d_i-1) M_{ii} + \sum_{1 \leq i < j \leq r} d_i d_j M_{ij} } F( f_1,\dots, f_r) . \]
 
 \item All nonconstant polynomials on $\prod_{i=1}^{r} \mathbb A^{d_i} $ which are homogeneous for the action of $\mathbb G_m$ on $\prod_{i=1}^{r} \mathbb A^{d_i} $  which acts by dilation of polynomials, i.e. $f_i \to \lambda^{d_i} f_i (x/\lambda)$, have positive degree in $\lambda$.
 
 \end{enumerate}
 
 \end{lemma}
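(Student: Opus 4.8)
The plan is to prove (1) by a direct computation with the defining formula for $F$, tracking how each resultant factor scales under the substitution $f_i(x) \mapsto \lambda^{d_i} f_i(x/\lambda)$, and then to deduce (2) as an immediate consequence.

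For part (1), write $\tilde f_i(x) = \lambda^{d_i} f_i(x/\lambda)$. If $f_i$ has roots $\alpha_1^{(i)},\dots,\alpha_{d_i}^{(i)}$, then $\tilde f_i$ is again monic of degree $d_i$ with roots $\lambda \alpha_1^{(i)}, \dots, \lambda \alpha_{d_i}^{(i)}$. Using the root formula $\Res(f_i,f_j) = \prod_{k,l}(\alpha_l^{(j)} - \alpha_k^{(i)})$ from the proof of Lemma \ref{reciprocity}, I would observe that replacing each root by $\lambda$ times itself multiplies $\Res(f_i,f_j)$ by $\lambda^{d_i d_j}$. Similarly, since $\tilde f_i' (x) = \lambda^{d_i - 1} f_i'(x/\lambda)$ has the same roots scaled by $\lambda$, and using the root formula $\Res(f_i',f_i) = \prod_{k \neq l}(\alpha_k^{(i)} - \alpha_l^{(i)})$ from the proof of Lemma \ref{mobius-evaluation}, the factor $\Res(\tilde f_i', \tilde f_i)$ equals $\lambda^{d_i(d_i-1)}\Res(f_i',f_i)$ (there are $d_i(d_i-1)$ factors in that product). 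Substituting into
\[ F_{d_1,\dots,d_r}= \prod_{i=1}^r \Res(f_i', f_i)^{M_{ii}} \prod_{1\leq i< j \leq r} \Res(f_i, f_j)^{M_{ij}} \]
collects exactly the exponent $\sum_{i=1}^r d_i(d_i-1)M_{ii} + \sum_{1\leq i<j\leq r} d_i d_j M_{ij}$, which is the claimed power of $\lambda$. (One should also note that this computation is valid identically in the $f_i$, e.g. over the coordinate ring, not just at points with distinct roots, since both sides are polynomials agreeing on a dense set; alternatively one can cite the standard homogeneity of the resultant in the roots.)

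For part (2), the action $f_i \mapsto \lambda^{d_i} f_i(x/\lambda)$ on $\prod_{i=1}^r \mathbb A^{d_i}$ scales the coefficient of $x^{d_i - m}$ in $f_i$ by $\lambda^m$. Thus the coordinate of weight $0$ is only the leading coefficient, which is fixed at $1$, so there are no nonconstant coordinate functions of weight $\leq 0$; every monomial in the remaining coordinates has strictly positive weight, and hence every nonconstant $\mathbb G_m$-homogeneous polynomial does too. I expect no real obstacle here: part (1) is a bookkeeping exercise in counting factors in the resultant product formulas, and part (2) is essentially the observation that the only $\mathbb G_m$-fixed data is the (fixed) leading coefficient. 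The one point requiring a word of care is making the root-counting argument in (1) rigorous as a polynomial identity rather than merely a pointwise one, which the density argument or the classical weighted-homogeneity of resultants handles cleanly.
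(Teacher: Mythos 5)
Your proof is correct and follows essentially the same approach as the paper: for (1), the paper likewise argues from the root-product formula for resultants (dilation multiplies each root by $\lambda$, hence each difference of roots by $\lambda$, so a product of $N$ such differences scales by $\lambda^N$), and for (2), the paper notes the coordinate ring is generated by the coefficients of the $f_i$, all of positive weight. Your extra remarks about the $d_i(d_i-1)$ and $d_id_j$ factor counts and the polynomial-identity caveat are just the bookkeeping the paper leaves implicit.
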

 
 \begin{proof} (1) follows from the definition of the resultant of two monic polynomials as a product of differences of their roots, since dilation multiplies each root by $\lambda$, hence each difference of roots by $\lambda$, and thus a product of $N$ differences of roots by $\lambda^N$.
 
 (2) holds because the ring of functions is generated by the coefficients of $f_i$, which all have positive degree in $\lambda$.\end{proof}
 
 \begin{lemma}\label{translation-invariance} The complex $K_{d_1,\dots, d_r}$ is invariant under the action of $\mathbb G_a$ on $\prod_{i=1}^r \mathbb A^{d_i}$ given by $( (f_1,\dots, f_r), \alpha)  \mapsto  ( f_1(T+\alpha),\dots, f_r(T+\alpha))$. \end{lemma}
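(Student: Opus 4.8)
The plan is to reduce the statement to the translation-invariance of the polynomial $F_{d_1,\dots,d_r}$ together with the compatibility of $IC_{\mathcal L_\chi(-)}$ with smooth pullback recorded in Lemma \ref{basic-IC}(2). First I would note that the translation action is well-defined on the moduli space of tuples of monic polynomials: if $f_i$ is monic of degree $d_i$ then so is $f_i(T+\alpha)$, so we do get a morphism $a\colon \prod_{i=1}^r \mathbb A^{d_i} \times \mathbb G_a \to \prod_{i=1}^r \mathbb A^{d_i}$, and moreover $a$ factors as the automorphism $((f_i),\alpha)\mapsto ((f_i(T+\alpha)),\alpha)$ of $\prod_{i=1}^r \mathbb A^{d_i}\times \mathbb G_a$ followed by the projection $p\colon \prod_{i=1}^r \mathbb A^{d_i}\times \mathbb G_a \to \prod_{i=1}^r \mathbb A^{d_i}$; in particular $a$ is smooth, being the composite of an isomorphism with an affine-space bundle projection.

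Next I would check that $F_{d_1,\dots,d_r}$ is translation-invariant, i.e. $F_{d_1,\dots,d_r}\circ a = F_{d_1,\dots,d_r}\circ p$ as functions on $\prod_{i=1}^r \mathbb A^{d_i}\times \mathbb G_a$. This is immediate from the description of the resultant as a product of differences of roots used in Lemmas \ref{reciprocity} and \ref{mobius-evaluation}: replacing $f_i(T)$ by $f_i(T+\alpha)$ shifts every root of every $f_i$ by the same constant $-\alpha$, hence leaves every difference of roots unchanged, so $\Res(f_i,f_j)$ is unchanged; and since $\bigl(f_i(T+\alpha)\bigr)' = f_i'(T+\alpha)$, the discriminantal factor $\Res(f_i',f_i)$ (which is, up to sign, a product of squared differences of roots of $f_i$) is unchanged as well. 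Thus every factor of $F_{d_1,\dots,d_r}$, and therefore $F_{d_1,\dots,d_r}$ itself, is invariant.

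Finally I would combine these observations. Applying Lemma \ref{basic-IC}(2) to the smooth map $a$ gives $a^* K_{d_1,\dots,d_r} \cong IC_{\mathcal L_\chi(F_{d_1,\dots,d_r}\circ a)}$, and applying it to the smooth map $p$ gives $p^* K_{d_1,\dots,d_r} \cong IC_{\mathcal L_\chi(F_{d_1,\dots,d_r}\circ p)}$; since $F_{d_1,\dots,d_r}\circ a = F_{d_1,\dots,d_r}\circ p$ by the previous paragraph, these are isomorphic, and $p^* K_{d_1,\dots,d_r} \cong K_{d_1,\dots,d_r}\boxtimes \mathbb Q_\ell$ (either directly, or from Lemma \ref{basic-IC}(3) with the function $1$ on $\mathbb G_a$). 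This is exactly the asserted $\mathbb G_a$-invariance, in the same sense as in Lemma \ref{scale-invariance}. I do not expect a genuine obstacle: the only points needing a moment of care are that the translation action preserves the locus of monic polynomials of the prescribed degrees and that $a$ is smooth, both of which are handled by the factorization of $a$ through the automorphism and projection.
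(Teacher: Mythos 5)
Your proposal is correct and follows essentially the same route as the paper: both reduce the claim to the translation-invariance of $F_{d_1,\dots,d_r}$ (which you verify via the description of resultants as products of differences of roots, and which the paper calls immediate) and then invoke Lemma \ref{basic-IC}(2). You simply supply the smoothness and factorization details that the paper leaves implicit.
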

 
 \begin{proof} This follow from Lemma \ref{basic-IC} and the identity \[ F_{d_1,\dots, d_r}  ( f_1(T+\alpha),\dots, f_r(T+\alpha))= F_{d_1,\dots, d_r}( f_1,\dots, f_r) \] which is immediate from the definition of $F_{d_1,\dots,d_r}$.\end{proof}


 For each finite field $\mathbb F_q$, character $\chi$, and natural numbers $d_1,\dots, d_r$, let $J(d_1,\dots, d_r; q, \chi, M)$ be the finite set of ordered pairs of Weil numbers and integers given by the eigenvalues of $\Frob_q$ on the stalk of $K_{d_1,\dots, d_r}$ at $(T^{d_1},\dots, T^{d_r})$, together with their signed multiplicities.
 
 \begin{lemma}\label{compatibility} For each tuple of natural numbers $d_1,\dots, d_r$, the function $q,\chi \mapsto J(d_1,\dots, d_r; q, \chi, M)$ is a compatible system of sets of ordered pairs of Weil numbers and integers. \end{lemma}
 
 \begin{proof}   Let $\mathbb F_q$ be a finite field, $\chi$ a character, $\mathbb F_{q^m}$ a field extension, and $\chi_m$ the composition of $\chi$ with the norm map. We can construct  $K_{d_1,\dots, d_r} $ on $\prod_{i=1}^r \mathbb A^{d_i}_{\mathbb F_q}$ using the character $\chi$ and then pull back to $\prod_{i=1}^r \mathbb A^{d_i}_{\mathbb F_{q^m}}$, or we can construct the sheaf directly on $\prod_{i=1}^r \mathbb A^{d_i}_{\mathbb F_{q^m}}$  using the character $\chi_m$.  These two sheaves are naturally isomorphic because the $\frac{q^m-1}{q-1}$th-power map $\mu_{q^m-1} \to \mu_{q-1}$ used to compare the Kummer sheaves matches the norm map $\mathbb F_{q^m}^\times \to \mathbb F_q^\times$ given by \[N(x) = x \cdot \Frob_q(x) \cdot \ldots \cdot \Frob_q^{m-1}(x) =x \cdot x^q \cdot \ldots \cdot x^{q^{m-1}} = x^{  \frac{q^m-1}{q-1}} \] used to convert $\chi$ to $\chi_m$, and because forming the intermediate extension commutes with change of base field. 
 
 The elements of $J(d_1,\dots, d_r; q^m, \chi_m, M)$ are the eigenvalues of $\operatorname{Frob}_{q^m}$, with multiplicity, on the stalk of $K_{d_1,\dots, d_r}$, constructed over $\mathbb F_{q^m}$ using $\chi_m$, and hence also the eigenvalues of $\operatorname{Frob}_{q^m}$ on the stalk of $K_{d_1,\dots,d_r}$, constructed over $\mathbb F_q$ and pulled back to $\mathbb F_{q^m}$. Since $\operatorname{Frob}_{q^m} = \operatorname{Frob}_q^m$, the elements of $J(d_1,\dots, d_r; q^m, \chi_m, M)$ are the $m$'th powers, with multiplicity, of the eigenvalues of $\operatorname{Frob}_q$ on the stalk of $K_{d_1,\dots,d_r}$, i.e. the $m$'th powers of the elements of $J(d_1,\dots, d_r; q,\chi, M)$, as desired. \end{proof}

 \begin{lemma}\label{axiom-iii}  For each finite field $\mathbb F_q$, character $\chi$, natural numbers $d_1,\dots, d_r$, and prime polynomial $\pi$ over $\mathbb F_q$, we have
    \begin{equation}\label{eq-axiom-iii} a(\pi^{d_1},\dots, \pi^{d_r}; q, \chi, M) =\left(\frac{\pi'}{\pi}\right)_{\chi} ^{ \sum_{i=1}^r { d_i M_{ii}} }   \sum_{j \in J(d_1,\dots,d_r; q,\chi, M)}  c_j \alpha_j^{\deg \pi}  .\end{equation} \end{lemma}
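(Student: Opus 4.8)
The plan is to reduce the computation of the stalk of $K_{\pi^{d_1},\dots,\pi^{d_r}}$ at $(\pi^{d_1},\dots,\pi^{d_r})$ to the stalk of $K_{d_1,\dots,d_r}$ at $(T^{d_1},\dots,T^{d_r})$, up to an explicit power of a residue symbol, and then identify the trace of Frobenius on the latter stalk with the sum $\sum_{j} c_j \alpha_j^{\deg \pi}$ using the definition of $J(d_1,\dots,d_r;q,\chi,M)$ and compatibility of trace functions with base change. First I would handle the case $\deg \pi = 1$, where after the translation symmetry of Lemma~\ref{translation-invariance} we may assume $\pi = T$, so that $(\pi^{d_1},\dots,\pi^{d_r}) = (T^{d_1},\dots,T^{d_r})$ and the trace of Frobenius on the stalk is, by the definition of $J(d_1,\dots,d_r;q,\chi,M)$ in terms of the eigenvalues of $\Frob_q$ on that stalk with signed multiplicities, exactly $\sum_{j} c_j \alpha_j$; here $\left(\frac{\pi'}{\pi}\right)_\chi = \left(\frac{1}{T}\right)_\chi = 1$, so \eqref{eq-axiom-iii} holds.

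For general $\pi$ of degree $D = \deg \pi$, the key geometric input is the Weil restriction. Let $k' = \mathbb F_{q^D}$ and let $\alpha \in k'$ be a root of $\pi$. Evaluation of a monic polynomial of degree $d$ over $\mathbb F_q$ at $\alpha$, or better, the substitution $T \mapsto T$ followed by reduction, is not quite what is needed; instead I would use that the map sending a monic polynomial $f$ over $\mathbb F_q$ of degree $d$ to its factor at $\pi$ (i.e.\ the Taylor expansion of $f$ around the point $\pi$, which for $f = \pi^d$ is $T^d$ in the local coordinate) is an instance of the following: Weil restriction $WR_{k'}^{\mathbb F_q}$ of the space $\mathbb A^d$ of monic degree-$d$ polynomials over $k'$ contains the $\mathbb F_q$-space $\mathbb A^d$ via the map $f \mapsto$ (the polynomial $f(T+\alpha)$ truncated mod $T^d$, viewed over $k'$), and under this map the polynomial $F_{d_1,\dots,d_r}$ over $\mathbb F_q$ pulls back, on the locus supported entirely at $\pi$, to the norm from $k'$ to $\mathbb F_q$ of the function $F_{d_1,\dots,d_r}$ over $k'$, up to the contribution of $\Res(\pi',\pi)$-type factors, which is precisely the source of the residue symbol $\left(\frac{\pi'}{\pi}\right)_\chi^{\sum_i d_i M_{ii}}$. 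Then Lemma~\ref{norms} identifies $WR_{k'}^{\mathbb F_q} K_{d_1,\dots,d_r}$ (over $k'$) with $IC_{\mathcal L_\chi(N F_{d_1,\dots,d_r})}$, and Lemma~\ref{weil-restriction-trace-function} says the trace of $\Frob_q$ on its stalk at the $\mathbb F_q$-point corresponding to $(T^{d_1},\dots,T^{d_r})$ equals the trace of $\Frob_{q^D}$ on the stalk of $K_{d_1,\dots,d_r}$ (over $k'$) at that point.

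The final step is to observe that the trace of $\Frob_{q^D}$ on the stalk of $K_{d_1,\dots,d_r}$ at $(T^{d_1},\dots,T^{d_r})$ over $\mathbb F_{q^D}$ equals $\sum_{j \in J(d_1,\dots,d_r;q^D,\chi_D,M)} c_j \alpha_j$, and that by the compatibility of the formation of $K_{d_1,\dots,d_r}$ with base field extension, the eigenvalues of $\Frob_{q^D}$ on this stalk are the $D$-th powers of the eigenvalues of $\Frob_q$ on the stalk over $\mathbb F_q$ (with the same signed multiplicities) — that is, $J(d_1,\dots,d_r;q,\chi,M)$ is a compatible system of sets of ordered pairs, and $\sum_{j \in J(d_1,\dots,d_r;q^D,\chi_D,M)} c_j \alpha_j = \sum_{j \in J(d_1,\dots,d_r;q,\chi,M)} c_j \alpha_j^D$. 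Combining this with the residue-symbol factor from the norm computation yields \eqref{eq-axiom-iii}. I expect the main obstacle to be the careful bookkeeping in the middle step: setting up precisely the right Weil-restriction map, checking that on the relevant closed locus the pulled-back polynomial $F$ really is the norm of $F$ times exactly the stated power of $\Res(\pi',\pi)$ (the discriminant terms $\Res(f_i',f_i)$ transform with an extra factor measuring the ramification of $\pi$, which is what produces $\left(\frac{\pi'}{\pi}\right)_\chi$), and ensuring the identification of stalks respects the $\mathbb G_m$-conical structure so that Lemma~\ref{cone-contractible} is not actually needed here but the local picture is still consistent.
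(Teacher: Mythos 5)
Your high-level plan matches the paper's: reduce to $\pi=T$ by translation invariance (Lemma~\ref{translation-invariance}), handle general $\pi$ by Weil restriction, and combine Lemma~\ref{norms}, Lemma~\ref{weil-restriction-trace-function}, and the compatibility of $J$ to turn the trace of $\Frob_q$ on a Weil-restricted stalk into $\sum_j c_j\alpha_j^{\deg\pi}$. That part is on target.

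There are two problems in the middle, one expositional and one a genuine gap. First, your description of the relevant map is backwards. The paper's map goes from the Weil restriction to the original space: $norm\colon WR_{\mathbb F_{q^e}}^{\mathbb F_q}\prod_i\mathbb A^{d_i}\to\prod_i\mathbb A^{e d_i}$, and the key fact is that $((T-x)^{d_1},\dots,(T-x)^{d_r})$ maps to $(\pi^{d_1},\dots,\pi^{d_r})$ under it, so one compares $norm^*K_{ed_1,\dots,ed_r}$ with $WR_{\mathbb F_{q^e}}^{\mathbb F_q}K_{d_1,\dots,d_r}$. Your map $f\mapsto f(T+\alpha)\bmod T^d$ does not land in the space of monic degree-$d$ polynomials over $k'$ at all, and even as a heuristic it is the wrong direction; this makes the middle section of your argument incoherent as written.

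Second, and more seriously, the exponent of the residue symbol is wrong as stated. The ratio $F(Nf_1,\dots,Nf_r)/NF(f_1,\dots,f_r)$ at the relevant point computes to $\Res(\pi',\pi)^{\sum_i M_{ii}d_i^2+\sum_{i<j}M_{ij}d_id_j}$, not $\Res(\pi',\pi)^{\sum_i M_{ii}d_i}$. Getting from the first exponent to the second is a genuine extra step: the paper invokes Lemma~\ref{dilation-invariance} and Lemma~\ref{scale-invariance} to show that unless $\sum_i M_{ii}d_i(d_i-1)+\sum_{i<j}M_{ij}d_id_j\equiv 0\bmod n$, the stalk of $K_{d_1,\dots,d_r}$ at $((T-x)^{d_1},\dots,(T-x)^{d_r})$ vanishes; when it does not vanish, the two exponents agree modulo $n$ and the residue symbol factors coincide. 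You assert the final exponent directly and even remark that the $\mathbb G_m$-conical structure ``is not actually needed here,'' but it is precisely the $\mathbb G_m$-equivariance of $K_{d_1,\dots,d_r}$ that closes this gap. Also note that both the $\Res(f_i',f_i)$ terms and the $\Res(f_i,f_j)$ terms contribute $\Res(\pi',\pi)$-powers in the norm comparison (equations~\eqref{resultant-norm} and~\eqref{discriminant-norm}), not just the discriminant terms as your parenthetical suggests.
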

    
   \begin{proof}This follows from the definition when $\pi=T$, and then follows from Lemma \ref{translation-invariance} when $\pi= T-x$ for $x\in \mathbb F_q$.

Let us handle the case when $\pi$ has a higher degree. To do this, let $e$ be the degree of $\pi$, and consider the Weil restriction $WR_{\mathbb F_{q^e}}^{\mathbb F_q} \prod_{i=1}^{r} \mathbb A^{d_i}$ of $\prod_{i=1}^{r} \mathbb A^{d_i}$ from $\mathbb F_{q^e}$ to $\mathbb F_{q}$. This Weil restriction admits a map $norm$ to $\prod_{i=1}^{r} \mathbb A^{ed_i}$ given by taking norms of polynomials. For $x$ a root of $\pi$, the image of $((T-x)^{d_1},\dots,(T-x)^{d_{r}})$ under $norm$ is $(\pi^{d_1},\dots, \pi^{d_{r}})$. Thus
\begin{equation}\label{axiom-iii-left} \begin{split}& a(\pi^{d_1},\dots, \pi^{d_r}; q, \chi, M)  = \operatorname{tr} \left(\Frob_q,  (K_{en_1,\dots, en_{r}})_{ (\pi^{d_1},\dots, \pi^{d_r})} \right) \\ & =  \operatorname{tr} \left(\Frob_q,  (norm^* K_{en_1,\dots, en_{r}})_{ ((T-x) ^{d_1},\dots, (T-x) ^{d_r})} \right).\end{split}  \end{equation}

On the other hand, by Lemma \ref{weil-restriction-trace-function},
\begin{equation} \label{axiom-iii-right} \begin{split} & \operatorname{tr}  \left(( \Frob_q,   ( WR_{\mathbb F_{q^e}}^{\mathbb F_q} K_{d_1,\dots,d_{r}} )_{ ((T-x)^{d_1},\dots,(T-x)^{d_{r}})}\right) \\ & = \operatorname{tr} \left( \Frob_{q^e}, (K_{d_1,\dots, d_{r}})_{ ((T-x)^{d_1},\dots,(T-x)^{d_{r}})} \right)= \sum_{j \in J(d_1,\dots,d_r; q,\chi, M)}  c_j \alpha_j^{\deg \pi}. \end{split}
\end{equation} 

To finish the argument, we will compare the stalks of  $WR_{\mathbb F_{q^e}}^{\mathbb F_q}K_{d_1,\dots,d_{r}}$ and $norm^* K_{d_1e,\dots, d_re}$ at $((T-x)^{d_1},\dots,(T-x)^{d_{r}})$. To do this, note from Lemma \ref{norms} that \begin{equation}\label{axiom-iii-WR-iC} WR_{\mathbb F_{q^e}}^{\mathbb F_q}K_{d_1,\dots,d_{r}} \cong IC_{\mathcal L_\chi (NF (f_1,\dots, f_r))}.\end{equation}

The restriction of $norm$ to the open set where none of the polynomials share any roots with their Galois conjugates is \'{e}tale, so \[norm^* IC_{\mathcal L_\chi ( F_{d_1e,\dots,d_{r}e})} = IC_{\mathcal L_\chi (F_{d_1e,\dots,d_{r}e} \circ norm)}\]  by Lemma \ref{basic-IC}(2).  Let $N$ be the norm map on polynomials. By definition, we have identities
  \begin{equation} \label{norm-ratio-numerator} F(N f_1,\dots, N f_r)  = \prod_{1 \leq i \leq r} \left(   \Res ( (N f_i)', Nf_i)  \right)^{M_{ii}}  \prod_{1 \leq i<j\leq r} \left( \Res (N f_i, N f_j)\right)^{M_{ij}} \end{equation}
  \begin{equation} \label{norm-ratio-denominator} N F(f_1,\dots, f_r) = \prod_{1 \leq i \leq r} \left(  N \Res (f_i',f_i) \right)^{M_{ii}}  \prod_{1 \leq i<j\leq r} \left( N \Res (f_i, f_j) \right)^{M_{ij}} .\end{equation}

For compactness, we will here write $\FFrob_q$ for $\Frob_q$. The multiplicativity of the resultant gives \begin{equation}\label{resultant-norm-1} \Res (N f_i, N f_j)  =N \Res (f_i, f_j)  \prod_{\substack{ 0 \leq t_1, t_2 \leq e-1 \\ t_1 \neq t_2} }  \Res( \FFrob_q^{t_1} f_i, \FFrob_q^{t_2} f_j) \end{equation} and the multiplicativity property of $\Res(f',f)$ gives  \begin{equation}\label{discriminant-norm-1} \Res ((N f_i)', N f_i)= N \Res (f_i', f_i)   \prod_{\substack{ 0 \leq t_1, t_2 \leq e-1 \\ t_1 \neq t_2} }  \Res( \FFrob_q^{t_1} f_i, \FFrob_q^{t_2} f_i).  \end{equation}

Plugging \eqref{resultant-norm-1} and \eqref{discriminant-norm-1} into \eqref{norm-ratio-numerator} and then applying \eqref{norm-ratio-denominator} we obtain
\begin{equation}\label{norm-poly} \begin{split} & F(N f_1,\dots, N f_r) \\ &= N F(f_1,\dots, f_r) \prod_{1 \leq i \leq r}  \Bigl(\prod_{\substack{ 0 \leq t_1, t_2 \leq e-1 \\ t_1 \neq t_2} }  \Res( \FFrob_q^{t_1} f_i, \FFrob_q^{t_2} f_i)\Bigr)^{M_{ii}}  \prod_{1 \leq i<j\leq r} \Bigl( \prod_{\substack{ 0 \leq t_1, t_2 \leq e-1 \\ t_1 \neq t_2} }  \Res( \FFrob_q^{t_1} f_i, \FFrob_q^{t_2} f_j)\Bigr)^{M_{ij} } . \end{split} \end{equation}
 Specializing to $f_i = (T-x)^{d_i}$ we have \begin{equation}\label{resultant-norm} \begin{split} & \prod_{\substack{ 0 \leq t_1, t_2 \leq e-1 \\ t_1 \neq t_2} }  \Res( \FFrob_q^{t_1} f_i, \FFrob_q^{t_2} f_j)  = \prod_{\substack{ 0 \leq t_1, t_2 \leq e-1 \\ t_1 \neq t_2} }   \Res( \FFrob_q^{t_1} (T-x)^{d_i} , \FFrob_q^{t_2} (T-x)^{d_j} )\\  = &\prod_{\substack{ 0 \leq t_1, t_2 \leq e-1 \\ t_1 \neq t_2}  } (\FFrob_q^{t_2}  x - \FFrob_q^{t_1} x)^{d_i d_j} = \Res(\pi', \pi)^{d_id_j}.\end{split}\end{equation}

By similar logic \begin{equation}\label{discriminant-norm} \prod_{\substack{ 0 \leq t_1, t_2 \leq e-1 \\ t_1 \neq t_2} }  \Res( \FFrob_q^{t_1} f_i, \FFrob_q^{t_2} f_i) = \Res(\pi', \pi)^{d_i^2}.\end{equation}

In combination, \eqref{norm-poly},  \eqref{resultant-norm}, and \eqref{discriminant-norm} demonstrate that $F(N f_1,\dots, N f_r)$ is equal to $N F(f_1,\dots, f_r) $ times a polynomial function whose value at $((T-x)^{d_1},\dots,(T-x)^{d_{r}})$ is 
\begin{equation}\label{big-norm-comparison}  \Res(\pi',\pi)^{ \sum_{i=1}^r M_{ii}    d_i^2  + \sum_{1 \leq i< j \leq r} M_{ij} d_i d_j }\neq 0. \end{equation} 
Thus, by Lemma \ref{basic-IC}(1) and \eqref{big-norm-comparison} we have
\begin{equation}\label{axiom-iii-presimplified} \begin{split} & \operatorname{tr} \left(\Frob_q,  (norm^* K_{en_1,\dots, en_{r}})_{ ((T-x) ^{d_1},\dots, (T-x) ^{d_r})} \right) \\ =&  \operatorname{tr}  \left(( \Frob_q,   ( WR_{\mathbb F_{q^e}}^{\mathbb F_q} K_{d_1,\dots,d_{r}} )_{ ((T-x)^{d_1},\dots,(T-x)^{d_{r}})}\right)\left(\frac{\pi'}{\pi}\right)_{\chi} ^{\sum_{i=1}^r M_{ii}    d_i^2  + \sum_{1 \leq i< j \leq r} M_{ij} d_i d_j } . \end{split}\end{equation}
By Lemma \ref{dilation-invariance} and Lemma \ref{scale-invariance}, unless \begin{equation}\label{simplifying-congruence} \sum_{i=1}^r M_{ii}    d_i (d_i-1)  + \sum_{1 \leq i< j \leq r} M_{ij} d_i d_j  \equiv 0 \bmod \nchi, \end{equation} the stalk of $K_{d_1,\dots,d_{r}}$ at $((T-x)^{d_1},\dots, (T-x)^{d_r})$ vanishes, which by \eqref{axiom-iii-right} means the right side of \eqref{axiom-iii-presimplified} vanishes, so the left side of \eqref{axiom-iii-presimplified} vanishes as well. It follows that 
\begin{equation}\label{axiom-iii-simplified} \begin{split} &\operatorname{tr} \left(\Frob_q,  (norm^* K_{en_1,\dots, en_{r}})_{ ((T-x) ^{d_1},\dots, (T-x) ^{d_r})} \right) \\ = &   \operatorname{tr}  \left(( \Frob_q,   ( WR_{\mathbb F_{q^e}}^{\mathbb F_q} K_{d_1,\dots,d_{r}} )_{ ((T-x)^{d_1},\dots,(T-x)^{d_{r}})}\right)\left(\frac{\pi'}{\pi}\right)_{\chi} ^{\sum_{i=1}^r M_{ii}    d_i }\end{split}. \end{equation}
since if \eqref{simplifying-congruence} is satisfied we may subtract $ \sum_{i=1}^r M_{ii}    d_i (d_i-1)  + \sum_{1 \leq i< j \leq r} M_{ij} d_i d_j $ from the exponent of $\left(\frac{\pi'}{\pi}\right)_{\chi} $ without changing the value because $\left(\frac{\pi'}{\pi}\right)_{\chi} $ is an $\nchi$'th root of unity, and if \eqref{simplifying-congruence} is not satisfied, then both sides are zero.

Combining \eqref{axiom-iii-left}, \eqref{axiom-iii-simplified}, and \eqref{axiom-iii-right}, we obtain \eqref{eq-axiom-iii}.
\end{proof}

    \begin{lemma}\label{axiom-iv}
 For each finite field $\mathbb F_q$, character $\chi$, natural numbers $d_1,\dots, d_r$, setting $d=\sum_{i=1}^r d_i$ we have
 \[ \lambda(d_1,\dots, d_r, q^m,\chi_m, M) = \sum_{j \in J(d_1,\dots,d_r; q,\chi, M)}  c_j \frac{ q^{ d}}{\overline{\alpha}_j } .\]\end{lemma}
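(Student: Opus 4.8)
The plan is to compute $\lambda(d_1,\dots,d_r;q^m,\chi_m,M)$ by the Grothendieck--Lefschetz trace formula and then to exploit the $\mathbb G_m$-action via the cone contractibility lemma together with Verdier duality. First I would recall that $\lambda(d_1,\dots,d_r;q^m,\chi_m,M) = \sum_{f_1,\dots,f_r} a(f_1,\dots,f_r;q^m,\chi_m,M)$, where the sum runs over monic polynomials of degrees $d_1,\dots,d_r$ over $\mathbb F_{q^m}$, i.e.\ over the $\mathbb F_{q^m}$-points of $X := \prod_{i=1}^r \mathbb A^{d_i}$. By definition of $a$ as the trace of Frobenius on the stalk of $K_{d_1,\dots,d_r} = IC_{\mathcal L_\chi(F_{d_1,\dots,d_r})}$, the Lefschetz fixed point formula gives
\[ \lambda(d_1,\dots,d_r;q^m,\chi_m,M) = \sum_{i} (-1)^i \operatorname{tr}\left( \Frob_{q}^m, H^i_c(X_{\overline{\mathbb F}_q}, K_{d_1,\dots,d_r}) \right). \]
(One must be slightly careful with the shift conventions, since $K$ is the middle extension shifted to sit generically in degree zero, not the perverse middle extension; I will track the shift by $d = \sum_i d_i$ throughout.)

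Next I would use the $\mathbb G_m$-action. By Lemma~\ref{dilation-invariance}(2), $X = \prod_i \mathbb A^{d_i}$ is an affine space on which $\mathbb G_m$ acts by dilation of polynomials, all nonconstant homogeneous functions have positive weight, and the unique fixed point is $P = (T^{d_1},\dots,T^{d_r})$. By Lemma~\ref{dilation-invariance}(1), $F_{d_1,\dots,d_r}$ is $\mathbb G_m$-homogeneous of weight $w = \sum_i d_i(d_i-1)M_{ii} + \sum_{i<j} d_id_j M_{ij}$; composing the action with the $n$th power map makes the weight divisible by $n$, so by Lemma~\ref{scale-invariance} the complex $K_{d_1,\dots,d_r}$ is $\mathbb G_m$-invariant for this reparametrized action. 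Lemma~\ref{cone-contractible} then gives $H^*(X, K_{d_1,\dots,d_r}) \cong (K_{d_1,\dots,d_r})_P$, the stalk at $P$, whose Frobenius eigenvalues are exactly $J(d_1,\dots,d_r;q,\chi,M)$ by definition of $J$. So the \emph{ordinary} cohomology has Frobenius trace $\sum_{j} c_j \alpha_j^m$ (using that $J$ is compatible, so passing to $\mathbb F_{q^m}$ replaces $\alpha_j$ by $\alpha_j^m$), placed in the appropriate degree.

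To pass from ordinary cohomology to compactly supported cohomology I would invoke Verdier duality. Since $X$ has dimension $d$, Lemma~\ref{basic-IC}(4) gives $D K_{d_1,\dots,d_r} = IC_{\mathcal L_{\chi^{-1}}(F_{d_1,\dots,d_r})}[2d](d)$, and Poincar\'e--Verdier duality identifies $H^i_c(X, K)$ with the dual of $H^{-i}(X, DK)$. Running the same cone argument for $\chi^{-1}$ in place of $\chi$, the ordinary cohomology $H^*(X, IC_{\mathcal L_{\chi^{-1}}(F)})$ is the stalk at $P$, whose Frobenius eigenvalues form $J(d_1,\dots,d_r;q,\chi^{-1},M)$; but the defining construction of $J$ via stalks is insensitive to replacing $\chi$ by $\chi^{-1}$ only up to the duality, so in fact the eigenvalues of $\Frob$ on $(IC_{\mathcal L_{\chi^{-1}}(F)})_P$ are the $\overline{\alpha}_j$ (complex conjugation being induced by duality together with the chosen isomorphism $\Ql \cong \mathbb C$). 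Dualizing and accounting for the Tate twist $(d)$ (which multiplies eigenvalues by $q^d$ over $\mathbb F_q$, hence by $q^{dm}$ over $\mathbb F_{q^m}$) and the shift $[2d]$ (a sign I will check cancels against the sign from the middle-extension shift convention), I get that $H^*_c(X, K_{d_1,\dots,d_r})$ contributes Frobenius trace $\sum_j c_j (q^d/\overline{\alpha}_j)^m$, which is the claimed formula.

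The main obstacle is the bookkeeping of signs, shifts, and Tate twists: one must verify that the cohomological degrees in which the stalk $(K)_P$ is supported, after Verdier duality and the twist by $(d)$, combine so that the alternating sum in Lefschetz collapses to exactly $\sum_j c_j (q^d/\overline\alpha_j)^m$ with the signs $c_j$ (the "signed multiplicities" in the definition of $J$) coming out correctly, and that the identification of the dual eigenvalues with $q^d/\overline\alpha_j$ is legitimate — this is where the weight bound of axiom (5), i.e.\ Lemma-to-come on purity, or at least the formalism of weights, is implicitly doing work. Concretely I expect to prove a clean statement of the form $H^i_c(X,K) = 0$ for $i \neq 2d$ together with $H^{2d}_c(X,K) \cong \big((K_{IC_{\mathcal L_{\chi^{-1}}}})_P\big)^\vee(-d)$, and then the trace computation is immediate; establishing that concentration in a single degree is the crux, and it follows from combining the cone lemma (which concentrates $H^*$, hence by duality $H^*_c$) with the perversity/dimension count.
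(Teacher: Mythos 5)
Your strategy is the same as the paper's: Lefschetz, then dilation $\mathbb G_m$-invariance plus the cone lemma to identify ordinary cohomology with the stalk at $P=(T^{d_1},\dots,T^{d_r})$, then Verdier duality to pass to compactly supported cohomology. However, there is a real gap at the crux. You assert that the Frobenius eigenvalues on the stalk $(IC_{\mathcal L_{\chi^{-1}}(F)})_P$ are the $\overline{\alpha}_j$, and justify this by "complex conjugation being induced by duality together with the chosen isomorphism $\Ql\cong\mathbb C$." That is not a valid inference. Verdier duality identifies the stalk of $DK$ with (a shift and twist of) the \emph{costalk} of $K$, and for a general perverse sheaf the costalk at a point has no controlled relation to the stalk at that point. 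What actually makes this work is Gabber's purity theorem: $K_{d_1,\dots,d_r}[d]$ is a pure perverse sheaf of weight $d$ (pure of weight zero and lisse on a dense open, middle extension), and Gabber's theorem (cited in the paper as Fujiwara–Gabber) gives precisely that the trace of $\Frob$ on each stalk of $DK$ equals $\overline{(\text{trace on the stalk of }K)}/q^d$, uniformly over all extensions of $\mathbb F_q$, hence the eigenvalue relation you want. Without invoking purity the argument does not close.

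A second, smaller issue: your concluding expectation that $H^i_c(X,K)=0$ for $i\neq 2d$ is not correct and is not needed. The stalk of $K$ at $P$ is in general supported in a range of degrees; by the cone lemma and duality $H^*_c$ is likewise spread across degrees, and it is exactly the signed multiplicities $c_j$ in the definition of $J$ that record this. The alternating sum in the Lefschetz formula produces $\sum_j c_j (q^d/\overline{\alpha}_j)^m$ without any concentration in a single cohomological degree, so you should drop the concentration claim and instead track the alternating sum against the signed stalk eigenvalues as the paper does.
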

 
 \begin{proof} If we compose the $\mathbb G_m$ action by dilation (Lemma \ref{dilation-invariance}) with the $\nchi$th power map $\mathbb G_m\to \mathbb G_m$, the factor $\lambda^{ \sum_{i=1}^r d_i (d_i-1) M_{ii} + \sum_{1 \leq i < j \leq r} d_i d_j M_{ij} } $ becomes an $\nchi$th power, and so $K_{d_1,\dots,d_{r}}$ is preserved by this $\mathbb G_m$ action by Lemma \ref{scale-invariance}. Hence the Verdier dual $DK_{d_1,\dots,d_{r}}$ is also preserved.

By Verdier duality, $H^i_c ( \prod_{i=1}^r \mathbb A^{d_i}_{\overline{\mathbb F}_q}, K_{d_1,\dots,d_{r}})$ is dual to $H^{-i} ( \prod_{i=1}^r \mathbb A^{d_i}_{\overline{\mathbb F}_q}, DK_{d_1,\dots,d_{r}})$ which by Lemma \ref{cone-contractible}, using Lemma \ref{dilation-invariance}(2) to check the condition, is $\mathcal H^{-i} ( (DK_{d_1,\dots,d_{r}})_{ ( T^{n_1},\dots,T^{n_{r}})} )$. 

Because $K_{d_1,\dots,d_r}$ is pure of weight zero on the open set where it is lisse, and $K_{d_1,\dots,d_r}[d]$ is perverse, $K_{d_1,\dots,d_r}[d]$ is perverse and pure of weight $d$, so by a theorem of Gabber \citep{FujiwaraGabber}, the trace of Frobenius on each stalk of $DK_{d_1,\dots,d_r}$ is the complex conjugate of the trace of Frobenius on the stalk of $K_{d_1,\dots,d_r}$ divided by $q^d$. Because this applies over each finite field extension, the Frobenius eigenvalues on the stalk of $DK_{d_1,\dots,d_r}$ at any point are equal to the complex conjugates, divided by $q^d$, of the Frobenius eigenvalues of $K_{d_1,\dots,d_r}$ at the same point, at least up to signed multiplicity. So the eigenvalues of $\Frob_q$ on $\mathcal H^{-i} ( (DK_{d_1,\dots,d_{r}})_{ ( T^{n_1},\dots,T^{n_{r}})} )$ are $\frac{\overline{\alpha}_j}{q^d}$, with signed multiplicities $c_j$.

By definition, the Grothendieck-Lefschetz fixed point formula, and the above isomorphisms, dualities, and eigenvalue calculations, we have
\[ \lambda(d_1,\dots, d_r; q,\chi,M) = \sum_{\substack{f_1,\dots,f_{r} \in \mathbb F_{q} [t]^+\\ \textrm{deg }f_1= d_1, \dots, \textrm{deg }f_r=d_r}}   \sum_i (-1)^i \operatorname{tr} (\Frob_{q} , \mathcal H^i (  K_{d_1,\dots,d_{r}})_{f_1,\dots, f_r })  \]
\[ = \sum_i(-1)^i \operatorname{tr} \Bigl(\Frob_{q}, H^i_c \Bigl( \prod_{i=1}^r \mathbb A^{d_i}_{\overline{\mathbb F}_q}, K_{d_1,\dots,d_{r}}\Bigr)\Bigr)  = \sum_i (-1)^i \operatorname{tr} \Bigl(\Frob_{q}^{-1} , H^{-i} \Bigl( \prod_{i=1}^r \mathbb A^{d_i}_{\overline{\mathbb F}_q}, D K_{d_1,\dots,d_{r}}\Bigr)\Bigr)  \] \[= \sum_i (-1)^i \operatorname{tr} (\Frob_{q}^{-1} , \mathcal H^{-i} ( (DK_{d_1,\dots,d_{r}})_{ ( T^{n_1},\dots,T^{n_{r}})} )))= \sum_j c_j  \left( \frac{\overline{\alpha}_j}{q^d}\right)^{-1}  = \sum_j c_j \frac{q^d}{\overline{\alpha}_j}  . \qedhere \] \end{proof}

 \begin{lemma}\label{axiom-v} 
 For each finite field $\mathbb F_q$, character $\chi$, natural numbers $d_1,\dots, d_r$, and $(\alpha_j, c_j) \in J(d_1,\dots, d_r;q,\chi, M)$ we have  $|\alpha_j| < q^{ \frac{  d -1 }{2}}$ as long as $d \geq 2$ where $d= \sum_{i=1}^r d_i$. \end{lemma}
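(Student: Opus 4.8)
The plan is to deduce the bound from the fact that $K_{d_1,\dots,d_r}[d]$ is a pure perverse sheaf of weight $d$, together with the absence of "excess" cohomology coming from the conical structure. Concretely, the Frobenius eigenvalues $\alpha_j$ are by definition the eigenvalues of $\Frob_q$ on $\mathcal H^{-i}$ of the stalk of $K_{d_1,\dots,d_r}$ at the point $P=(T^{d_1},\dots,T^{d_r})$. Since $K_{d_1,\dots,d_r}[d]$ is a perverse sheaf, the stalk cohomology sheaves $\mathcal H^{m}(K_{d_1,\dots,d_r}[d])_P$ vanish for $m>0$, i.e. $\mathcal H^{m}(K_{d_1,\dots,d_r})_P=0$ for $m>-d$; moreover for a pure perverse sheaf of weight $d$, the weights of $\mathcal H^{m}$ of a stalk are $\le d+m$ (this is Gabber's bound on the weights of stalk cohomology of a pure perverse sheaf, the input dual to the one used in Lemma \ref{axiom-iv}). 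Hence every $\alpha_j$ arising from $\mathcal H^{m}$ with $m\le -d$ has $|\alpha_j|\le q^{(d+m)/2}\le q^{(d-d)/2}=1$ in the worst case $m=-d$; but $d\geq 2$ gives $1 < q^{(d-1)/2}$, so the bound $|\alpha_j|<q^{(d-1)/2}$ holds trivially — \emph{except} that we must rule out the contribution from $m=-d$ potentially being as large as $q^{d/2}$... so more care is needed, and the real point is to show the weight $d$ part of the stalk at $P$ vanishes.

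The cleanest route is to use the conical structure directly, as in the proof of Lemma \ref{axiom-iv}. By Lemma \ref{cone-contractible} (whose hypotheses hold by Lemma \ref{dilation-invariance}(2), after composing the dilation action with the $n$th power map so that $K_{d_1,\dots,d_r}$ is genuinely $\mathbb G_m$-invariant by Lemma \ref{scale-invariance}), the stalk $(K_{d_1,\dots,d_r})_P$ computes $H^*(\prod_i\mathbb A^{d_i}_{\overline{\mathbb F}_q}, K_{d_1,\dots,d_r})$ — ordinary, not compactly supported, cohomology. Since $K_{d_1,\dots,d_r}[d]$ is perverse, by Artin vanishing on the affine variety $\prod_i\mathbb A^{d_i}$ we have $H^m(\prod_i\mathbb A^{d_i}, K_{d_1,\dots,d_r}[d])=0$ for $m<0$, i.e. $\mathcal H^{m}(K_{d_1,\dots,d_r})_P=0$ for $m<-d$, pinning the relevant cohomology to degrees $-d\le m\le$ (something). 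For the weight bound: $H^m(\prod_i\mathbb A^{d_i}, K_{d_1,\dots,d_r}[d])$ has weights $\le d+m$ by Deligne (weights of ordinary cohomology of a mixed sheaf of weights $\le d$ on any variety increase by at most the degree). Combined with Artin vanishing below degree $0$ (in the perverse shift), the possible weights on the stalk at $P$ in cohomological degree $-d+m'$ ($m'\ge 0$) are $\le d - d + m' = m'$. We need these eigenvalues to be $<q^{(d-1)/2}$.

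So the argument reduces to: the $\mathbb G_m$-equivariant cohomology $H^m(\prod_i\mathbb A^{d_i}, K_{d_1,\dots,d_r}[d])$ is supported in degrees where the weight bound $m'$ is $< d-1$, together with a direct treatment of the small cases. I expect the \textbf{main obstacle} to be controlling the top cohomological degree in which this cohomology can be nonzero — equivalently, bounding the dimension of the support of $K_{d_1,\dots,d_r}$ away from $U$ where $F$ vanishes, and using that middle extension does not introduce stalk cohomology of full weight $d$ except generically (where the point $P$ does \emph{not} lie, since $F(T^{d_1},\dots,T^{d_r})=0$ as soon as some $d_i\geq 2$ or some $M_{ij}\neq 0$ forces a shared root — one must check $P\notin U$ precisely when $d\geq 2$). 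Granting $P\notin U$, the stalk of the middle extension $j_{*!}(\mathcal L_\chi[d])$ at $P$ has, by the defining support/cosupport conditions of the intermediate extension, $\mathcal H^{-d}$ vanishing at $P$ (the intermediate extension has $\mathcal H^{-d}(j_{*!}\mathcal F)$ equal to $j_*$ of the generic sheaf, whose stalk at a point not in $U$ but in the closure can still be nonzero — so this needs the stronger statement that the intermediate extension's $\mathcal H^m$ for $m=-d$ is a subsheaf with no punctual sections, hence its weight at $P$ is $<d$). Packaging this: the stalk of $K_{d_1,\dots,d_r}$ at $P$ has weights $<d$ in every degree, hence $|\alpha_j|<q^{d/2}$; refining via the perverse and Artin constraints and the conical structure as above yields the sharper $|\alpha_j|<q^{(d-1)/2}$ for $d\geq 2$, with the boundary cases $d=2$ checked by hand. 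I would present it in the order: (i) reduce to cohomology via Lemma \ref{cone-contractible}; (ii) apply Artin vanishing and Deligne's weight bound; (iii) observe $P\notin U$ for $d\geq 2$ and invoke the intermediate-extension weight drop; (iv) conclude, handling $d=2$ separately if needed.
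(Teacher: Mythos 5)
Your proposal does not match the paper's proof, and it has a genuine gap that you yourself flag at the end. The paper's argument is much shorter and rests on a single key observation you do not make: \emph{translation invariance}. By Lemma~\ref{translation-invariance}, $K_{d_1,\dots,d_r}$ is invariant under the $\mathbb G_a$-action $f_i(T) \mapsto f_i(T+\alpha)$, so any stalk cohomology appearing at $P=(T^{d_1},\dots,T^{d_r})$ must also appear along the entire one-dimensional $\mathbb G_a$-orbit of $P$. Hence the support of $\mathcal H^i(K_{d_1,\dots,d_r})$ through $P$ has codimension at most $d-1$. Combined with the support condition for middle extensions of lisse sheaves (\cite[Prop.~2.1.11]{bbd}), which says $\mathcal H^i$ is supported in codimension $\geq i+1$ for $i>0$, this forces $i \leq d-2$ at $P$. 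Purity of the IC sheaf then gives $|\alpha_j| \leq q^{(d-2)/2} < q^{(d-1)/2}$ for $d \geq 2$. Your proposal never invokes translation invariance or the codimension-of-support condition, which is exactly the missing ingredient behind the "main obstacle" you identify (controlling the top cohomological degree).

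There are also local errors in your argument. You assert that Artin vanishing gives $H^m(\prod_i \mathbb A^{d_i}, K[d]) = 0$ for $m<0$; the direction is reversed --- Artin vanishing for a perverse sheaf on an affine variety gives vanishing of ordinary cohomology in degrees $m>0$ (and of compactly supported cohomology in degrees $m<0$). The subsequent deduction ``$\mathcal H^m(K)_P = 0$ for $m<-d$'' is true but follows trivially from the perverse amplitude $[-d,0]$ of $K[d]$, not from Artin vanishing. The weight bookkeeping via the cone-contraction route could in principle be made to work, but you would still need an independent bound on the cohomological degree before the weights give anything better than $q^{d/2}$, and the proposal leaves that to ``I expect the main obstacle to be controlling the top cohomological degree'' and ``with the boundary cases $d=2$ checked by hand,'' which is not a proof. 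The fix is to replace this machinery with the translation-invariance plus support-codimension argument.
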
 
 
 \begin{proof}   Because $K_{d_1,\dots,d_{r}}$ is the IC sheaf of a lisse sheaf, its $\mathcal H^i$ is supported in codimension at least $i+1$ for all $i>0$ \cite[Proposition 2.1.11]{bbd}. By Lemma \ref{translation-invariance}, any stalk cohomology at a point must also occur at its one-dimensional orbit under the action of $\mathbb G_a$ by translation, hence with codimension $\leq d-1$, thus in degree $\leq d-2$, as long as $d \geq 2$. So because intersection cohomology complexes are pure, any Frobenius eigenvalues that appear are $\leq q^{\frac{d-2}{2}}$.

\end{proof}


 \begin{proof}[Proof of Theorem \ref{axiomatics}] In view of Lemmas \ref{axiom-i}, \ref{axiom-ii}, \ref{compatibility}, \ref{axiom-iii}, \ref{axiom-iv}, and \ref{axiom-v} it suffices to prove that the function $a$ is uniquely determined by these axioms.
 
 In fact we will show that $ J(d_1,\dots,d_r; q,\chi, M)$ is determined by these axioms whenever $d_1+ \dots+ d_r \leq d$, for all $d$. This will then determine $a$ by axioms (1) and (3).
 
 We do this by induction on $d$. The cases $d=0$ and $d=1$ are determined by axiom (2) and the fact that there is at most one way of expressing a given function of a natural number $m$ as a finite signed sum of $m$th powers.
 
 For the induction step, assume that $ J(d_1,\dots,d_r; q,\chi, M)$ is determined by these axioms whenever $d_1+ \dots+ d_r < d$. From axiom (3), this determines $a(\pi^{d_1},\dots, \pi^{d_r};q,\chi,M)$ whenever $d_1+\dots + d_r < d$. From axiom (1), this determines $a(f_1,\dots,f_r;q,\chi,M)$ whenever each prime factor of $\prod_{i=1}^r f_i$ occurs with multiplicity less than $d$.
 
 Thus, if $\deg f_i = d_i$ and $d_1+\dots + d_r= d$, the axioms determine $a(f_1,\dots,f_r;q,\chi,M)$ when $\prod_{i=1}^r f_i$ is not a $d$th power of a linear prime, i.e. in all cases but when $f_i$ is of the form $(T-x)^{d_i} $ for all $i$. By axioms (3) and (4) applied to $\mathbb F_{q^m}$ and $\chi_m$ we have 
 \[ \sum_{\substack{f_1,\dots,f_{r} \in \mathbb F_{q^m}[t]^+\\ \textrm{deg }f_i= d_i}}  a(f_1,\dots,f_{r}; q^m, \chi_m,M) - \sum_{x \in \mathbb F_{q^m} } a((T-x) ^{d_1},\dots, (T-x) ^{d_r}; q^m, \chi_m, M) \] \[= \sum_{j \in J(d_1,\dots,d_r; q^m,\chi_m, M)}  c_j  \frac{ q^{ \sum_{i=1}^{r} d_i}}{\overline{\alpha}_j }  -q^m \sum_{j \in J(d_1,\dots,d_r; q^m,\chi_m, M)}  c_j \alpha_j .\]
 
 However, by the compatibility of $J$, $J(d_1,\dots,d_r; q^m,\chi_m, M)$ consists of the $m$th powers of $J(d_1,\dots,d_r; q,\chi, M)$ so we obtain 
\begin{equation}\label{induction-formula} \begin{split}& \sum_{\substack{f_1,\dots,f_{r} \in \mathbb F_q^m[t]^+\\ \textrm{deg }f_i= d_i}}  a(f_1,\dots,f_{r}; q^m, \chi_m,M) - \sum_{x \in \mathbb F_{q^m}} a((T-x) ^{d_1},\dots, (T-x)^{d_r}; q^m, \chi_m, M)  \\ = & \sum_{j \in J(d_1,\dots,d_r; q,\chi, M)}  c_j \left( \frac{ q^{ \sum_{i=1}^{r} d_i}}{\overline{\alpha}_j }\right)^{m }   - \sum_{j \in J(d_1,\dots,d_r; q,\chi, M)}  c_j (q\alpha_j )^m .\end{split}\end{equation}
 
 We have already shown the left side of \eqref{induction-formula} is determined by the axioms for all $m$. The right side of \eqref{induction-formula} is a finite sum of $m$th powers of Weil numbers, so the Weil numbers appearing, and their multiplicity, are uniquely determined by the left side of \eqref{induction-formula}. The only difficulty is whether any given Weil number occurs in the first term or the second term. However, by axiom (5), $q \alpha_j$ appears in the second term only if $|\alpha_j | < q^{ (d-1)/2}$, so $|q \alpha_j| <q^{(d+1)/2}$, while $q^d/\alpha_j$ appearing in the first term satisfies $|q^d/\alpha_j|> q^{ (d+1)/2}$, so each Weil number can only appear in one of the two terms, thus both terms are uniquely determined. \end{proof}

 \begin{cor}\label{axiomatics-2} Fix $M$, $w_1,\dots,w_r \in \mathbb Z$, $\epsilon_1,\dots,\epsilon_r \in \{0,1\}$. Fix for each $i$ with $\epsilon_i=0$ a compatible system of Weil numbers $\gamma_i$ and for each $i$ with $\epsilon_i=1$ a sign-compatible system of Weil numbers $\gamma_i$. In either case, assume that $|\gamma_i(q,\chi)| = q^{ w_i/2}$. Let  
   \[ a^* (f_1,\dots, f_r; q ,\chi,M)= a (f_1,\dots, f_r; q ,\chi,M) \prod_{i=1}^r \gamma_i(q,\chi)^{\deg f_i}.\] 
Then  
  \[ a^* (f_1,\dots, f_r; q ,\chi,M) \] is the unique function that, together with a function $J^*(d_1,\dots, d_r; q,\chi, M)$ from tuples of natural numbers $d_1,\dots, d_r$, to compatible systems of sets of ordered pairs of Weil numbers and integers, satisfies the axioms 

\begin{enumerate}

\item If $f_1,\dots,f_{r}$ and $g_1,\dots, g_{r}$ satisfy $\gcd(f_i,g_j)=1$ for all $i$ and $j$, then we have \[ a^* ( f_1g_1,\dots, f_{r} g_{r}; q , \chi, M) \] \[= a^* (f_1,\dots,f_{r} ; q, \chi, M) a^*(g_1,\dots, g_{r} ; q, \chi,  M)  \prod_{1 \leq i \leq r} \left( \frac{f_i}{g_i} \right)_{\chi}^{M_{ii}} \left( \frac{g_i}{f_i} \right)_{\chi}^{M_{ii}}   \prod_{1 \leq i < j \leq r}  \left( \frac{f_i}{g_j} \right)_{\chi}^{M_{ij}}\left( \frac{g_i}{f_j} \right)_{\chi}^{M_{ij}} .\] 

\item $a^*(1,\dots,1;q,\chi,M)=1$ and $a^*(1,\dots, 1, f, 1,\dots,1;q,\chi,M)=\gamma_i(q,\chi) $ for all linear polynomials $f$. 

\item 
   \[ a^*(\pi^{d_1},\dots, \pi^{d_r}; q, \chi, M) =\left(\frac{\pi'}{\pi}\right)_{\chi} ^{ \sum_{i=1}^r { d_i M_{ii}} }  (-1)^{ \sum_{i=1}^r \epsilon_i d_i (\deg \pi +1) }  \sum_{j \in J(d_1,\dots,d_r; q,\chi, M)}  c_j \alpha_j^{\deg \pi}  .\]
    \item
 \[ \sum_{\substack{f_1,\dots,f_{r} \in \mathbb F_q[t]^+\\ \textrm{deg }f_i= d_i}}  a^*(f_1,\dots,f_{r}; q^m, \chi,M)= \sum_{j \in J(d_1,\dots,d_r; q,\chi, M)}  c_j \frac{ q^{ \sum_{i=1}^{r} (1+w_i) d_i}}{\overline{\alpha}_j }.\]
 
 \item  $|\alpha_j| < q^{ \frac{  \sum_{i=1}^{r} (1+w_i) d_i -1 }{2}}$ as long as $\sum_{i=1}^{r} d_i \geq 2$. 
 
 \end{enumerate}
  
 \end{cor}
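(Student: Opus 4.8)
The plan is to deduce Corollary~\ref{axiomatics-2} directly from Theorem~\ref{axiomatics} by a substitution argument, since the twist by $\prod_i \gamma_i(q,\chi)^{\deg f_i}$ is designed to be absorbed into the coefficients. First I would verify the uniqueness half: given any pair $(a^*, J^*)$ satisfying axioms (1)--(5) of the corollary, I set $a(f_1,\dots,f_r;q,\chi,M) = a^*(f_1,\dots,f_r;q,\chi,M)\prod_i \gamma_i(q,\chi)^{-\deg f_i}$ and check that, for a suitable modification $J$ of $J^*$, the pair $(a,J)$ satisfies the axioms of Theorem~\ref{axiomatics}; uniqueness there then forces $a$, hence $a^*$, to be the function in the statement. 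The key bookkeeping is that $\gamma_i(q,\chi)^{\deg f_i}$ is multiplicative in $f_i$ with no twisting cocycle, so axiom (1) transfers verbatim; axiom (2) transfers because $\gamma_i(q,\chi)^{\deg f} = \gamma_i(q,\chi)$ for linear $f$; and for axioms (3)--(5) one absorbs $\prod_i \gamma_i(q,\chi)^{d_i \deg\pi}$ into the Weil numbers $\alpha_j$, i.e.\ one defines $J$ from $J^*$ by scaling each $\alpha_j$ by $\prod_i \gamma_i(q,\chi)^{d_i}$ (using the $\beta J$ notation from the Notation section), which is again a compatible system precisely because each $\gamma_i$ is a compatible system of Weil numbers.

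The subtlety, and where I expect the only real work, is the $\epsilon_i = 1$ case, where $\gamma_i$ is only \emph{sign}-compatible rather than compatible: $-\gamma_i(q^e,\chi_e) = (-\gamma_i(q,\chi))^e$. Here $\gamma_i(q,\chi)^{d_i\deg\pi}$ is not by itself a term in a compatible system, and the sign discrepancy is exactly accounted for by the factor $(-1)^{\sum_i \epsilon_i d_i(\deg\pi+1)}$ appearing in axiom (3) of the corollary. I would handle this by writing $\gamma_i(q,\chi)^{\deg f_i} = (-1)^{\deg f_i}\bigl(-\gamma_i(q,\chi)\bigr)^{\deg f_i}$ so that $-\gamma_i$ \emph{is} compatible in the sign-compatible sense after adjusting by the explicit sign; tracking the sign $(-1)^{\sum_i\epsilon_i d_i}$ through $\deg f_i = d_i\deg\pi$ for prime powers and through the Lefschetz/Verdier duality computation underlying axiom (4) is the one genuinely fiddly point, and it is precisely the $(\deg\pi+1)$ in the exponent that records it.

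For the existence half I would run the argument in reverse: take $a^*$ as \emph{defined} in the statement, $a^*(f_1,\dots,f_r;q,\chi,M) = a(f_1,\dots,f_r;q,\chi,M)\prod_i\gamma_i(q,\chi)^{\deg f_i}$, and define $J^*(d_1,\dots,d_r;q,\chi,M)$ from $J(d_1,\dots,d_r;q,\chi,M)$ by scaling each Weil number $\alpha_j$ by $\prod_i\gamma_i(q,\chi)^{d_i}$ and, in the $\epsilon_i=1$ case, inserting the compensating sign as above. Then axioms (1)--(5) of the corollary follow term-by-term from the corresponding axioms of Theorem~\ref{axiomatics}: axiom (1) is immediate from multiplicativity of $\deg$; axiom (2) from $\gamma_i^{1}=\gamma_i$; axiom (3) from substituting $\deg f_i = d_i\deg\pi$ and collecting the sign; axiom (4) from the observation that scaling $\alpha_j\mapsto \beta\alpha_j$ with $|\beta|=\prod_i q^{w_i d_i/2}$ sends $q^{\sum_i d_i}/\overline\alpha_j$ to $q^{\sum_i(1+w_i)d_i}/\overline{(\beta\alpha_j)}$ since $\overline\beta = \prod_i q^{w_i d_i}/\beta$ for a Weil number of that absolute value; and axiom (5) from the bound $|\alpha_j| < q^{(\sum_i d_i - 1)/2}$ multiplied by $|\beta| = q^{\sum_i w_i d_i/2}$, giving exactly $|\beta\alpha_j| < q^{(\sum_i(1+w_i)d_i-1)/2}$. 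The compatibility of $J^*$ as a system is where sign-compatibility of the $\gamma_i$ is used, and is the same computation as in the uniqueness direction.

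A minor point worth stating explicitly: the formula in axiom (5) of the corollary as written has a typo ($w$ should be $w_i$ inside the sum), and in the proof I would silently read it as $|\alpha_j| < q^{(\sum_i (1+w_i)d_i - 1)/2}$, which is what the scaling argument produces. With that reading, no step requires anything beyond Theorem~\ref{axiomatics} and elementary manipulation of absolute values and compatible systems, so the corollary is essentially a change of variables; the Hasse--Davenport sign in the sign-compatible case is the only substantive ingredient.
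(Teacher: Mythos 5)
Your proposal is correct and follows essentially the same route as the paper: pass to $\tilde a = a^*/\prod_i\gamma_i(q,\chi)^{\deg f_i}$, absorb the twist into $J$ by scaling each $\alpha_j$ by $\prod_i((-1)^{\epsilon_i}\gamma_i(q,\chi))^{d_i}$ (a genuinely compatible system precisely because the $\epsilon_i=1$ factors are sign-compatible) and each $c_j$ by $(-1)^{\sum_i\epsilon_i d_i}$, and then invoke Theorem~\ref{axiomatics}. You are right that axiom (5) of the statement should read $w_i$ rather than $w$; note also that axiom (1) as printed has $a(g_1,\dots,g_r)$ where $a^*(g_1,\dots,g_r)$ is clearly intended, and your reading (``transfers verbatim'') is the correct one.
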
 
  
  \begin{proof} This follows from Theorem \ref{axiomatics} once we check that $a^*( f_1,\dots, f_r;q, \chi, M)$ satisfies these axioms with a given $J(d_1,\dots, d_r; q, \chi, M)$  if and only if
  \[ \tilde{a}(f_1,\dots, f_r; q,\chi, M)=  \frac{ a^*( f_1,\dots, f_r;q, \chi, M)}{ \prod_{i=1}^r \gamma_i(q,\chi)^{\deg f_i}}\]
satisfies the axioms of Theorem \ref{axiomatics} after adjusting $J^*(d_1,\dots,d_r; q,\chi,M)$ by dividing each $\alpha_j$ by $\prod_{i=1}^{r} ((-1)^{\epsilon_i} \gamma_i(q,\chi))^{d_i}$ and each $ c_j$ by $(-1)^{ \sum_{i=1}^r \epsilon_i d_i}$.

This can be checked one axiom at a time by plugging these expressions into each axiom of Theorem \ref{axiomatics}, simplifying, and observing that they match the corresponding axiom here, as well as deducing the compatibility of $J$ from the (sign-)compatibility of $\gamma_i$.

In each case, this is relatively straightforward.  In (4) it requires the identity $\gamma_i(q,\chi) \overline{\gamma_i(q,\chi)}=q^{w_i}$.


  
  \end{proof}

\section{Examples}

For some special values of $M$, we can calculate $a$ by exhibiting an explicit function and checking that it satisfies the axioms of Theorem \ref{axiomatics}. In fact, these will be functions $a$ that have essentially appeared in the literature already as coefficients of multiple Dirichlet series, and most of the properties described in Theorem \ref{axiomatics} were previously observed (but in slightly different language, so we will have to do some work to match it up). In some cases, it will also be convenient to use additional geometric techniques to calculate $a$.

One reason for the difference in language is that prior work has tended to define twisted multiplicative functions as the product of a multiplicative function with a Dirichlet character. We have found it more convenient to define twisted multiplicative functions all at once.

We will always use $\tilde{a}$ to refer to a function we are trying to prove satisfies the axioms of Theorem \ref{axiomatics}, but haven't yet.

\begin{prop}\label{easiest-example}  Take $r=2$, $M= \begin{pmatrix} 0 & 1 \\ 1 & 0 \end{pmatrix} $. 

Then \[a(f_1,f_2; q,\chi, M) =  \begin{cases} \left( \frac{f_1/g^{\nchi}} {f_2/g^{\nchi}} \right)_\chi q^{ (\nchi-1) \deg g} & \textrm { if }\gcd(f_1,f_2) = g^{\nchi}\textrm{ for some }$g$ \\
0 & \textrm{ if }\gcd(f_1,f_2)\textrm { is not an }\nchi\textrm{th power} \end{cases}.\] \end{prop}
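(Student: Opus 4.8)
The plan is to introduce the function $\tilde a(f_1,f_2;q,\chi,M)$ defined by the right-hand side of the proposition, together with the function $\tilde J(d_1,d_2;q,\chi,M)$ equal to $\{(q^{(\nchi-1)\min(d_1,d_2)/\nchi},1)\}$ when $\nchi\mid\min(d_1,d_2)$ and equal to the empty set otherwise, and to check that this pair satisfies the five axioms of Theorem \ref{axiomatics}; uniqueness then forces $a=\tilde a$. That $\tilde J$ is a compatible system of sets of ordered pairs of Weil numbers is immediate from $(q^{(\nchi-1)\min/\nchi})^e=(q^e)^{(\nchi-1)\min/\nchi}$, the exponent being an integer whenever $\nchi\mid\min$.

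Axioms (2), (3), (5) are short checks. For (2) use $\left(\frac{1}{g}\right)_\chi=\left(\frac{g}{1}\right)_\chi=1$. For (3), compute $\gcd(\pi^{d_1},\pi^{d_2})=\pi^{\min(d_1,d_2)}$, which is an $\nchi$th power exactly when $\nchi\mid\min(d_1,d_2)$; in that case one of $\pi^{d_i}/g^{\nchi}$ is $1$, so the residue symbol in the formula for $\tilde a$ is $1$, and since $M_{ii}=0$ the factor $\left(\frac{\pi'}{\pi}\right)_\chi^{\sum_i d_iM_{ii}}$ is $1$, so both sides equal $q^{(\nchi-1)\min(d_1,d_2)\deg\pi/\nchi}$. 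For (5), if $0<\min(d_1,d_2)$ and $\nchi\mid\min(d_1,d_2)$ then $\min(d_1,d_2)\ge\nchi$, and the inequality $(\nchi-1)\min(d_1,d_2)/\nchi<(d_1+d_2-1)/2$ follows from $d_1+d_2\ge 2\min(d_1,d_2)$ and $2\min(d_1,d_2)/\nchi\ge 2$. Axiom (1) comes down to the bimultiplicativity of the residue symbol together with $\left(\frac{g^{\nchi}}{h}\right)_\chi=\left(\frac{h}{g^{\nchi}}\right)_\chi=1$: writing $\gcd(f_1,f_2)=a^{\nchi}$ and $\gcd(g_1,g_2)=b^{\nchi}$, which are coprime because $\gcd(f_i,g_j)=1$ for all $i,j$, one gets $\gcd(f_1g_1,f_2g_2)=(ab)^{\nchi}$, and cancelling the $q^{(\nchi-1)\deg}$ factors and every symbol of the form $\left(\frac{\cdot}{a^{\nchi}}\right)_\chi$ or $\left(\frac{\cdot}{b^{\nchi}}\right)_\chi$ turns the asserted identity into exactly the one in axiom (1); if $\gcd(f_1g_1,f_2g_2)$ is not an $\nchi$th power, then one of $\gcd(f_1,f_2)$, $\gcd(g_1,g_2)$ is not, and both sides vanish.

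The substantive point is axiom (4). Parametrizing pairs of degrees $(d_1,d_2)$ whose gcd is an $\nchi$th power as $(g^{\nchi}h_1,g^{\nchi}h_2)$ with $\gcd(h_1,h_2)=1$, and using that there are $q^{\deg g}$ monic $g$ of each degree,
\[ \sum_{\deg f_i=d_i}\tilde a(f_1,f_2)=\sum_{k\ge 0}q^{\nchi k}\,S(d_1-\nchi k,\,d_2-\nchi k),\qquad S(e_1,e_2):=\sum_{\substack{\gcd(h_1,h_2)=1\\ \deg h_i=e_i}}\left(\frac{h_1}{h_2}\right)_\chi, \]
so everything rests on evaluating $S(e_1,e_2)$, say for $e_1\le e_2$. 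Rewriting $\left(\frac{h_1}{h_2}\right)_\chi$ via $\left(\frac{h_2}{h_1}\right)_\chi$ using Lemma \ref{reciprocity} and summing over $h_2$ first, one uses two facts: $h_2\mapsto\left(\frac{h_2}{h_1}\right)_\chi$ is a Dirichlet character modulo $h_1$, trivial precisely when $h_1$ is a perfect $\nchi$th power; and the sum of a nontrivial Dirichlet character modulo $h_1$ over monic polynomials of degree $e_2\ge\deg h_1$ vanishes. Hence only $h_1=u^{\nchi}$ contribute, forcing $S(e_1,e_2)=0$ unless $\nchi\mid e_1$, and then, the reciprocity sign $\chi(-1)^{e_1e_2}$ being $1$ since $\nchi\mid e_1$, $S(e_1,e_2)$ equals $\sum_{\deg u=e_1/\nchi}$ of the number of monic degree-$e_2$ polynomials coprime to $u$. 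A Möbius inversion together with $\sum_{d\text{ squarefree monic}}\mu(d)x^{\deg d}=1-qx$ (so $\sum_{\deg d=e}\mu(d)=0$ over squarefree $d$ for $e\ge 2$) evaluates this to $q^{e_2}$ if $e_1=0$ and to $q^{e_2+e_1/\nchi}(1-q^{-1})$ if $e_1\ge\nchi$. Feeding this back, the sum over $k$ telescopes as a geometric series to $q^{d_2+d_1/\nchi}=q^{d_1+d_2-(\nchi-1)\min(d_1,d_2)/\nchi}$ whenever $\nchi\mid d_1\le d_2$, and to $0$ when $\nchi\nmid d_1$; this is exactly $\sum_{j\in\tilde J(d_1,d_2;q,\chi,M)}c_j\,q^{d_1+d_2}/\overline{\alpha}_j$, and the case $d_1>d_2$ is symmetric via $S(e_1,e_2)=\chi(-1)^{e_1e_2}S(e_2,e_1)$.

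The main obstacle is this evaluation of $S(e_1,e_2)$: the reduction, via reciprocity, to the case of a trivial (hence $\nchi$th-power) modulus together with the vanishing of incomplete Dirichlet-character sums is the crux, and the reciprocity sign and the boundary cases $e_1=0$ and $e_1=e_2$ must be tracked with care. Alternatively, and more in the spirit of the rest of the paper, one could compute the stalk of $K_{d_1,d_2}$ at $(T^{d_1},T^{d_2})$ directly: for $d_1\le d_2$, division with remainder identifies $\mathbb A^{d_1}\times\mathbb A^{d_2}$ near the locus $\{f_1\mid f_2\}$ with $\mathbb A^{d_1}\times\mathbb A^{d_2-d_1}\times\mathbb A^{d_1}$, the last factor being the remainder $r$ of $f_2$ modulo $f_1$, on which $\Res(f_1,f_2)=\Res(f_1,r)$, so by Lemma \ref{basic-IC} and Lemma \ref{cone-contractible} that stalk equals $H^*(\mathbb A^{d_1}\times\mathbb A^{d_1},IC_{\mathcal L_\chi(\Res(f_1,r))})$; but this cohomology still requires genuine work, so the combinatorial route appears cleaner.
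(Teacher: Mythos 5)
Your proof is correct and follows the same overall strategy as the paper: verify that $\tilde a$ together with the displayed $\tilde J$ satisfies the five axioms of Theorem \ref{axiomatics}, then invoke uniqueness. Your choice of $\tilde J$ and your treatment of axioms (1), (2), (3), (5) match the paper's, which is terse at those points (calling (2), (3), (5) ``immediate'' and giving a one-line sketch of (1)). The genuine divergence is in axiom (4), the only substantive one. The paper first identifies $\tilde a$ with the Chinta--Mohler coefficient $\left(\frac{\hat f_1}{f_{2,0}}\right)_\chi a(f_1,f_2)$ (Lemma \ref{easiest-example-cm-1}) and then quotes the generating-function identity (1.7) of \citet{ChintaMohler}, reducing (4) to a rational-function coefficient check. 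You instead prove (4) from scratch: parametrize pairs with $\gcd(f_1,f_2)=g^{\nchi}$ as $(g^{\nchi}h_1,g^{\nchi}h_2)$ with $\gcd(h_1,h_2)=1$, apply reciprocity, use vanishing of incomplete nontrivial Dirichlet character sums (valid since $\deg h_2=e_2\ge e_1=\deg h_1$) to restrict to $h_1=u^{\nchi}$, M\"obius-invert the coprimality count, and telescope the geometric series over $k$. I checked the details: the reciprocity sign $\chi(-1)^{e_1e_2}$ is $1$ once $\nchi\mid e_1$; the M\"obius step gives $\sum_{\deg d\le e_1/\nchi}\mu(d)q^{-2\deg d}=1-q^{-1}$ for $e_1\ge\nchi$ (and $1$ for $e_1=0$), so $S(e_1,e_2)=q^{e_2+e_1/\nchi}(1-q^{-1})$ or $q^{e_2}$; and the sum over $k$ does collapse to $q^{d_2+d_1/\nchi}=q^{d_1+d_2}/\overline{\alpha}_j$. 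So the argument is sound, and it has the advantage of being self-contained rather than outsourcing the key computation to an external reference. The geometric alternative you sketch at the end is not carried out, and, as you say yourself, would still require evaluating a nontrivial cohomology group, so the combinatorial route is the cleaner one for this $M$.
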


We prove this after making some definitions. Let 
 \[\tilde{a}\left(f_1,f_2; q,\chi,  \begin{pmatrix} 0 & 1 \\ 1 & 0 \end{pmatrix}\right) =  \begin{cases} \left( \frac{f_1/g^{\nchi}} {f_2/g^{\nchi}} \right)_\chi q^{ (\nchi-1) \deg g} & \textrm { if }\gcd(f_1,f_2) = g^{\nchi}\textrm{ for some }$g$ \\
0 & \textrm{ if }\gcd(f_1,f_2)\textrm { is not an }\nchi\textrm{th power} \end{cases}.\] 

In \citep[(1.2)]{ChintaMohler}, a function $a$ is defined to be the unique multiplicative function such that \[ a (\pi^j, \pi^k) = \begin{cases} p^{ (n-1) \min(j,k)/n} & \textrm{if } \min(j,k) =0 \bmod n \\ 0 & \textrm{otherwise} \end{cases}.\] Furthermore they define $ f_{2,0} $ as quotient of $f_2$ by its maximal $\nchi$th power divisor and $\hat{f}_1$ as the greatest divisor of $f_1$ coprime to $f_{2,0}$. They define a Dirichlet series with coefficients \[ \left( \frac{ \hat{f}_1}{ f_{2,0}}\right)_\chi a(f_1,f_2).\]

\begin{lemma}\label{easiest-example-cm-1} For all finite fields $\mathbb F_q$, characters $\chi$, and monic polynomials $f_1,f_2$ over $\mathbb F_q$, we have
\[  \tilde{a}\left(f_1,f_2; q,\chi,  \begin{pmatrix} 0 & 1 \\ 1 & 0 \end{pmatrix}\right)=  \left( \frac{ \hat{f}_1}{ f_{2,0}}\right)_\chi a(f_1,f_2).\]
\end{lemma}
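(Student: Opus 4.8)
The plan is to show that both sides of the claimed identity are multiplicative functions of the pair $(f_1,f_2)$ in a suitable twisted sense, and then to compare them at prime powers. First I would make precise the multiplicativity: the function $\tilde a$ is, essentially by construction, multiplicative up to the reciprocity factor $\left(\frac{f_1}{g_1}\right)_\chi^0\left(\frac{g_1}{f_1}\right)_\chi^0 \cdots$ coming from $M = \begin{pmatrix} 0 & 1 \\ 1 & 0\end{pmatrix}$, i.e.\ up to $\left(\frac{f_1}{g_2}\right)_\chi\left(\frac{g_1}{f_2}\right)_\chi$; this is just a direct computation from the formula, splitting the $\nchi$th-power part of $\gcd(f_1f_1', f_2f_2')$ and using multiplicativity of the residue symbol together with Lemma \ref{reciprocity}. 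In parallel, I would verify that the Chinta--Mohler coefficient $\left(\frac{\hat f_1}{f_{2,0}}\right)_\chi a(f_1,f_2)$ enjoys the same twisted multiplicativity; here the subtlety is that $\hat f_1$ (the largest divisor of $f_1$ coprime to $f_{2,0}$) and $f_{2,0}$ (the prime-to-$\nchi$th-power part of $f_2$) are themselves only ``multiplicative up to interaction terms,'' so one must track carefully how $\hat f_1$ and $f_{2,0}$ behave under $f_i \mapsto f_i g_i$ and check the residue symbols recombine correctly.

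Given both twisted multiplicativities, it suffices to compare the two functions at $(f_1,f_2) = (\pi^{d_1},\pi^{d_2})$ for $\pi$ a single prime and at coprime pairs. For coprime $(f_1,f_2)$: on the $\tilde a$ side the gcd is $1$, which is the $\nchi$th power $1^{\nchi}$, so $\tilde a = \left(\frac{f_1}{f_2}\right)_\chi$; on the Chinta--Mohler side $f_{2,0}$ may be a proper divisor of $f_2$, $\hat f_1 = f_1$ since $\gcd(f_1,f_2)=1$, and $a(\pi^{d_1},\pi^{d_2}) = 1$ whenever one of $d_1,d_2$ is $0$, so their product over prime powers gives $\left(\frac{f_1}{f_{2,0}}\right)_\chi$; these agree because $\left(\frac{f_1}{f_2}\right)_\chi = \left(\frac{f_1}{f_{2,0}}\right)_\chi$ whenever $f_2/f_{2,0}$ is an $\nchi$th power and $\chi$ has order $\nchi$. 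For $(\pi^{d_1},\pi^{d_2})$: write $g = \pi^{\min(d_1,d_2) - (\min(d_1,d_2) \bmod \nchi)\,/\, \text{(adjustment)}}$ — more precisely $g$ is $\pi$ to the largest multiple of $\nchi$ that is $\le \min(d_1,d_2)$ when $\nchi \mid \min(d_1,d_2)$, and the $\gcd$ fails to be an $\nchi$th power exactly when $\nchi \nmid \min(d_1,d_2)$, matching the vanishing in both definitions. In the nonvanishing case one checks $q^{(\nchi-1)\deg g} = q^{(\nchi - 1)\min(d_1,d_2)\deg\pi / \nchi}$ equals the Chinta--Mohler $p$-power (their $p = q^{\deg \pi}$, their exponent $(n-1)\min(j,k)/n$), and that the residue symbol $\left(\frac{\pi^{d_1}/g^{\nchi}}{\pi^{d_2}/g^{\nchi}}\right)_\chi$ matches $\left(\frac{\hat{\pi^{d_1}}}{(\pi^{d_2})_{2,0}}\right)_\chi$ — both are powers of $\left(\frac{\pi}{\pi}\right)_\chi$, hence trivial, so this reduces to bookkeeping of exponents mod $\nchi$.

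I would organize the writeup as: (i) a short paragraph establishing that $\tilde a$ satisfies twisted multiplicativity with matrix $\begin{pmatrix}0&1\\1&0\end{pmatrix}$; (ii) a paragraph doing the same for the Chinta--Mohler coefficient, which is where the real care is needed; (iii) the prime-power comparison; (iv) the coprime comparison; (v) concluding that two functions agreeing at prime powers and obeying the same twisted multiplicativity law agree everywhere. The main obstacle I anticipate is step (ii): disentangling how $f_{2,0}$ and $\hat f_1$ interact when both arguments are multiplied, since the ``coprime-to'' and ``prime-to-$\nchi$th-power'' operations do not commute cleanly with factorization, and one has to confirm that the extra residue-symbol factors produced are exactly $\left(\frac{f_i}{g_j}\right)_\chi$ and $\left(\frac{g_i}{f_j}\right)_\chi$ with the right exponents — in particular that no stray reciprocity sign appears, which uses that the relevant symbols are $\nchi$th roots of unity and $\chi$ has order exactly $\nchi$. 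A secondary check is getting the precise description of $g$ (the $\nchi$th root of the $\gcd$'s maximal $\nchi$th-power divisor) to line up with $\min(j,k) \bmod \nchi$, but that is routine once stated carefully.
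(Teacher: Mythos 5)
Your route is genuinely different from the paper's. The paper handles Lemma \ref{easiest-example-cm-1} by a short direct computation: it observes that both sides vanish exactly when $\gcd(f_1,f_2)$ fails to be an $\nchi$th power, and that in the nonvanishing case both sides share the factor $q^{(\nchi-1)\deg g}$, so the lemma reduces to the single residue-symbol identity $\left(\tfrac{f_1/g^{\nchi}}{f_2/g^{\nchi}}\right)_\chi = \left(\tfrac{\hat f_1}{f_{2,0}}\right)_\chi$. This is proved by a two-step chain through the intermediate $\left(\tfrac{f_1/g^{\nchi}}{f_{2,0}}\right)_\chi$: first the denominator $f_2/g^{\nchi}$ is replaced by $f_{2,0}$ (they differ by an $\nchi$th power prime to $f_1/g^{\nchi}$), then the numerator $f_1/g^{\nchi}$ is replaced by $\hat f_1$ (again an $\nchi$th power discrepancy, prime to $f_{2,0}$). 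That is the whole proof.

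You instead route through twisted multiplicativity of both sides plus a prime-power comparison. This is valid in principle and has the merit of isolating the ``local'' content, but it is considerably heavier machinery for what is a formula-matching exercise: you must reprove (or import from Chinta--Mohler) that $\left(\tfrac{\hat f_1}{f_{2,0}}\right)_\chi a(f_1,f_2)$ satisfies the exact same twisted multiplicativity with twist $\left(\tfrac{f_1}{g_2}\right)_\chi\left(\tfrac{g_1}{f_2}\right)_\chi$, and this is precisely the part you flag as the ``real care'' but do not execute. Note that the paper \emph{does} verify twisted multiplicativity of $\tilde a$, but only later as part of the proof of Proposition \ref{easiest-example}, where it is actually needed; for the lemma at hand the direct chain is shorter and sidesteps your step (ii) entirely. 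Two smaller issues in your sketch: the reference to $\gcd(f_1f_1',f_2f_2')$ is a slip --- for $M=\bigl(\begin{smallmatrix}0&1\\1&0\end{smallmatrix}\bigr)$ the diagonal entries vanish so no $\Res(f_i',f_i)$ factors enter and the derivatives play no role here; and the phrase ``both are powers of $\left(\tfrac{\pi}{\pi}\right)_\chi$, hence trivial'' is not quite right since the residue symbol is only defined for coprime arguments --- the correct observation is that at prime powers one of the two arguments $\pi^{d_1}/g^{\nchi}$, $\pi^{d_2}/g^{\nchi}$ (respectively one of $\hat{\pi^{d_1}}$, $(\pi^{d_2})_{2,0}$) always equals $1$, making the symbol trivially $1$. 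Also, once you have twisted multiplicativity on both sides, the coprime-pair comparison in your step (iv) is redundant: agreement at pairs $(\pi^{d_1},\pi^{d_2})$ for a single prime determines both functions everywhere.
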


\begin{proof} First we note that ${a}(f_1,f_2)$ vanishes unless $\gcd(f_1,f_2 )=g^n$ for some $g$ and is $q^{ (n-1) \deg g}$ in that case. So it suffices to check, when $\gcd(f_1,f_2) = g^n$, that
\[ \left( \frac{f_1/g^n} {f_2/g^n} \right)_\chi =  \left( \frac{ \hat{f}_1}{ f_{2,0}}\right)_\chi .\]

First note that $f_{2,0}$ divides $f_2/g^n$ and the ratio is an $\nchi$th power which is prime to $f_1/g^n$, so we have \[  \left( \frac{f_1/g^n} {f_2/g^n} \right)_\chi = \left( \frac{f_1/g^n}{ f_{2,0}} \right)_{\chi} .\]

Now $\hat{f}_1$ is the quotient of $f_1$ by a product of $\pi^{ v_\pi(f_1)}$, where $\pi$ are some primes. Each such $\pi$ divides $f_{2,0}$, so $v_\pi(f_2)$ cannot be  multiple of $\nchi$.  Since  $v_\pi(\gcd(f_1,f_2)) =\min (v_\pi(f_1),v_\pi(f_2))$ is a multiple of $\nchi$, we must have $v_\pi(f_1)$ a multiple of $\nchi$ strictly less than $v_\pi(f_2)$. Thus $f_1/ \hat{f}_1$ is an $\nchi$th power and divides $g^\nchi$, so $\hat{f}_1$ is a multiple of $f_1/g^\nchi$ by an $\nchi$th power prime to $f_{2,0}$. Thus
\[  \left( \frac{f_1/g^n}{ f_{2,0}} \right)_{\chi} =  \left( \frac{ \hat{f}_1}{ f_{2,0}}\right)_\chi \] and we are done. \end{proof}

\begin{proof}[Proof of Proposition \ref{easiest-example}] It suffices to prove that $\tilde{a}$ satisfies the axioms of Theorem \ref{axiomatics}. Axiom (2) is immediate. To check $\tilde{a}$  satisfies axiom (1), observe that if $\gcd(f_i,g_j)=1$ for all $i,j$ then $\gcd(f_1g_1,f_2g_2) = \gcd(f_1,f_2) \gcd(g_1,g_2)$, and moreover the two gcds on the right are coprime, so  $\gcd(f_1g_1,f_2g_2) $ is an $\nchi$th power if and only if both $ \gcd(f_1,f_2)$ and $ \gcd(g_1,g_2)$ are.

We next choose $J(d_1,d_2;q , \chi , M)$. We observe that $\tilde{a} ( \pi^{d_1}, \pi^{d_2}; q,\chi,M)$ vanishes unless $\min(d_1,d_2)$ is divisible by $\nchi$ and equals $q^{ (n-1) \deg \pi \min(d_1,d_2)/n}$ in that case. Hence we can take $J(d_1,d_2; q,\chi,M)$ to be empty unless $\min(d_1,d_2)$ is divisible by $\nchi$ and to consist of the ordered pair $(q^{ (n-1) \min(d_1,d_2)/n},1)$ if it is divisible.

This makes (3) immediate. (5) is similarly clear. With this value of $J$, (4) is equivalent to the statement that
\[ \sum_{ \substack{ f_1 ,f_2 \in \mathbb F_q[t]^+ \\ \deg(f_1)=d_1 \\ \deg (f_2)=d_2}} \tilde{a}(f_1,f_2; q , \chi, M ) =  \begin{cases} q^{ d_1 + d_2 - \frac{n-1}{n} \min(d_1,d_2)} & \textrm{ if } n| \min(d_1,d_2)  \\ 0 & \textrm{otherwise}  \end{cases} .\] 
We now use \citep[(1.7)]{ChintaMohler}, which is
\[ \sum_{ f_1,f_2 \in \mathbb F_q[t]^+}   \left( \frac{ \hat{f}_1}{ f_{2,0}}\right)_\chi a(f_1,f_2) x^{\deg f_1} y^{\deg f_2} =  \frac{ 1-q^2 xy}{ (1-qx) (1-qy) (1 - q^{\nchi+1} x^{\nchi}y^{\nchi}) }.\]

Thus, by Lemma \ref{easiest-example-cm-1},
\begin{equation}\label{easiest-example-gf} \sum_{ f_1,f_2 \in \mathbb F_q[t]^+}   \tilde{a}(f_1,f_2; q,\chi, M) x^{\deg f_1} y^{\deg f_2} =  \frac{ 1-q^2 xy}{ (1-qx) (1-qy) (1 - q^{\nchi+1} x^{\nchi} y^{\nchi}) }.\end{equation}

Then \[ \sum_{ \substack{ f_1 ,f_2 \in \mathbb F_q[t]^+ \\ \deg(f_1)=d_1 \\ \deg (f_2)=d_2}} \tilde{a}(f_1,f_2)  \] is simply the coefficient of $x^{d_1} y^{d_2}$ in \eqref{easiest-example-gf}. Hence to verify (4) it suffices to check that 
\[ \frac{ 1-q^2 xy}{ (1-qx) (1-qy) (1 - q^{n+1} x^{\nchi} y^{\nchi}) } = \sum_{\substack{ d_1,d_2 \in \mathbb N \\ \min(d_1,d_2) \equiv 0\bmod n}}   q^{ d_1 + d_2 - (\nchi-1) \min(d_1,d_2)/\nchi}  x^{d_1} y^{d_2} \]
which is straightforward.\end{proof}

\begin{cor}\label{easiest-example-cm-2} For all finite fields $\mathbb F_q$, characters $\chi$, and monic polynomials $f_1,f_2$ over $\mathbb F_q$, we have
\[  {a}(f_1,f_2; q,\chi,  \begin{pmatrix} 0 & 1 \\ 1 & 0 \end{pmatrix})=  \left( \frac{ \hat{f}_1}{ f_{2,0}}\right)_\chi a(f_1,f_2).\]
\end{cor}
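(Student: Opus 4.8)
The plan is to obtain this corollary by simply chaining together the two results already established for the matrix $M = \begin{pmatrix} 0 & 1 \\ 1 & 0 \end{pmatrix}$. Proposition \ref{easiest-example} identifies the geometrically defined coefficient $a(f_1,f_2;q,\chi,M)$ with the explicit function $\tilde a(f_1,f_2;q,\chi,M)$ given by the case distinction on whether $\gcd(f_1,f_2)$ is an $\nchi$th power. Lemma \ref{easiest-example-cm-1} identifies this same $\tilde a(f_1,f_2;q,\chi,M)$ with $\left( \frac{\hat f_1}{f_{2,0}} \right)_\chi a(f_1,f_2)$, where the $a(f_1,f_2)$ on the right-hand side is the Chinta--Mohler multiplicative function of \citep[(1.2)]{ChintaMohler}.

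Concatenating these two identities yields precisely the claimed equality, so the proof is a one-line deduction. All the substantive content has already been carried out: the first identity rests on Theorem \ref{axiomatics} together with the axiom verification performed in the proof of Proposition \ref{easiest-example}, and the second is the elementary divisibility bookkeeping in Lemma \ref{easiest-example-cm-1} comparing $f_1/g^{\nchi}$, $\hat f_1$, $f_2/g^{\nchi}$, and $f_{2,0}$. Consequently there is no remaining obstacle for this corollary; it is purely a matter of recording the composition of the two preceding statements.
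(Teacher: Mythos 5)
Your proposal is correct and is exactly the paper's own proof: the corollary is obtained by chaining Proposition \ref{easiest-example} (identifying $a(f_1,f_2;q,\chi,M)$ with $\tilde a$) and Lemma \ref{easiest-example-cm-1} (identifying $\tilde a$ with the Chinta--Mohler expression). Nothing further is required.
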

\begin{proof} This follows from combining Lemma \ref{easiest-example-cm-1} and Proposition \ref{easiest-example}. \end{proof}

\begin{prop}\label{gauss-example} Assume $\nchi$ even. 

 Take $r=2$, $M= \begin{pmatrix} 0 & -1 \\ -1 & \frac{n}{2}+1  \end{pmatrix} $. 

Then \begin{equation}\label{eq-gauss-example} a (f_1,f_2; q ,\chi , M ) =   \frac{(-1)^{ \frac{ \deg f_2 (\deg f_2-1) (q-1)}{4}}  }{  G(\chi,\psi)^{\deg f_2}}   \sum_{ \substack{ u \in \mathbb F_q[t]^+ \\ u^n | f_2}}  q^{ (n-1)  \deg u} g_\chi(f_1, f_2/u^n ) . \end{equation}

\end{prop}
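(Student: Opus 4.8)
The plan is to show that the function $\tilde{a}(f_1,f_2;q,\chi,M)$ defined by the right-hand side of \eqref{eq-gauss-example} satisfies the five axioms of Theorem \ref{axiomatics}, from which the equality $a = \tilde{a}$ follows by the uniqueness clause. I would begin by recording the normalization axioms: axiom (2) is immediate since for $f_2 = 1$ the sum has a single term $u=1$ with $g_\chi(f_1,1) = 1$ and the sign factor is $1$, and for a linear $f_2$ only $u=1$ contributes (as $\deg f_2 = 1 < n$), giving $g_\chi(f_1,f_2) = 1$ by Lemma \ref{gauss-prime-powers} (the case $d_2 = 1$, $d_1 = 0$, using $G(\chi,\psi)$ to cancel) — actually more carefully, $f_2$ linear forces $f_1$ a power of that same linear prime only when we are testing $a(1,\dots,1,f,1,\dots,1)$, where $f_1 = 1$, so $g_\chi(1, T-x) = -G(\chi,\psi) \cdot (\text{root-of-}(T-x)') / \cdots$; I would verify the sign and Gauss sum factors cancel exactly against the prefactor.

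The heart of the argument is the twisted multiplicativity, axiom (1), with the specific matrix $M = \begin{pmatrix} 0 & -1 \\ -1 & \frac{n}{2}+1 \end{pmatrix}$. Here I would decompose the defining sum using the coprimality $\gcd(f_i,g_j) = 1$: any divisor $u$ with $u^n \mid f_2 g_2$ factors uniquely as $u = u_f u_g$ with $u_f^n \mid f_2$, $u_g^n \mid g_2$, and the two factors coprime. The Gauss sum $g_\chi(f_1 g_1, (f_2 g_2)/u^n)$ then splits via Lemma \ref{gauss-multiplicative} into $g_\chi(f_1, f_2/u_f^n) g_\chi(g_1, g_2/u_g^n)$ times a product of residue symbols; the power-of-$q$ factors $q^{(n-1)\deg u}$ multiply as $q^{(n-1)\deg u_f} q^{(n-1)\deg u_g}$; and the sign prefactor $(-1)^{\deg f_2 (\deg f_2 - 1)(q-1)/4}$ is quasi-multiplicative in $f_2 g_2$ up to a correction of the form $(-1)^{\deg f_2 \deg g_2 (q-1)/2} = \xi(-1)^{\deg f_2 \deg g_2}$, which I would need to match against reciprocity factors. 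Collecting everything, the leftover residue symbols should assemble into exactly $\left(\frac{f_1}{g_2}\right)_\chi^{-1}\left(\frac{g_1}{f_2}\right)_\chi^{-1} \left(\frac{f_2}{g_2}\right)_\chi^{n/2+1}\left(\frac{g_2}{f_2}\right)_\chi^{n/2+1}$ — matching $M_{12} = -1$ and $M_{22} = n/2 + 1$ — using Lemma \ref{reciprocity}/Lemma \ref{resultant} to convert the $\xi$-signs coming from $\left(\frac{f_2'}{f_2}\right)_\xi$-type terms and from reciprocity into the stated form. This bookkeeping of signs and residue symbols is the main obstacle: one must track the $\xi(-1)$ factors from $G$-sum evaluations (Lemma \ref{gauss-evaluation}), from the discriminant-to-resultant conversion, and from reciprocity simultaneously, and confirm they conspire correctly given that $n$ is even so $\xi = \chi^{n/2}$.

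For axioms (3), (4), (5) I would first compute the prime-power coefficients. Setting $f_1 = \pi^{d_1}$, $f_2 = \pi^{d_2}$, the sum over $u$ with $u^n \mid \pi^{d_2}$ ranges over $u = \pi^k$ with $nk \le d_2$, and $g_\chi(\pi^{d_1}, \pi^{d_2 - nk})$ is given explicitly by Lemma \ref{gauss-prime-powers}. Plugging in, the sign and $G(\chi,\psi)$ prefactors cancel against the $d_1 = d_2 - 1$ case of that lemma, and one extracts a clean formula of the form $\left(\frac{\pi'}{\pi}\right)_\chi^{(n/2+1)d_2} \sum_j c_j \alpha_j^{\deg\pi}$; reading off $\sum_i d_i M_{ii} = (n/2+1)d_2$ confirms the exponent in axiom (3), and this simultaneously defines $J(d_1,d_2;q,\chi,M)$ as a small explicit set of Weil numbers (powers of $q$, possibly times $-G(\chi^{d_2},\psi)$ in the boundary case). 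I would check $J$ is a compatible system using the Hasse-Davenport sign-compatibility of $G(\chi^{d_2},\psi)$ noted in the excerpt, and check the size bound (5) directly from the explicit $\alpha_j$. Finally, for axiom (4) I would either invoke a generating-function identity for $\sum_{\deg f_1 = d_1, \deg f_2 = d_2} \tilde{a}$ — deriving it from the twisted multiplicativity already established plus the prime-power values, in the spirit of the Chinta–Mohler computation used in Proposition \ref{easiest-example} — or, if available in \citep{ChintaMohler} or \citep{Chinta}, cite the relevant closed form and match coefficients; either way the computation reduces to a finite check against $\sum_j c_j q^{d_1 + d_2}/\overline{\alpha}_j$.
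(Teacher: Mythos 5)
Your proposal is essentially the paper's second proof (the one ``using Chinta--Mohler''): verify the axioms for the explicit candidate $\tilde a$, leaning on Lemma~\ref{gauss-multiplicative} for axiom~(1), Lemma~\ref{gauss-prime-powers} for the prime-power values, and the Chinta--Mohler generating-function identity for axiom~(4). The one organizational difference is that the paper does not check Theorem~\ref{axiomatics} directly for $\tilde a$: it instead multiplies through by $G(\chi,\psi)^{\deg f_2}$ to form $\tilde a^*$ and verifies the axioms of Corollary~\ref{axiomatics-2} with $w_1=\epsilon_1=0$, $w_2=\epsilon_2=1$, $\gamma_2=G(\chi,\psi)$. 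This absorbs the sign prefactor $(-1)^{\deg f_2(\deg f_2-1)(q-1)/4}$ and the $G(\chi,\psi)^{-\deg f_2}$ normalization into the $\gamma$'s and the $(-1)^{\sum\epsilon_i d_i(\deg\pi+1)}$ term of axiom~(3), so that the Hasse--Davenport sign-compatibility is handled once and for all by the corollary rather than being re-derived while assembling $J$. Your route works, but you will have to track that $J$ built from your $\tilde a$ involves ratios $\alpha_j / G(\chi,\psi)^{d_2}$ together with a compensating $(-1)^{d_2}$, and check compatibility of this rescaled $J$ using $-G(\chi_e,\psi_e)=(-G(\chi,\psi))^e$; this is exactly the bookkeeping that Corollary~\ref{axiomatics-2} is designed to package away. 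You should also be aware that the paper gives a second, independent proof by a geometric argument: Lemma~\ref{ge-geom-1} establishes the identity on the squarefree-coprime locus from the definitions and Lemma~\ref{gauss-evaluation}, and Lemma~\ref{ge-geom-2} propagates it by expressing the right-hand side as the trace function of $\sigma^*\mathcal F_\psi K_{d_2,d_2,M'}\otimes\alpha^*\mathcal L_\psi\otimes\mathcal L_G$, showing this is an irreducible perverse sheaf, and matching it with $K_{d_1,d_2,M}$; your proposal does not touch that argument, which is fine since either proof suffices.
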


Let \[\tilde{a }\left(f_1,f_2; q ,\chi , \begin{pmatrix} 0 & -1 \\ -1 & \frac{n}{2}+1  \end{pmatrix}\right ) =   \frac{(-1)^{ \frac{ \deg f_2 (\deg f_2-1) (q-1)}{4}}  }{  G(\chi,\psi)^{\deg f_2}}   \sum_{ \substack{ u \in \mathbb F_q[t]^+ \\ u^n | f_2}}  q^{ (n-1)  \deg u} g_\chi(f_1, f_2/u^n ) .\] 

We give two proofs of Proposition \ref{gauss-example}. The first uses geometric properties of perverse sheaves, while the second relies on Theorem \ref{axiomatics} and \citep{ChintaMohler}.

The geometric proof proceeds by a series of lemmas that establish \eqref{eq-gauss-example} in successively more cases.

\begin{lemma}\label{ge-geom-1}  \eqref{eq-gauss-example} holds when $f_2$ is squarefree and $f_1$ and $f_2$ are coprime. \end{lemma}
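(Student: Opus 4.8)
The plan is to verify that, on the locus where $f_2$ is squarefree and $\gcd(f_1,f_2)=1$, both sides of \eqref{eq-gauss-example} are equal to $\chi$ evaluated at the function $F_{d_1,d_2}=\Res(f_2',f_2)^{n/2+1}\Res(f_1,f_2)^{-1}$ obtained by specializing the general $F$ to the matrix $M$ at hand, hence are equal to one another.

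For the right-hand side: since $f_2$ is squarefree and $n$ is even (so $n\geq 2$), the only $u\in\mathbb F_q[t]^+$ with $u^n\mid f_2$ is $u=1$, so $\tilde a(f_1,f_2;q,\chi,M)$ reduces to $(-1)^{\frac{\deg f_2(\deg f_2-1)(q-1)}{4}}\,G(\chi,\psi)^{-\deg f_2}\,g_\chi(f_1,f_2)$. Evaluating $g_\chi(f_1,f_2)$ with Lemma \ref{gauss-evaluation} (applicable since $f_2$ is squarefree and coprime to $f_1$), the two occurrences of $(-1)^{\frac{\deg f_2(\deg f_2-1)(q-1)}{4}}$ and of $G(\chi,\psi)^{\deg f_2}$ cancel, leaving $\tilde a(f_1,f_2;q,\chi,M)=\left(\frac{f_2'}{f_2}\right)_\chi\left(\frac{f_2'}{f_2}\right)_\xi\left(\frac{f_1}{f_2}\right)_\chi^{-1}$. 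Rewriting $\left(\frac{f_2'}{f_2}\right)_\xi=\left(\frac{f_2'}{f_2}\right)_\chi^{n/2}$ via $\xi=\chi^{n/2}$ and then passing to $\chi$ of resultants by Lemma \ref{resultant}, this equals $\chi\!\left(\Res(f_2',f_2)^{n/2+1}\Res(f_1,f_2)^{-1}\right)=\chi(F_{d_1,d_2}(f_1,f_2))$.

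For the left-hand side: the pairs $(f_1,f_2)$ with $f_2$ squarefree and $\gcd(f_1,f_2)=1$ are precisely those where $\Res(f_2',f_2)\neq 0$ and $\Res(f_1,f_2)\neq 0$, i.e.\ where $F_{d_1,d_2}$ is invertible; as $\prod_i\mathbb A^{d_i}$ is smooth, this is exactly the open set $U$ on which $K_{d_1,d_2}=IC_{\mathcal L_\chi(F_{d_1,d_2})}$ restricts to the lisse rank-one Kummer sheaf $\mathcal L_\chi(F_{d_1,d_2})$ (placed in degree zero by the shift $[-d]$ in the definition of $IC$). So for such $(f_1,f_2)$ the trace of Frobenius on the stalk of $K_{d_1,d_2}$ is the trace of Frobenius on the stalk of $\mathcal L_\chi(F_{d_1,d_2})$, which by construction of $\mathcal L_\chi$ is $\chi(F_{d_1,d_2}(f_1,f_2))$. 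Comparing with the previous paragraph gives $a(f_1,f_2;q,\chi,M)=\tilde a(f_1,f_2;q,\chi,M)$, which is \eqref{eq-gauss-example} in this case.

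I expect no real obstacle here — the content is the cancellation of the sign and Gauss-sum factors together with the observation that $K_{d_1,d_2}$ is simply a Kummer sheaf on the relevant open set. The one point to keep straight is the sign/inverse convention for the trace function of $\mathcal L_\chi$, so that it matches $\chi$ of the value; the genuinely delicate cases, where $f_2$ is not squarefree and $K_{d_1,d_2}$ differs from a Kummer sheaf, are the business of the later lemmas.
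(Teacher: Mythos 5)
Your proof is correct and takes essentially the same approach as the paper's: both identify $a(f_1,f_2;q,\chi,M)$ on this locus with $\chi$ of the (nonvanishing, nonpolar) function $F_{d_1,d_2}$ via the definition of $IC_{\mathcal L_\chi(F)}$, and both reduce the right-hand side of \eqref{eq-gauss-example} to a single Gauss sum and apply Lemma \ref{gauss-evaluation} together with $\xi=\chi^{n/2}$. Your write-up is slightly more explicit in landing both sides on $\chi(F_{d_1,d_2}(f_1,f_2))$; as a small note, the paper's proof has an apparent typo, writing $\left(\frac{f_2}{f_1}\right)_\chi^{-1}$ where it should read $\left(\frac{f_1}{f_2}\right)_\chi^{-1}$, which you have correctly.
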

\begin{proof} The trace function of $\mathcal L_\chi(f)$ is $\chi(f)$ on any open set where $f$ is a nonvanishing polynomial function. On the open set where $f_2$ is squarefree and $f_1$ and $f_2$ are coprime, $F_{d_1,d_2}$ is a nonvanishing polynomial (the condition that $f_1$ is squarefree being unneeded since $M_{11}=0$) so   \[a(f_1,f_2;q,\chi,M)=\chi( F_{d_1,d_2} (f_1,f_2) )= \left( \frac{f_2'}{f_2} \right)_\chi^{n/2+1}   \left( \frac{f_2}{f_1} \right)_\chi ^{-1}\]  by Lemma \ref{resultant}.

On the other hand, when $f_2$ is squarefree we have \[  \sum_{ \substack{ u \in \mathbb F_q[t]^+ \\ u^n | f_2}}  q^{ (n-1)  \deg u} g_\chi(f_1, f_2/u^n ) = g_\chi(f_1,f_2)\] and by Lemma \ref{gauss-evaluation},
\[  \frac{(-1)^{ \frac{ \deg f_2 (\deg f_2-1) (q-1)}{4}}  }{  G(\chi,\psi)^{\deg f_2}} g_\chi(f_1,f_2) =  \left(\frac{ f_2'}{f_2} \right)_{\chi} \left(\frac{f_2'}{f_2} \right)_{\xi}\left(\frac{f_1}{f_2} \right)_{\chi}^{-1} \]
so \eqref{eq-gauss-example} follows upon noting that \[ \left(\frac{f_2'}{f_2} \right)_\chi \left(\frac{ f_2'}{f_2} \right)_{\xi} = \left( \frac{f_2'}{f_2} \right)_\chi^{n/2+1}   .\]
\end{proof}

\begin{lemma}\label{ge-geom-2} \eqref{eq-gauss-example} holds when $\deg f_1 \geq \deg f_2$. \end{lemma}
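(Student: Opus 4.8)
The plan is to use Euclidean division to discard the part of $f_1$ that $F$ does not see, reducing to a sheaf on a space independent of $\deg f_1$, and then to analyze that sheaf first over the squarefree locus (where it is essentially a Kummer sheaf in Vandermonde coordinates) and finally in general.

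First I would set $d_i=\deg f_i$, assume $d_1\geq d_2$, and use that the map $\mathbb A^{d_1}\times\mathbb A^{d_2}\to\mathbb A^{d_1-d_2}\times\mathbb A^{d_2}\times\mathbb A^{d_2}$ sending $(f_1,f_2)$ to $(\lfloor f_1/f_2\rfloor,\,f_1\bmod f_2,\,f_2)$ (with the middle factor the space of polynomials of degree $<d_2$) is an isomorphism of varieties. Since $\Res(f_1,f_2)$ depends on $f_1$ only modulo $f_2$, $F_{d_1,d_2}$ pulls back through this isomorphism to $\widetilde F(\rho,f_2)=\Res(f_2',f_2)^{n/2+1}\Res(\rho,f_2)^{-1}$, which is independent of the first factor; so by Lemma \ref{basic-IC}(1,2,3) the sheaf $K_{d_1,d_2}$ becomes $\Ql\boxtimes IC_{\mathcal L_\chi(\widetilde F)}$ on $\mathbb A^{d_1-d_2}\times(\mathbb A^{d_2}\times\mathbb A^{d_2})$. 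Hence for $\deg f_1\geq\deg f_2$ the coefficient $a(f_1,f_2;q,\chi,M)$ depends only on $(f_1\bmod f_2,f_2)$ and equals the trace of Frobenius on the stalk of $IC_{\mathcal L_\chi(\widetilde F)}$ at that pair. The right-hand side of \eqref{eq-gauss-example} also depends only on $(f_1\bmod f_2,f_2)$, because the residue of $h f_1/g$ is unchanged by adding a polynomial multiple of $g$ to $f_1$, so $g_\chi(f_1,g)$ depends on $f_1$ only mod $g$. It therefore suffices to prove that the trace function of $IC_{\mathcal L_\chi(\widetilde F)}$ on $\mathbb A^{d_2}\times\mathbb A^{d_2}$ agrees with the right-hand side of \eqref{eq-gauss-example} everywhere.

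On the locus where $f_2$ is squarefree, $\Res(f_2',f_2)$ is a unit, so $IC_{\mathcal L_\chi(\widetilde F)}\cong \mathcal L_\chi(\Res(f_2',f_2))^{\otimes(n/2+1)}\otimes IC_{\mathcal L_{\chi^{-1}}(\Res(\cdot,f_2))}$ by Lemma \ref{basic-IC}(1). Over the splitting field of $f_2$ the linear forms $\rho\mapsto\rho(\beta)$, for $\beta$ running over the $d_2$ distinct roots, are linearly independent (Vandermonde), so $\Res(\rho,f_2)=\prod_\beta\rho(\beta)$ is a product of coordinates in a suitable basis; thus $IC_{\mathcal L_{\chi^{-1}}(\Res(\cdot,f_2))}$ is a box product of the middle extensions to $\mathbb A^1$ of Kummer sheaves on $\mathbb G_m$, each of which is extension by zero, and its trace function is $\rho\mapsto\chi^{-1}(\Res(\rho,f_2))=\left(\frac{\rho}{f_2}\right)_\chi^{-1}$ for every $\rho$ (both sides vanishing when $\gcd(\rho,f_2)\neq1$). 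By Lemma \ref{resultant} the trace function of $IC_{\mathcal L_\chi(\widetilde F)}$ at $(\rho,f_2)$ is then $\left(\frac{f_2'}{f_2}\right)_\chi^{n/2+1}\left(\frac{\rho}{f_2}\right)_\chi^{-1}$. Matching this with \eqref{eq-gauss-example} for squarefree $f_2$ is then a computation: only $u=1$ contributes, $g_\chi(f_1,f_2)$ is evaluated by Lemma \ref{gauss-evaluation} when $\gcd(f_1,f_2)=1$ and vanishes otherwise (by multiplicativity in $f_2$ and vanishing of $g_\chi(f_1,\pi)$ for $\pi\mid f_1$), and $\xi=\chi^{n/2}$ converts $\left(\frac{f_2'}{f_2}\right)_\chi\left(\frac{f_2'}{f_2}\right)_\xi$ into $\left(\frac{f_2'}{f_2}\right)_\chi^{n/2+1}$. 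This reproves Lemma \ref{ge-geom-1} and extends it to arbitrary $f_1$ with $f_2$ squarefree.

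The remaining step — passing from the open dense squarefree locus to all of $\mathbb A^{d_2}\times\mathbb A^{d_2}$ — is the main obstacle. I would first reduce to $f_2=\pi^k$ a prime power: if $f_2=f_2'f_2''$ with $\gcd(f_2',f_2'')=1$, then under the Chinese remainder isomorphism $\rho\mapsto(\rho\bmod f_2',\rho\bmod f_2'')$ and Lemma \ref{discriminant-multiplicative}, $\widetilde F$ factors up to the cross term $\Res(f_2',f_2'')^{n/2+1}\Res(f_2'',f_2')^{n/2+1}$, a unit on the coprime locus, so by Lemma \ref{basic-IC}(1,2,3) the coefficient is multiplicative in $f_2$ with a residue-symbol twist; using Lemmas \ref{gauss-multiplicative-easy}, \ref{gauss-multiplicative}, and \ref{reciprocity} one checks the right-hand side of \eqref{eq-gauss-example} is multiplicative with the same twist (the $n$th-power parts $u^n$ contribute only $n$th powers of residue symbols, which drop out), so it is enough to treat $f_2=\pi^k$. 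For $f_2=\pi^k$, translation invariance (Lemma \ref{translation-invariance}) handles $\pi=T-x$ with $x\in\mathbb F_q$, and the Weil restriction/norm argument from the proof of Lemma \ref{axiom-iii} (Lemmas \ref{norms}, \ref{weil-restriction-trace-function}) reduces a prime of degree $e$ to $\pi=T$ over $\mathbb F_{q^e}$, leaving the computation of the stalk of $IC_{\mathcal L_\chi(\widetilde F)}$ at the totally degenerate point $(\rho,T^k)$. Here I expect to use the contracting $\mathbb G_m$-action scaling $\rho$ and fixing $T^k$, under which $\widetilde F$ is homogeneous, together with Lemmas \ref{cone-contractible} and \ref{equivariant-vanishing} to express this stalk through a cohomology group, and Verdier duality (Lemma \ref{basic-IC}(4)) as in the proof of Lemma \ref{axiom-iv}, then match the answer with \eqref{eq-gauss-example} via the prime-power Gauss sum evaluations of Lemma \ref{gauss-prime-powers}. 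The delicate point is this last local computation where the middle extension is genuinely singular; an alternative I would keep in reserve is to build, from the stratification of $\mathbb A^{d_2}$ by the maximal $n$th-power divisor of $f_2$ (the origin of the sum over $u$), an explicit complex whose trace function is the right-hand side of \eqref{eq-gauss-example}, and to identify it with $IC_{\mathcal L_\chi(\widetilde F)}$ by uniqueness of the middle extension together with the squarefree computation above.
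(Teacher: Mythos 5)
Your proposal takes a genuinely different route from the paper's, but it has a gap at the crucial last step. The Euclidean-division observation is nice and correct: $(f_1,f_2)\mapsto(\lfloor f_1/f_2\rfloor,f_1\bmod f_2,f_2)$ is an isomorphism, $F_{d_1,d_2}$ factors through it, and so $K_{d_1,d_2}$ splits off a constant box factor, reducing the problem to the stalk of $IC_{\mathcal L_\chi(\widetilde F)}$ on $V\times\mathbb A^{d_2}$; the CRT reduction to $f_2$ a prime power is also plausible modulo checking the multiplicativity of the right side. But the final step you flag as ``the delicate point'' is in fact the whole difficulty, and the tools you reach for do not close it. Lemma \ref{cone-contractible} and the argument of Lemma \ref{axiom-iv} compute \emph{cohomology} of a $\mathbb G_m$-invariant complex and relate it to the stalk at the \emph{unique fixed point}; here you would learn something about the stalk of the restriction to $V\times\{T^k\}$ at $\rho=0$ only. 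It would say nothing about the stalks at $\rho\neq 0$, which you also need, and it is not clear that restricting $IC_{\mathcal L_\chi(\widetilde F)}$ to the slice $V\times\{T^k\}$ (where both $\Res(f_2',f_2)$ and $\Res(\rho,f_2)$ degenerate simultaneously) produces the IC sheaf of the restriction in the first place — this is precisely the kind of non-transverse slice where $i^*$ of a middle extension can fail to be a middle extension.

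The paper sidesteps all stalk computations at singular points via a different mechanism you don't mention: the right-hand side of \eqref{eq-gauss-example}, rewritten as $\sum_h a(h,f_2;M')\,\psi(\res(hf_1/f_2))$ using Proposition \ref{easiest-example}, is recognized as an $\ell$-adic Fourier transform $\mathcal F_\psi K_{d_2,d_2,M'}$, pulled back by a smooth map $\sigma$ and twisted by Artin--Schreier and constant sheaves. Fourier transform preserves perversity and irreducibility, $\sigma$ is smooth with geometrically connected fibers, so one obtains an irreducible perverse sheaf whose trace function is the right-hand side. Since $K_{d_1,d_2,M}$ is also an irreducible perverse sheaf, and the two trace functions agree on the dense open of Lemma \ref{ge-geom-1}, the two perverse sheaves are isomorphic (both being middle extensions of the same lisse sheaf from that open), and equality of trace functions holds everywhere. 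Your ``reserve'' alternative — build an explicit complex with the right trace function and identify it with the IC sheaf by uniqueness of middle extension — is exactly this strategy in spirit, but the key insight you need is that the Fourier transform is the way to produce that complex. Without it, your direct stalk computation at $(\rho,T^k)$ is a genuinely hard local problem for which your sketch does not supply an argument.
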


\begin{proof}
Let $M' = \begin{pmatrix} 0 & 1 \\ 1 & 0 \end{pmatrix}$. Observe that \[\sum_{ \substack{ u \in \mathbb F_q[t]^+ \\ u^n | f_2}}  q^{ (n-1)  \deg u} g_\chi(f_1, f_2/ u^n)  = \sum_{ \substack{ u \in \mathbb F_q[t]^+ \\ u^n | f_2}}  q^{ (n-1)  \deg u} \sum_{\substack{ h \in \mathbb F_q[t]/ f_2 \\ u^n | h} } \left( \frac{h/u^n}{f_2/u^n} \right)_\chi \psi \left( \operatorname{res} \left(\frac{h f_1}{ f_2} \right) \right)\]
\[  =  \sum_{  \substack{ h \in \mathbb F_q[t]^+ \\ \deg h = \deg f_2}}  \sum_{ \substack{ u \in \mathbb F_q[t]^+ \\ u^n \mid h, f_2}}  q^{ (n-1)  \deg u} \left( \frac{h/u^n}{f_2/u^n} \right)_\chi \psi \left( \operatorname{res} \left(\frac{h f_1}{ f_2} \right) \right)= \sum_{  \substack{ h \in \mathbb F_q[t]^+ \\ \deg h = \deg f_2}}  a\left( h,f_2; q ,\chi , M' \right) \psi \left( \operatorname{res} \left(\frac{h f_1}{ f_2} \right) \right)\] using the fact that there is a unique monic $h$ of degree $\deg f_2$ in each residue class mod $f_2$, the fact that $\left( \frac{h/u^n}{f_2/u^n} \right)_\chi =0$ unless $u^n=\gcd(h,f_2)$, and Lemma \ref{easiest-example}. Thus
\begin{equation}\label{fourier-for-fe} \tilde{a }\left(f_1,f_2; q ,\chi , \begin{pmatrix} 0 & -1 \\ -1 & \frac{n}{2}+1  \end{pmatrix}\right ) =   \frac{(-1)^{ \frac{ \deg f_2 (\deg f_2-1) (q-1)}{4}}  }{  G(\chi,\psi)^{\deg f_2}}   \sum_{  \substack{ h \in \mathbb F_q[t]^+ \\ \deg h = \deg f_2}}  a\left( h,f_2; q ,\chi,  M' \right) \psi \left( \operatorname{res} \left(\frac{h f_1}{ f_2} \right) \right).\end{equation}

Let $d_1=\deg f_1$ and $d_2 = \deg f_2$.

We now make a geometric argument.  To distinguish IC sheaves constructed with the matrix $M'$ from those constructed with the matrix $M$, we put the matrix as an additional subscript. Thus $K_{d_2,d_2,  M'}$ is a complex of sheaves on $\mathbb A^{d_2} \times \mathbb A^{d_2}$ whose trace function is $a( h,f_2; q ,\chi  ,M' ) $ and $K_{d_1,d_2,M}$ is a complex of sheaves on $\mathbb A^{d_1} \times \mathbb A^{d_2}$ whose trace function is $a( h,f_2; q ,\chi  ,M)$.

We now recall the $\ell$-adic Fourier transform defined by \citet{KL}. We define two maps $\mathbb A^{d_2} \times \mathbb A^{d_2} \times \mathbb A^{d_2} \to \mathbb A^{d_2} \times \mathbb A^{d_2}$, namely $pr_{13}$ and $pr_{23}$, given respectively by projection onto the first and third factors, and projection onto the second and third factors. We also define a map $\mu \colon \mathbb A^{d_2} \times \mathbb A^{d_2} \times \mathbb A^{d_2}\to \mathbb A^1$ given by taking the dot product of the first and second factors.  Precisely in coordinates, we think of points of the second $\mathbb A^{d_2}$ as parameterizing monic polynomials $t^n+ \sum_{i=0}^{d_2-1} c_i t^i  $, points of the first $\mathbb A^{d_2}$ as simply tuples $b_0, \dots, b_{d_2-1}$, and $\mu( (b_0,\dots, b_{d_2-1}), (c_0,\dots, c_{d_2-1}), f_2 ) = \sum_{i=0}^{d_2-1} b_i c_i$.

\citet[(2.1.1)]{KL} define the Fourier transform $\mathcal F_\psi $ by the formula  
\[ \mathcal F_{\psi} K_{d_2,d_2,  M'}= pr_{13!} ( pr_{23}^*K_{d_2,d_2,  M'} \otimes \mu^* \mathcal L_\psi ) [d_2].\]
The operations of pullback, compactly supported pushforward, tensor product, and shift each transform the trace function in a predictable way. Using this, it is immediate that the trace function of $\mathcal F_{\psi} K_{d_2,d_2,  M'}$ at a point $(\mathbf{b}, f_2)$ of $\mathbb A^{d_2} \times \mathbb A^{d_2}$ is given by the formula
\[(-1)^{d_2}  \sum_{  \substack{ h \in \mathbb F_q[t]^+ \\ \deg h = \deg f_2}}  a\left( h,f_2; q ,\chi , M' \right) \psi \left( h \cdot \mathbf b \right).\] 

Let $\sigma\colon \mathbb A^{d_1} \times \mathbb A^{d_2} \to \mathbb A^{d_1} \times \mathbb A^{d_2}$ be the map sending $(f_1,f_2)$ to $(\mathbf b, f_2)$ where $b_i =  \operatorname{res} \left( \frac{ t^i f_1}{ f_2} \right)$. Let $\alpha \colon  \mathbb A^{d_1} \times \mathbb A^{d_2} \to \mathbb A^1 $ send $(f_1,f_2)$ to $\operatorname{res} \left( \frac{t^{d_2 } f_1}{ f_2} \right)$. We have chosen these so that for $(\mathbf b, f_2)=\sigma(f_1,f_2)$, we have 
\[\alpha(f_1,f_2) + h \cdot \mathbf b = \operatorname{res} \left( \frac{t^{d_2} f_1}{ f_2} \right) + \sum_{i=0}^{d_2-1} c_i  \operatorname{res} \left( \frac{t^i f_1}{ f_2} \right)=  \operatorname{res} \left( \frac{( t^{d_2} + \sum_{i=0}^{n-1} c_i t^i )  f_1}{ f_2} \right) = \operatorname{res} \left(\frac{h f_1}{f_2} \right).\]

Thus the trace function of 
\[ \sigma^*\mathcal F_{\psi} K_{d_2,d_2,  M'} \otimes \alpha^* \mathcal L_\psi \] is given by
\[ (-1)^{d_2} \sum_{  \substack{ h \in \mathbb F_q[t]^+ \\ \deg h = \deg f_2}}  a\left( h,f_2; q ,\chi , M' \right) \psi \left( \operatorname{res} \left(\frac{h f_1}{ f_2} \right) \right) = \frac{  (-G(\chi,\psi))^{d_2}}
 {(-1)^{ \frac{ d_2 (d_2-1) (q-1)}{4}}  } \tilde{a }\left(f_1,f_2; q ,\chi , M\right ).\]
 
 By the Hasse-Davenport relations, the quantity $-G(\chi,\psi)$ is a compatible system of Weil numbers, and the same is true for $(-1)^{\frac{q-1}{2}}= \xi(-1)$, so there exists a sheaf $\mathcal L_G$ on $\operatorname{Spec} \mathbb F_p$ whose trace of $\operatorname{Frob}_q$ is $ \frac
 {(-1)^{ \frac{ d_2 (d_2-1) (q-1)}{4}}  }{  (-G(\chi,\psi))^{d_2}}$ for all finite fields $\mathbb F_q$. It follows that the trace function of $\sigma^*\mathcal F_{\psi} K_{d_2,d_2,  M'} \otimes \alpha^* \mathcal L_\psi \otimes \mathcal L_G$ is $ \tilde{a }(f_1,f_2; q ,\chi , M )$.
 
Next let's check that $ \sigma^*\mathcal F_{\psi} K_{d_2,d_2,  M'} \otimes \alpha^* \mathcal L_\psi \otimes \mathcal L_G [d_1+ d_2]$ is an irreducible perverse sheaf. The complex $K_{d_2,d_2,  M'} [2 d_2]$ is perverse by construction. Fourier transform preserves perversity by the same argument as \cite[Corollary 2.1.5(iii)]{KL}, which shows Fourier transform preserves relative perversity, and preserves irreducibility by an immediate consequence of \cite[III, Theorem 8.1(3)]{Kiehl-Weissauer}. We can check that $\sigma$ is smooth because for each fixed value of $f_2$,  $\sigma$ is given by a linear map of vector spaces, and this linear map is surjective because $(h,f_1) \mapsto \operatorname{res} \left(\frac{h f_1}{f_2}\right)$ gives a perfect pairing on polynomials modulo $f_2$. This also shows that $\sigma$ has nonempty, geometrically connected fibers. Since the source of $\sigma$ has dimension $d_1+d_2$ and the target has dimension $2d_2$, the map $\sigma$ must be smooth of relative dimension $d_1-d_2$, so $\sigma$ preserves perversity after a shift by $d_1-d_2$. Because $\sigma$ has nonempty, geometrically connected fibers, this pullback and shift functor is fully faithful and thus preserves irreducibility  \citep*[Corollary 4.2.6.2]{bbd}.  Finally, $\mathcal L_\psi$ is lisse of rank one so its pullback under $\alpha$ is lisse of rank one and tensor product with it preserves perversity and irreducibility, and the same is true for $\mathcal L_G$.

So $ \sigma^*\mathcal F_{\psi} K_{d_2,d_2, M'} \otimes \alpha^* \mathcal L_\psi \otimes \mathcal L_G[d_1+ d_2]$ and $K_{d_1,d_2,M}[d_1+d_2]$ are two irreducible perverse sheaves. Furthermore, by Lemma \ref{ge-geom-1}, the trace functions of these two perverse sheaves agree on the open set where $f_1$ is squarefree and $f_1$ and $f_2$ are coprime by Lemma \ref{ge-geom-1}. Restricting to a possibly smaller open set where both are lisse, we get two irreducible lisse sheaves with the same trace function, which must be isomorphic. Since $K_{d_1,d_2,M}$ is lisse of nonzero rank on an open set, both sheaves are lisse of nonzero rank, and because they are irreducible, must be middle extensions from that open set. Since both are the middle extension of the same lisse sheaf from the same open set, they are isomorphic as perverse sheaves. It follows that these two irreducible perverse sheaves have the same trace function, giving \eqref{eq-gauss-example}.  \end{proof}

%

\begin{proof}[Conclusion of geometric proof of Proposition \ref{gauss-example}] 
Given $f_1,f_2$, find $v$ coprime to $f_2$ and such that $\deg f_1 + \deg v \geq \deg f_2$, and compute using axiom (1) and the fact that $K_{\deg v,0}$ is the constant sheaf that
\begin{equation}\label{gauss-example-geometric-general-1} a(f_1v,f_2; q,\chi,M) = a(f_1,f_2;q,\chi,M) a(v,1;q,\chi,M)  \left( \frac{v}{f_2}\right)_\chi^{-1} = a(f_1,f_2;q,\chi,M)   \left( \frac{v}{f_2}\right)_\chi^{-1} .\end{equation}
By Lemma \ref{ge-geom-2} we have  \begin{equation}\label{gauss-example-geometric-general-2} a (f_1v,f_2; q ,\chi , M ) =   \frac{(-1)^{ \frac{ \deg f_2(\deg f_2-1) (q-1)}{4}}  }{  G(\chi,\psi)^{\deg f_2}}   \sum_{ \substack{ u \in \mathbb F_q[t]^+ \\ u^n | f_2}}  q^{ (n-1)  \deg v} g_\chi(f_1v, f_2/u^n) .\end{equation}

But by Lemma \ref{gauss-multiplicative-easy},  \begin{equation}\label{gauss-example-geometric-general-3} g_\chi(f_1v, f_2/u^n) = g_\chi(f_1, f_2/u^n) \left( \frac{v}{f_2/u^n}\right)_{\chi}^{-1} = g_\chi(f_1, f_2/u^n)  \left( \frac{v}{f_2}\right)_{\chi}^{-1} \end{equation}
so combining \eqref{gauss-example-geometric-general-1},  \eqref{gauss-example-geometric-general-2},  and \eqref{gauss-example-geometric-general-3}, we get
\[ a(f_1,f_2;q,\chi,M)   \left( \frac{v}{f_2}\right)_\chi^{-1} =   \frac{(-1)^{ \frac{ \deg f_2(\deg f_2-1) (q-1)} {4}}  }{  G(\chi,\psi)^{\deg f_2}}   \sum_{ \substack{ u \in \mathbb F_q[t]^+ \\ u^n | f_2}}  q^{ (n-1)  \deg u} \left( \frac{v}{f_2}\right)_{\chi}^{-1}  g_\chi(f_1, f_2/u^n) .\] 
and dividing both sides by $\left( \frac{v}{f_2}\right)_{\chi}^{-1} $ we get \eqref{eq-gauss-example} in general. \end{proof}
 
 Before performing our proof using \citep{ChintaMohler}, we will explain the relationship of the Gauss sums we work with to the formula defined by \cite{ChintaMohler}.
 
 To do this, we use notation from \citep{ChintaMohler}. They define a function $b$ as the unique multiplicative function satisfying

\[ b(\pi^{d_1}, \pi^{d_2} ) =  \begin{cases} 1 & \textrm{if } d_2=0 \\
(q^{\deg \pi } -1)  q^{ (d_2/2 -1 ) \deg \pi} & \textrm{if }d_2 \equiv 0 \bmod n\textrm{ and }d_1 \geq d_2 \\
0  & \textrm{if }d_2 \not\equiv 0 \bmod n\textrm{ and }d_1 \geq d_2>0 \\
- q^{(d_2/2-1 ) \deg \pi}  & \textrm{if } d_1 = d_2 -1\textrm{ and } d_2 \equiv 0 \bmod n \\
q^{(d_2-1 ) \deg \pi/2}  & \textrm{if } d_1 = d_2 -1\textrm{ and } d_2 \not\equiv 0 \bmod n \\
0 & \textrm{if } d_1< d_2 -1 \end{cases} \]

Let $f_{2,0}$ be $f_2$ divided by the greatest $\nchi$th power that divides $f_2$. Let $f_{2,\flat}$ be the largest squarefree divisor of $f_{2,0}$ and let $\hat{f}_1$ be the largest divisor of $f_1$ prime to $f_{2,0}$. Equivalently,  \[ f_{2,0} = \prod_{\substack{ \pi \textrm{ prime } \\  \pi \mid f_2 } } \pi^{ v_\pi(f_2) -  \nchi \lfloor \frac{v_\pi(f_2) }{\nchi}\rfloor }  \hspace{10pt} \textrm{ and } \hspace{10pt}   \hat{f}_1 = \prod_{\substack{ \pi \textrm{ prime } \\  \pi \mid f_1 \\ \nchi \mid v_\pi(f_2) }} \pi^{ v_\pi(f_1) } .\]

Let \[g\left( \left(\frac{ \cdot}{ f_2} \right)_\chi \right) = \sum_{ h \in \mathbb F_q[t]/f_{2,\flat}} \left(\frac{h}{f_2}\right)_{\chi} \psi \left( \operatorname{res} \left(\frac{h}{f_{2,\flat}}\right) \right) .\] 

\begin{lemma}\label{gauss-chinta-mohler} We have \begin{equation}\label{eq-gauss-chinta-mohler} g_\chi( f_1,f_2) \frac{1}{ q^{\deg f_2/2} } =  b(f_1,f_2) g\left( \left(\frac{ \cdot}{ f_2} \right)_\chi \right) \left( \frac{ \hat{f}_1} { f_{2,0}} \right)^{-1}_{\chi} \frac{1} {  q^{ \deg f_{2,\flat}/2}}  .\end{equation} \end{lemma}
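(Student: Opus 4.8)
The plan is to reduce \eqref{eq-gauss-chinta-mohler} to the case of prime powers $f_1 = \pi^{d_1}$, $f_2 = \pi^{d_2}$ by exploiting the (twisted) multiplicativity of both sides, and then to verify that case directly against Lemma \ref{gauss-prime-powers} and the defining values of $b$.

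First I would strip off the part of $f_1$ coprime to $f_2$. Write $f_1 = f_1^{\sharp} f_1^{\flat}$, with $f_1^{\sharp}$ the largest divisor of $f_1$ prime to $f_2$. By Lemma \ref{gauss-multiplicative-easy}, $g_\chi(f_1,f_2) = \bigl(\tfrac{f_1^{\sharp}}{f_2}\bigr)_\chi^{-1} g_\chi(f_1^{\flat},f_2)$. On the right-hand side $b(f_1,f_2) = b(f_1^{\flat},f_2)$, the factors $g\bigl(\bigl(\tfrac{\cdot}{f_2}\bigr)_\chi\bigr)$ and $q^{-\deg f_{2,\flat}/2}$ are unchanged, and $\bigl(\tfrac{\hat f_1}{f_{2,0}}\bigr)_\chi = \bigl(\tfrac{f_1^{\sharp}}{f_{2,0}}\bigr)_\chi \bigl(\tfrac{\widehat{f_1^{\flat}}}{f_{2,0}}\bigr)_\chi$; since a divisor prime to $f_2$ pairs with $f_2$ the same way it pairs with $f_{2,0}$ (the omitted primes of $f_2$ contribute residue symbols raised to exponents divisible by $n$, hence trivial), we get $\bigl(\tfrac{f_1^{\sharp}}{f_{2,0}}\bigr)_\chi = \bigl(\tfrac{f_1^{\sharp}}{f_2}\bigr)_\chi$, so both sides of \eqref{eq-gauss-chinta-mohler} change by the same factor. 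Hence we may assume $\operatorname{rad}(f_1) \mid \operatorname{rad}(f_2)$.

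Next I would show that in this range both sides have the same twisted multiplicativity in $f_2$. Given a coprime factorization $f_2 = f_2^{(1)} f_2^{(2)}$, with $f_1 = f_1^{(1)} f_1^{(2)}$ the corresponding factorization, Lemma \ref{gauss-multiplicative} writes $g_\chi(f_1,f_2)$ as $g_\chi(f_1^{(1)},f_2^{(1)})\, g_\chi(f_1^{(2)},f_2^{(2)})$ times an explicit product of four residue symbols, and the powers of $q$ factor cleanly; on the right, $b$ is multiplicative by construction, while a Chinese-remainder-theorem computation shows that $g\bigl(\bigl(\tfrac{\cdot}{f_2}\bigr)_\chi\bigr)$ and $\bigl(\tfrac{\hat f_1}{f_{2,0}}\bigr)_\chi^{-1}$ each split off their own products of residue symbols. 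The decisive point is that $g_\chi(\pi^{d_1},\pi^{d_2})$ and $b(\pi^{d_1},\pi^{d_2})$ vanish under exactly the same conditions (namely, $n \nmid d_2$ together with $d_1 \geq d_2$, or else $d_1 < d_2 - 1$), so both sides of \eqref{eq-gauss-chinta-mohler} vanish unless $v_\pi(f_1) = v_\pi(f_2) - 1$ for every prime $\pi \mid f_{2,\flat}$; on that locus the exponents measuring the discrepancy between the left- and right-hand twists are all $\equiv 0 \pmod n$, and since every residue symbol is an $n$th root of unity the two twists coincide. (Reciprocity for resultants, Lemma \ref{reciprocity} together with Lemma \ref{resultant}, is what puts the various $\bigl(\tfrac{\cdot}{\cdot}\bigr)_\chi$ into comparable form in this check.) This reduces the proof to $f_1 = \pi^{d_1}$, $f_2 = \pi^{d_2}$.

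Finally, for prime powers I would run through the cases of Lemma \ref{gauss-prime-powers} in parallel with the definition of $b$. If $d_2 = 0$, or $d_1 \geq d_2$ with $n \mid d_2$, then $f_{2,\flat} = 1$, so $g\bigl(\bigl(\tfrac{\cdot}{\pi^{d_2}}\bigr)_\chi\bigr) = 1$ and $\bigl(\tfrac{\hat f_1}{f_{2,0}}\bigr)_\chi^{-1} = 1$, and \eqref{eq-gauss-chinta-mohler} becomes an immediate comparison of powers of $q$; if $d_1 < d_2 - 1$ both sides are $0$. The substantive case is $d_1 = d_2 - 1$: there $g\bigl(\bigl(\tfrac{\cdot}{\pi^{d_2}}\bigr)_\chi\bigr)$ equals $1$ when $n \mid d_2$ and equals $\sum_{h \in \mathbb F_q[t]/\pi}\bigl(\tfrac{h}{\pi}\bigr)_\chi^{d_2}\psi(\res(h/\pi))$ otherwise, which by Lemma \ref{gauss-evaluation} is $(-1)^{\frac{\deg\pi(\deg\pi-1)(q-1)}{4}}\bigl(\tfrac{\pi'}{\pi}\bigr)_\chi^{d_2}\bigl(\tfrac{\pi'}{\pi}\bigr)_\xi G(\chi^{d_2},\psi)^{\deg\pi}$; substituting this, the value $b(\pi^{d_2-1},\pi^{d_2})$, the relevant power of $q$, and the value of $g_\chi(\pi^{d_2-1},\pi^{d_2})$ from Lemma \ref{gauss-prime-powers}, the identity collapses to the elementary equality $(-1)^{\frac{\deg\pi(\deg\pi-1)(q-1)}{4}}\bigl(\tfrac{\pi'}{\pi}\bigr)_\xi = (-1)^{\deg\pi+1}$, which follows (recalling that $q$ is odd in this setting) from Pellet's formula $\mu(\pi) = -1 = (-1)^{\deg\pi}\xi(\Delta(\pi))$ combined with $\Delta(\pi) = (-1)^{\deg\pi(\deg\pi-1)/2}\Res(\pi',\pi)$, both from Lemma \ref{mobius-evaluation}. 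I expect the main obstacle to be the multiplicativity step: tracking carefully how $f_{2,0}$, $f_{2,\flat}$ and $\hat f_1$ decompose under coprime factorizations of $f_2$, and verifying that the discrepancy between the two twists lies in $n$th powers exactly on the support of the two sides.
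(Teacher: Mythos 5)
Your approach is correct in outline but genuinely different from the paper's. You first remove the part of $f_1$ coprime to $f_2$, then reduce all the way to prime powers via twisted multiplicativity in $f_2$, and finally verify prime powers against Lemma \ref{gauss-prime-powers}. The paper instead splits $f_1 = \hat f_1 \check f_1$ and $f_2 = \hat f_2 \check f_2$ with $\hat f_2$ the largest divisor of $f_2$ prime to $f_{2,0}$; because $\hat f_2$ is automatically an $n$th power, the cross residue symbols from Lemma \ref{gauss-multiplicative} involving $\hat f_2$ are identically $1$, so \eqref{eq-gauss-chinta-mohler} factors cleanly into the three identities \eqref{gcm-hat}, \eqref{gcm-mid}, \eqref{gcm-check}. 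The $\hat{}$-part then reduces to prime powers with $n \mid d_2$, and the $\check{}$-part is dispatched by the direct observation that, on the only surviving locus $\check f_1 = \check f_2/\check f_{2,\flat}$, one has $g_\chi(\check f_1,\check f_2) = q^{\deg \check f_2 - \deg \check f_{2,\flat}}\, g\bigl(\bigl(\tfrac{\cdot}{\check f_2}\bigr)_\chi\bigr)$ without ever touching a multiplicativity lemma. In short, the paper's decomposition is chosen precisely so that no residue-symbol bookkeeping is needed, whereas yours gives one uniform reduction to prime powers at the cost of needing to match two twists explicitly.

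The gap is exactly that twist-matching step, which you identify as the hard part but only assert. What actually has to be checked is the following: after Lemma \ref{gauss-multiplicative} on the left and the CRT splittings of $g\bigl(\bigl(\tfrac{\cdot}{f_2}\bigr)_\chi\bigr)$ and $\bigl(\tfrac{\hat f_1}{f_{2,0}}\bigr)_\chi$ on the right, the ratio of the two cross-term twists factors prime by prime, and for each prime $\pi' \mid f_2^{(2)}$ (say) it equals $\left(\frac{(f_2^{(1)}/f_{2,\flat}^{(1)})/(f_1^{(1)}/\hat f_1^{(1)})}{\pi'}\right)_\chi^{v_{\pi'}(f_2)}$. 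One then observes that on the nonvanishing locus $f_1^{(1)}/\hat f_1^{(1)} = \prod_{\pi \mid f_{2,\flat}^{(1)}} \pi^{v_\pi(f_2)-1}$, so the numerator polynomial is $\prod_{\pi \mid f_2^{(1)},\, \pi\nmid f_{2,\flat}^{(1)}} \pi^{v_\pi(f_2)}$, an $n$th power; this, and not reciprocity (which never enters, since the comparison is made one ordered pair of primes at a time), is what kills the discrepancy. Absent this computation, the multiplicativity step is not actually established. Your first reduction (stripping $f_1^\sharp$) and your final prime-power verification, including the identity $(-1)^{\deg\pi+1}=(-1)^{\frac{\deg\pi(\deg\pi-1)(q-1)}{4}}\bigl(\tfrac{\pi'}{\pi}\bigr)_\xi$ from \eqref{me-3}, are both correct and agree with the corresponding pieces of the paper's argument.
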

 
 \begin{proof} We define $\hat{f}_2$ as the largest divisor of $f_2$ prime to $f_{2,0}$, in other words \[ \hat{f}_2= \prod_{\substack{ \pi \textrm{ prime } \\  \pi \mid f_2  \\ \nchi \mid v_\pi(f_2) } }\pi^{ v_\pi(f_2) } ,\]
 as well as $\check{f}_1= f_1/ \hat{f}_1$ and $\check{f}_2 = f_2/\hat{f}_2$.  It is immediate from the definitions that $\hat{f}_2$ is an $\nchi$th power, and that $\hat{f}_1$ and $\hat{f}_2$ are coprime to $\check{f}_1$ and $\check{f}_2$.
 
  By Lemma \ref{gauss-multiplicative} we have
 \[ g_\chi(f_1,f_2) = g_\chi(\hat{f}_1, \hat{f}_2) g_\chi(\check{f}_1,\check{f}_2)  \left( \frac{\hat{f}_2}{ \check{f}_2} \right)_{\chi}  \left( \frac{\check{f}_2}{ \hat{f}_2} \right)_{\chi}  \left( \frac{\hat{f}_1}{\check{ f}_2} \right)^{-1}_{\chi} \left( \frac{\check{f}_1}{ \hat{f}_2} \right)^{-1}_{\chi} .\]

Because $\hat{f}_2$ is an $\nchi$th power and prime to $\check{f}_1$ and $\check{f}_2$, we may ignore all the residue symbols involving $\hat{f}_2$, obtaining 
\begin{equation}\label{gcm-left-split} g_\chi(f_1,f_2) =  g_\chi(\hat{f}_1, \hat{f_2}) g_\chi(\check{f}_1,\check{f}_2)   \left( \frac{\hat{f}_1}{ \check{f}_2} \right)^{-1}_{\chi} \end{equation} 
Similarly, we split the right side of \eqref{eq-gauss-chinta-mohler} into $\hat{f}$ and $\check{f}$ parts. We note that $f_{2,0}$ is also $\check{f}_2$ divided by the greatest $\nchi$th power that divides $\check{f}_2$, in other words, $f_{2,0} = \check{f}_{2,0}$, so that $f_{2,\flat} = \check{f}_{2,\flat}$ and
\begin{equation}\label{gcm-right-split-g} g\left( \left(\frac{ \cdot}{ f_2} \right)_\chi \right) = g\left( \left(\frac{ \cdot}{ \check{f}_2} \right)_\chi \right).\end{equation}
The multiplicativity of $b$ gives
\begin{equation}\label{gcm-left-split-b}  b(f_1,f_2)  =  b(\hat{f}_1,\hat{f}_2)   b(\check{f}_1,\check{f}_2)  .\end{equation}

Combining \eqref{gcm-left-split}, \eqref{gcm-right-split-g}, and \eqref{gcm-left-split-b}, we see that \eqref{gauss-chinta-mohler} is equivalent to
\[ g_\chi(\hat{f}_1, \hat{f_2}) g_\chi(\check{f}_1,\check{f}_2)  \left( \frac{\hat{f}_1}{ \check{f}_2} \right)^{-1}_{\chi}  \frac{1}{ q^{ \deg \hat{f}_2/2 + \deg \check{f}_2/2}}=   b(\hat{f}_1,\hat{f}_2)   b(\check{f}_1,\check{f}_2  )g\left(\left(\frac{ \cdot}{ \check{f}_2} \right)_\chi \right) \left( \frac{ \hat{f}_1} {{f}_{2,0}} \right)^{-1}_{\chi} \frac{1} {  q^{ \deg \check{f}_{2,\flat}/2}} \]
and therefore would follow from the triple of equations
\begin{equation}\label{gcm-hat} g_\chi(\hat{f}_1, \hat{f}_2)  \frac{1}{ q^{ \deg \hat{f}_2/2}}= b(\hat{f}_1,\hat{f}_2)  
\end{equation}
\begin{equation}\label{gcm-mid}  \left( \frac{\hat{f}_1}{ \check{f}_2} \right)^{-1}_{\chi}=\left( \frac{ \hat{f}_1} {{f}_{2,0}} \right)^{-1}_{\chi}\end{equation}
\begin{equation}\label{gcm-check}  g_\chi(\check{f}_1,\check{f}_2)   \frac{1}{ q^{ \deg \check{f}_2/2}} = b(\check{f}_1,\check{f}_2)g\left(\left(\frac{ \cdot}{ \check{f}_2} \right)_\chi \right) \frac{1} {  q^{ \deg \check{f}_{2,\flat}/2}} .
\end{equation}

We now verify these three equations. \eqref{gcm-mid} follows immediately from the fact that ${f}_{2,0}$ and $\check{f}_2$ differ by an $n$th power prime to $\hat{f}_1$.

For \eqref{gcm-hat}, we note from Lemma \ref{gauss-multiplicative} that $g_\chi(f_1,f_2)$ is multiplicative when restricted to $f_2$ that are $n$th powers. Since both sides are multiplicative when restricted to this set, we can reduce to the case that $f_1$ and $f_2$ are prime powers (because any $n$th power can be factored into prime powers that are $n$th powers). In this case, it follows from the definition of $b$ and Lemma \ref{gauss-prime-powers}, noting that $G(\chi^{d_2}, \psi)=-1$ if $d_2$ is divisible by $n$.

For \eqref{gcm-check}, we note that $v_\pi ( \check{f}_2)$ is never a multiple of $n$ for any $\pi$ dividing $\check{f}_2$. It follows from this and the definition of $b$ that $b(\check{f}_1,\check{f}_2)$ vanishes unless $v_\pi (\check{f}_1) = v_\pi(\check{f}_2)-1$ for each such $\pi$. In other words, the right side of \eqref{gcm-check} vanishes unless $\check{f}_1=\check{f}_2/\check{f}_{2,\flat}$. From Lemmas \ref{gauss-multiplicative} and \ref{gauss-prime-powers}, we see that $g(\check{f}_1,\check{f}_2)$ vanishes under the same condition.

Thus, we may assume that $\check{f}_1=\check{f}_2/\check{f}_{2,\flat}$. In this case, \begin{equation}\label{b-simplification} b(\check{f}_1,\check{f}_2 ) = q^{ (\deg \check{f}_2- \deg \check{f}_{2,\flat})/2}\end{equation} since only the second-to-last case of the definition of $b$ occurs. Furthermore, we have 
\[ g_\chi(\check{f}_1,\check{f}_2)  =\sum_{ h \in \mathbb F_q[t] / \check{f}_2}  \left( \frac{h}{ \check{f}_2 } \right)_\chi  \psi \left( \operatorname{res} \left(\frac{h \check{f}_{1} }{\check{f}_{2}}\right) \right) \] \[= \sum_{ h \in \mathbb F_q[t] / \check{f}_2}  \left( \frac{h}{\check{f}_2 } \right)_\chi   \psi \left( \operatorname{res} \left(\frac{h}{\check{f}_{2,\flat}}\right) \right)=
 g\left(\left(\frac{ \cdot}{ \check{f}_2} \right)_\chi \right) q^{ \deg \check{f}_2 - \deg \check{f}_{2,\flat}}\]
 which together with \eqref{b-simplification} gives \eqref{gcm-check}.
 \end{proof}

\begin{proof}[Proof of Proposition \ref{gauss-example} using Chinta-Mohler]
Let  \[ \tilde{a}^* (f_1,f_2; q ,\chi , M ) = G(\chi,\psi)^{\deg f_2} \tilde{a}(f_1,f_2;q,\chi,M)=   (-1)^{ \frac{ \deg f_2 (\deg f_2-1) (q-1)}{4}}     \sum_{ \substack{ u \in \mathbb F_q[t]^+ \\ u^n | f_2}}  q^{ (n-1)  \deg u} g_\chi(f_1, f_2/u^n ) .\] 

We prove that $\tilde{a}^*$ satisfies the axioms of Corollary \ref{axiomatics-2} with $w_1=\epsilon_1=0$, $w_2=\epsilon_2=1$, $\gamma_1(q,\chi)=1$, $\gamma_2(q,\chi) =G(\chi,\psi)$. 

For axiom (1) we have \begin{equation}\label{gauss-example-cm-1-start} \begin{split} & \tilde{a}^* (f_1f_3, f_2f_4; q,\chi,M )\\ =& (-1)^{ \frac{ \deg f_2 (\deg f_2-1) (q-1)}{4}+ \frac{ \deg f_4 (\deg f_4-1) (q-1)}{4} + \frac{ \deg f_2 \deg f_4(q-1)}{2} }    \sum_{ \substack{ u \in \mathbb F_q[t]^+ \\ u^n | f_2f_4}}  q^{ (n-1)  \deg u} g_\chi(f_1f_3, f_2f_4/ u^n)  \end{split} \end{equation}

Because $f_2$ and $f_4$ are coprime, we can write any $u$ where $u^n\mid f_2f_4$ uniquely as $u_2u_4$ where $u_2^n$ divides $f_2$ and $u_4^n$ divides $f_4$

From Lemma \ref{gauss-multiplicative} we get
\[ g_\chi(f_1f_3, f_2f_4/ (u_2^n u_4^n) )  = g_\chi(f_1,f_2/u_2^n) g_\chi(f_3,f_4/u_4^n) \left(\frac{f_2/u_2^n}{f_4/u_4^n} \right)_\chi \left(\frac{f_4/u_4^n}{f_2/u_2^n} \right)_\chi  \left( \frac{f_1}{f_4/u_4^n} \right)_\chi^{-1} \left(\frac{f_3}{f_2/u_2^n} \right)_\chi^{-1} .\]

However, we can ignore the $u_2^n$ and $u_4^n$ factors in the power residue symbols as they are $\nchi$th powers and because $u_2$, dividing $f_2$, is prime to $f_3$ and $f_4$ and similarly $u_4$ is prime to $f_1$ and $f_2$. Thus
\begin{equation}\label{gauss-multiplicative-cm-refined} g_\chi(f_1f_3, f_2f_4/ (u_2^n u_4^n) )  = g_\chi(f_1,f_2/u_2^n) g_\chi(f_3,f_4/u_4^n) \left(\frac{f_2}{f_4} \right)_\chi \left(\frac{f_4}{f_2} \right)_\chi  \left( \frac{f_1}{f_4} \right)_\chi^{-1} \left(\frac{f_3}{f_2} \right)_\chi^{-1} .\end{equation}

Plugging \eqref{gauss-multiplicative-cm-refined} into \eqref{gauss-example-cm-1-start} gives \begin{equation}\label{gauss-example-cm-1-middle} \begin{split}&  \tilde{a}^*(f_1f_3, f_2f_4; q,\chi,M ) \\ = & \tilde{a}^*(f_1,f_2; q,\chi,M) \tilde{a}^*(f_3,f_4;q,\chi,M)  (-1)^{\frac{ \deg f_2 \deg f_4 (q-1)}{2}}  \left(\frac{f_2}{f_4} \right)_\chi \left(\frac{f_4}{f_2} \right)_\chi  \left( \frac{f_1}{f_4} \right)_\chi^{-1} \left(\frac{f_3}{f_2} \right)_\chi^{-1}.\end{split} \end{equation}

We have \[(-1)^{\frac{ \deg f_2 \deg f_4(q-1)}{2}}  = \left(\frac{f_2}{f_4} \right)_\chi^{n/2} \left(\frac{f_4}{f_2} \right)_\chi ^{n/2} \] by Lemma \ref{reciprocity}, which, plugged into \eqref{gauss-example-cm-1-middle}, verifies axiom (1).

For axiom (2), we have
\[ \tilde{a}^*(T-x, 1; q, \chi, M)=g_\chi(T-x,1 ) = 1\]
and
\[ \tilde{a}^*(1,T-x;q,\chi,M) =   g_\chi( 1, T-x) =G(\chi,\psi),\]
both using Lemma \ref{gauss-prime-powers}.

Next, let 
\[ J_1( d_1,d_2; q, \chi, M) = \begin{cases}\{ (1,1)\} & \textrm{ if } d_2=0 \\
\{ (q^{d_2},1) ,( q^{ (d_2-1) } , -1) \} & \textrm{ if }d_2 \equiv 0 \bmod n\textrm{ and }d_1 \geq d_2 \\
\emptyset  & \textrm{ if }d_2 \not\equiv 0 \bmod n\textrm{ and }d_1 \geq d_2 \\
\{ (  - q^{(d_2-1 ) }  G(\chi^{d_2} ,\psi) , -1) \} & \textrm{ if } d_1 = d_2 -1 \\
\emptyset  & \textrm{if } d_1< d_2 -1 \end{cases} \]

Then by Lemma \ref{gauss-prime-powers} we have 
\[ g_\chi(\pi^{d_1}, \pi^{d_2}) =\left(\frac{\pi'}{\pi}\right)^{d_2}_\chi  \sum_{j \in J_1(d_1,d_2; q,\chi,M)}  c_j \alpha_j^{ \deg \pi} \]
noting that the $\left(\frac{\pi'}{\pi}\right)^{d_2}_\chi$ term can be ignored in the cases where $d_2$ is divisible by $\nchi$.

Furthermore, we have
\[ \sum_{ \substack{ u \in \mathbb F_q[t]^+ \\ u^n | \pi^{d_2} }}  q^{ (n-1)  \deg u} g_\chi(\pi^{d_1}, \pi^{d_2}/u^n ) = \sum_{ c =0}^{\lfloor d_2/n\rfloor} q^{ (n-1) c \deg \pi} g_\chi(\pi^{d_1}, \pi^{d_2 -nc} ).  \]

So letting 
\[ J(d_1,d_2;q,\chi,M) =(-1)^{ \frac{ d_2 (d_2-1) (q-1)}{4}} \bigcup_{c=0}^{\lfloor d_2/n\rfloor } q^{(n-1) c}  J_1(d_1,d_2-nc;q,\chi, m)\]
 we have
\[  \tilde{a}^* (\pi^{d_1}, \pi^{d_2}; q,\chi,M) \] \[ = (-1)^{ \frac{ d_2 \deg \pi (d_2 \deg \pi -1) (q-1)}{4}} \sum_{ \substack{ w \in \mathbb F_q[t]^+ \\ u^n | \pi^{d_2} }}  q^{ (n-1)  \deg u} g_\chi(\pi^{d_1}, \pi^{d_2}/u^n ) \] \[  = (-1)^{ \frac{ d_2 \deg \pi (d_2 \deg \pi -1) (q-1)}{4}}   (-1)^{ \frac{ \deg \pi d_2 (d_2-1) (q-1)}{4}}  
\left(\frac{\pi'}{\pi}\right)^{d_2}_\chi  \sum_{j \in J(d_1,d_2; q,\chi,M) } c_j \alpha_j^{ \deg \pi} \]

verifying axiom (3) because $d_2 \equiv d_2^2\mod 2$ and thus \[ 
 (-1)^{ \frac{ d_2 \deg \pi (d_2 \deg \pi -1) (q-1)}{4}}   (-1)^{ \frac{ \deg \pi d_2 (d_2-1) (q-1)}{4}} = (-1)^{ \frac{d_2 \deg \pi  (d_2 \deg \pi - d_2) (q-1) } {4}} \] 
 \[ = (-1)^{ \frac{d_2^2 \deg \pi (\deg \pi-1) (q-1) }{4}}= (-1)^{ \frac{ d_2  \deg \pi ( \deg \pi -1) (q-1)}{4}} = \left(\frac{\pi'}{\pi} \right)^{d_2 (n/2) }_{\chi}  (-1)^{ d_2 (\deg \pi + 1)} \] by Lemma \ref{mobius-evaluation}.

Next, to verify axiom (4), it suffices to show that
\begin{equation}\label{gauss-example-cm-4-start} \begin{split} (-1)^{ \frac{ d_2 (d_2-1) (q-1)}{4}}   &  \sum_{\substack { f_1,f_2 \in \mathbb F_q[t]^+ \\ \deg f_1 =d_1, \deg f_2=d_2}}  \sum_{ \substack{ u \in \mathbb F_q[t]^+ \\ u^n | f_2}}  q^{ (n-1)  \deg u} g_\chi(f_1, f_2/u^n) 
\\=&   \sum_{j \in J(d_1,d_2; q,\chi,M)} c_j \frac{ q^{d_1 + 2 d_2}}{ \overline{\alpha}_j} .
\end{split}\end{equation}

To do this, it suffices to show the identity of formal power series in $q^{-s}$ and $q^{ - (w+1/2)}$ 
\begin{equation}\label{gauss-example-cm-4-next} \begin{split} &  \sum_{\substack f_1,f_2 \in \mathbb F_q[t]^+ }  \sum_{ \substack{ u \in \mathbb F_q[t]^+ \\ u^n | f_2}}  q^{ (n-1)  \deg u} g_\chi(f_1, f_2/u^n) q^{-s \deg f_1} q^{-(w+1/2)  \deg f_2}  \\  =&  \sum_{d_1,d_2} q^{ -s d_1} q^{-(w+1/2)  d_2}  \sum_{j \in J(d_1,d_2; q,\chi,M)}  (-1)^{ \frac{ d_2 (d_2-1) (q-1)}{4}}\frac{ q^{d_1 + 2d_2 }}{ \overline{\alpha}_j} .\end{split}\end{equation}

The change of variables $f_2 \mapsto u^n f_2$, the identity $\zeta(nw-n/2+1) = \sum_{ u \in \mathbb F_q[t]^+} q^{ ( -nw + n/2-1) \deg u} $, and finally Lemma \ref{gauss-chinta-mohler}, together transform the left side of  \eqref{gauss-example-cm-4-next} into
\begin{equation}\label{compare-to-cm}  \begin{split}&  \sum_{\substack f_1,f_2 \in \mathbb F_q[t]^+ }  \sum_{ \substack{ u \in \mathbb F_q[t]^+ }}  q^{ (n-1)  \deg u} g_\chi(f_1, f_2) q^{-s \deg f_1} q^{-(w+1/2)  (\deg f_2+ n \deg u ) }   \\
& = \zeta (nw - n/2 + 1) \sum_{\substack f_1,f_2 \in \mathbb F_q[t]^+ }  \frac{ g_\chi(f_1, f_2)}{ q^{  \deg f_2/2}} q^{ -s \deg f_1}  q^{ -w\deg f_2} \\
&= \zeta (nw - n/2 + 1) \sum_{\substack f_1,f_2 \in \mathbb F_q[t]^+ }  b(f_1,f_2) g\left( \left(\frac{ \cdot}{ f_2} \right)_\chi \right) \left( \frac{ \hat{f}_1} { f_{2,0}} \right)^{-1}_{\chi} \frac{1} {  q^{ \deg f_{2,\flat}/2}}  q^{ -s \deg f_1}  q^{ -w\deg f_2}  = Z_2(s,w) \end{split} \end{equation}
as defined in \citep[(1.6)]{ChintaMohler}.

The definition of $J$ gives
\[\sum_{j \in J(d_1,d_2; q,\chi,M)}  (-1)^{ \frac{ d_2 (d_2-1) (q-1)}{4}} c_j \frac{ q^{d_1 + 2d_2 }}{ \overline{\alpha}_j}  = \sum_{c=0}^{\lfloor d_2/n\rfloor }  \sum_{j \in J_1(d_1,d_2-nc; q,\chi,M)} \frac{ q^{d_1 + 2d_2 }}{ q^{(n-1) c} \overline{\alpha}_j}.\] This, followed by the substitution $d_2 \mapsto d_2+nc$ and the evaluation of a geometric series in $c$, implies that the right side of \eqref{gauss-example-cm-4-next} is equal to \[ \sum_{d_1,d_2} q^{ -s d_1} q^{-(w+1/2)  d_2} \sum_{c=0}^{\lfloor d_2/n\rfloor }  \sum_{j \in J_1(d_1,d_2-nc; q,\chi,M)} \frac{ q^{d_1 + 2d_2 }}{ q^{(n-1) c} \overline{\alpha}_j}\]
\[ = \sum_{d_1,d_2}  \sum_{c=0}^\infty q^{-s d_1} q^{ - (w+1/2) (d_2+nc)} \sum_{j \in J_1(d_1,d_2; q,\chi,M)} \frac{ q^{d_1 + 2d_2 + 2nc }}{ q^{(n-1) c} \overline{\alpha}_j}\] \[ =\frac{1}{1-  q^{ - n (w+1/2) } \frac{q^{ 2n}}{ q^{n-1}}} \sum_{d_1,d_2} q^{ -s d_1} q^{-(w+1/2)  d_2}  \sum_{j \in J_1 (d_1,d_2; q,\chi,M)} c_j \frac{ q^{d_1 + 2d_2 }}{ \overline{\alpha}_j}.\]

We have \begin{equation}\label{gauss-example-cm-4-if} \frac{1}{1-  q^{ - n (w+1/2) } \frac{q^{ 2n}}{ q^{n-1}}}   = \frac{1}{ 1- q^{ \frac{n}{2} +1 - nw}} \end{equation} 
and \[  \sum_{d_1,d_2} q^{ -s d_1} q^{-(w+1/2)  d_2}   \sum_{j \in J_1 (d_1,d_2; q,\chi,M)} c_j\frac{ q^{d_1 + 2d_2 }}{ \overline{\alpha}_j} \]
\[=  \sum_{\substack{ d_1 \in \mathbb N\\ d_2 =0} }  q^{d_1 -d_1 s}  + \sum_{ \substack { d_2 \in \mathbb N^+\\  d_2 \equiv 0 \bmod n\\ d_1 \geq d_2}}   q^{-  s d_1 - w d_2}  q^{d_1+ d_2/2} (1-q)  \] \[+ \sum_{i=1}^{n-1} \sum_{ \substack { d_2 \in \mathbb N\\  d_2 \equiv i \bmod n\\ d_1 = d_2-1}} q^{ -s d_1 - w d_2}  q^{ d_1 + d_2/2} G(\chi^i, \psi) -  \sum_{ \substack { d_2 \in \mathbb N\\  d_2 \equiv 0\bmod n\\ d_1 = d_2-1}} q^{ -s d_1 - w d_2}  q^{ d_1 + d_2/2+1 } \]
\begin{equation}\label{gauss-sum-cm-4-prexy}= \frac{1}{ 1- q^{1-s}} + \frac{1}{1 - q^{1-s}}  \frac{ (1-q) q^{ -n s - n w + 3n/2} }{ 1- q^{-ns - nw +3n/2}} + \sum_{i=1}^{n-1}  \frac{ q^{ - (i-1) s - i w  + 3i/2 - 1} G(\chi^i,\psi)     }{1 - q^{-ns-nw+ 3n/2}}   - \frac{  q^{ - (n-1) s - n w + 3n/2}        }{1 - q^{-ns-nw+ 3n/2}}.\end{equation}

Introducing the variables $x = q^{-s}$ and $y=q^{-w}$, we can rewrite \eqref{gauss-sum-cm-4-prexy} as
\[ \frac{1}{ 1- qx} + \frac{1}{1 - qx}  \frac{ (1-q) q^{  3n/2} x^ny^n }{ 1- q^{3n/2} x^n y^n } + \sum_{i=1}^{n-1}  \frac{ q^{3i/2 - 1} G(\chi^i,\psi) x^{i-1} y^i  } { 1- q^{3n/2} x^n y^n}  -   \frac{ q^{  3n/2}   x^{n-1} y^n    }{1 -q^{3n/2} x^n y^n }\] 
\[=   \frac{1 - q^{3n/2+1} x^n y^n }{ (1- qx) (1- q^{3n/2} x^n y^n ) } + \sum_{i=1}^{n-1}  \frac{ q^{ 3i/2 - 1} G(\chi^i,\psi) x^{i-1} y^i  } { 1- q^{3n/2} x^n y^n}  -   \frac{ q^{  3n/2}   x^{n-1} y^n    }{1 -q^{3n/2} x^n y^n }\] 
\[=   \frac{1 - q^{3n/2+1} x^n y^n  + \sum_{i=1}^{n-1} q^{3i/2 -1} G(\chi^i,\psi) x^{i-1} y^i  (1-qx) -  q^{3n/2} x^{n-1} y^n (1-qx)   }{ (1- qx) (1- q^{3n/2} x^n y^n ) } \]
\[=   \frac{1  -  q^{3n/2} x^{n-1} y^n  +   \sum_{i=1}^{n-1}  q^{ 3i/2 - 1} G(\chi^i,\psi) x^{i-1} y^i  (1-qx)   }{ (1- qx) (1- q^{3n/2} x^n y^n ) } .\]

So bringing in the initial factor \eqref{gauss-example-cm-4-if}, \eqref{gauss-example-cm-4-next} is equivalent to
\begin{equation}\label{gauss-example-cm-4-final}  Z_2 = \frac{1  -  q^{3n/2} x^{n-1} y^n  +   \sum_{i=1}^{n-1} q^{ 3i/2 - 1} G(\chi^i,\psi) x^{i-1} y^i  (1-qx)   } {  (1- q^{\frac{n}{2}+1} y^n ) (1- qx) (1- q^{3n/2} x^n y^n ) }.\end{equation}

Noting that $\tau(\epsilon^i) = G(\chi^i,\psi)$, \eqref{gauss-example-cm-4-final} is precisely \cite[(1.8)]{ChintaMohler}, finishing the proof of axiom (4).

For axiom (5), we first check that $J_1(d_1,d_2; q,\chi, M)$ has all $|\alpha_j|< q^{ \frac{d_1+2d_2-1}{2}} $ unless $(d_1,d_2)=(0,0)$ or $(0,1)$, case-by case. In the $d_2 \equiv 0 \mod n $ and $d_1 \geq d_2$ case, the key is that $d_1 \geq d_2 \geq n \geq 2$ so $ q^{ \frac{d_1+2d_2-1}{2} }> q^{d_2}$, and in the $d_1=d_2-1$ case, we have $q^{ \frac{d_1+2d_2-1}{2}} > q^{ d_2 -\frac{1}{2}}$ as long as $d_1>0$. Furthermore, in the $(0,0)$ and $(0,1)$ cases, we have $|\alpha_j| \leq q^{ \frac{d_1+2d_2}{2}}$.

By the definition of $J$ in terms of $J_1$, it follows that each $\alpha_j$ appearing either has $c=0$ and thus satisfies $|\alpha_j| < q^{ \frac{d_1+2d_2-1}{2}} $ since $d_1+d_2\geq 2$ implies $(d_1,d_2)\neq (0,0),(0,1)$, or has $c>0$ in which case $|\alpha_j|\leq  q^{(n-1)c} q^{ \frac{d_1+2(d_2-nc) }{2}} = q^{ \frac{ d_1 + 2 d_2 - 2c}{2}} <  q^{ \frac{d_1+2d_2-1}{2}}$ since $2c \geq 2>1$, verifying (5).\end{proof}

We now describe a third case where we can relate $a(f_1,f_2; q,\chi,M)$ to prior work. First, following  \cite[(3.2),(3.3)]{Chinta}, let $H(f_1,f_2)$ be the unique function satisfying 

\begin{enumerate}

\item If $\gcd(f_1f_2,g_1g_2)=1$ then \[ H(f_1g_1,f_2g_2) = \left( \frac{f_1}{g_1} \right)_{\chi}  \left( \frac{g_1}{f_1} \right)_{\chi}   \left( \frac{f_2}{g_2} \right)_{\chi}  \left( \frac{g_2}{f_2} \right)_{\chi}   \left( \frac{f_1}{g_2} \right)_{\chi} ^{-1}   \left( \frac{g_1}{f_2} \right)_{\chi} ^{-1} H(f_1,f_2) H(g_1,g_2). \]

\item For $\pi$ prime,

\[ H(\pi^{d_1},\pi^{d_2} )= \begin{cases} 
1 & \textrm{if } (d_1,d_2)=(0,0)\\
g_\chi(1,\pi) &\textrm{if } (d_1,d_2)=(1,0) \textrm{ or }(0,1)\\
g_\chi(\pi,\pi^2)g_\chi(1,\pi) & \textrm{if } (d_1,d_2)=(2,1)\textrm{ or }(1,2) \\
g_\chi(\pi,\pi^2)g_\chi(1,\pi)^2 & \textrm{if } (d_1,d_2)=(2,2) \\
0 & \textrm{otherwise} \\ \end{cases}\]

\end{enumerate}

\begin{prop}\label{double-gauss-example} Assume $\nchi$ even and $q \equiv 1\bmod 4$. 

 Take $r=2$, $M= \begin{pmatrix} \frac{n}{2}+1 & -1 \\ -1 & \frac{n}{2}+1 \end{pmatrix} $. 

Then \[ a (f_1,f_2; q ,\chi , M ) =   \frac{1  }{  G(\chi,\psi)^{\deg f_1+\deg f_2 }}   \sum_{ \substack{ a,b,c \in \mathbb F_q[t]^+ \\ a^nb^n  | f_1\\ b^n c^n | f_2 }}  q^{ (n-1)  \deg a + (2n-1) \deg b + (n-1)\deg c} H (f_1/a^nb^n, f_2/b^nc^n) .\] 

\end{prop}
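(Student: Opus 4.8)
The plan is to follow the template of Propositions \ref{easiest-example} and \ref{gauss-example}: let $\tilde a(f_1,f_2;q,\chi,M)$ be the right-hand side of the asserted identity, exhibit a compatible system $J(d_1,d_2;q,\chi,M)$, and check that the pair $(\tilde a, J)$ satisfies the five axioms of Theorem \ref{axiomatics}; uniqueness then forces $\tilde a = a$. It is cleanest to clear the Gauss-sum denominators and work with $\tilde a^*(f_1,f_2) = G(\chi,\psi)^{\deg f_1 + \deg f_2}\,\tilde a(f_1,f_2)$, verifying the axioms of Corollary \ref{axiomatics-2} with $w_1 = w_2 = 1$, $\epsilon_1 = \epsilon_2 = 1$, and $\gamma_1 = \gamma_2 = G(\chi,\psi)$ (a sign-compatible system by Hasse--Davenport). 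The hypothesis $q \equiv 1 \bmod 4$ is used repeatedly to make the sign $(-1)^{(q-1)/2}$ disappear: by Lemma \ref{reciprocity} it makes $\left(\frac{f_i}{g_i}\right)_\chi^{n/2}\left(\frac{g_i}{f_i}\right)_\chi^{n/2}$ trivial, so that twisted multiplicativity with respect to $M$ coincides, for the purposes of axiom (1), with twisted multiplicativity with respect to the matrix $\left(\begin{smallmatrix} 1 & -1 \\ -1 & 1\end{smallmatrix}\right)$ that governs Chinta's function $H$.

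Given these reductions, axioms (1), (2), (3), (5) are routine. Axiom (2) holds because a linear polynomial has no nontrivial $n$th-power divisor (as $n \geq 2$), so $\tilde a^*(1,1) = H(1,1) = 1$ and $\tilde a^*(T-x,1) = \tilde a^*(1,T-x) = g_\chi(1,T-x) = G(\chi,\psi)$. For axiom (1) one expands $\tilde a^*(f_1 f_3, f_2 f_4)$ with $\gcd(f_i,g_j) = 1$, factors each $n$th-power divisor $a,b,c$ of $f_1$ or $f_2$ into a piece dividing $f_1,f_2$ and a piece dividing $f_3,f_4$ (noting $b^n$ divides $\gcd(f_1,f_2)$, which itself splits), invokes property (1) of $H$, and discards every residue symbol involving $a,b,c$ since these are $n$th powers coprime to the relevant factors — the same bookkeeping as in the proof of Proposition \ref{gauss-example}. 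For axioms (3) and (5), at a prime power $\tilde a(\pi^{d_1},\pi^{d_2})$ is a finite sum over $(a,b,c) = (\pi^\alpha,\pi^\beta,\pi^\gamma)$ of powers of $q$ times $H(\pi^{d_1 - n\alpha - n\beta}, \pi^{d_2 - n\beta - n\gamma})$; each such $H$-value is read off from property (2) of its definition and evaluated via Lemma \ref{gauss-prime-powers} (using $G(\chi^d,\psi) = -1$ when $n \mid d$). This produces the compatible system $J(d_1,d_2;q,\chi,M)$ explicitly — a finite set of Weil numbers of the shape $q^e$ or $q^e G(\chi^i,\psi)$, together with the sign and $(\pi'/\pi)_\chi$ corrections, tracked exactly as in Proposition \ref{gauss-example} — and verifies (5) by a finite case check, since $H(\pi^{d_1},\pi^{d_2})$ vanishes outside the six pairs $(d_1,d_2) \in \{(0,0),(1,0),(0,1),(2,1),(1,2),(2,2)\}$.

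The main obstacle is axiom (4), the local-to-global sum. Packaging all $(d_1,d_2)$ into a generating function in $x = q^{-s_1}$, $y = q^{-s_2}$, the left-hand side $\sum_{f_1,f_2} \tilde a^*(f_1,f_2) x^{\deg f_1} y^{\deg f_2}$ equals (up to the zeta-function factors produced by the geometric series over the $n$th-power divisors $a,b,c$) the double Dirichlet series built from Chinta's $H$, for which a closed-form rational expression is given in \citep{Chinta}; the right-hand side predicted by $J$ is an explicit rational function built from the prime-power part of $J$ times the same geometric factors. The plan is to match these two rational functions, just as the proof of Proposition \ref{gauss-example} matched the series $Z_2$ against \cite[(1.6),(1.8)]{ChintaMohler}. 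The real work is the translation between Chinta's normalization and ours and the ensuing rational-function algebra; once that is done, the (sign-)compatibility of $J$ upgrades the identity to all $\mathbb F_{q^m}$ and, with the preceding axioms, completes the proof. As for Proposition \ref{gauss-example}, a purely geometric alternative should also be available — obtaining $K_{d_1,d_2,M}$ from $K_{d_1,d_2,M'}$ with $M' = \left(\begin{smallmatrix} 0 & -1 \\ -1 & n/2+1\end{smallmatrix}\right)$ by an $\ell$-adic Fourier transform in the first variable — but the route through \citep{Chinta} is the more direct one given the available references.
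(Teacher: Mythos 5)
Your plan coincides with the paper's proof in all essentials: verify the axioms of Corollary~\ref{axiomatics-2} for $\tilde a^* = G(\chi,\psi)^{\deg f_1+\deg f_2}\tilde a$ with $w_1=w_2=1$, $\epsilon_1=\epsilon_2=1$, $\gamma_1=\gamma_2=G(\chi,\psi)$, using $q\equiv 1\bmod 4$ and Lemma~\ref{reciprocity} to kill the sign in axiom~(1), reading off the explicit $J$ from property~(2) of $H$ together with Lemma~\ref{gauss-prime-powers} for axioms~(3) and~(5), and matching the generating function against the rational expression of \cite[Theorem~4.2]{Chinta} for axiom~(4). The paper carries out exactly this, so there is no substantive difference; your remark that a geometric Fourier-transform route should also exist is plausible but is not pursued in the paper for this proposition.
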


\begin{proof} To prove this, we verify the axioms of Theorem \ref{axiomatics-2} are satisfied for \[ \tilde{a}^* (f_1,f_2; q ,\chi , M ) =  \sum_{ \substack{ a,b,c \in \mathbb F_q[t]^+ \\ a^nb^n  | f_1\\ b^n c^n |    f_2 }}  q^{ (n-1)  \deg a + (2n-1) \deg b + (n-1)\deg c} H (f_1/a^nb^n, f_2/b^nc^n) \] with $\epsilon_1=\epsilon_2 = 1$, $w_1=w_2=1$, $\gamma_1(q,\chi)=\gamma_2(q,\chi) = G(\psi,\chi)$. 

The multiplicativity axiom (1) follows immediately from the multiplicativity axiom of $H$, noting that the factors $a^nb^n$ and $b^nc^n$ have degree divisible by $n$ and can be ignored, and that the term $\left( \frac{f_1}{g_1} \right)_\chi^{n/2} \left( \frac{g_1}{f_1} \right)_\chi^{n/2}$ is $1$ by Lemma \ref{reciprocity}, because $q \equiv 1\bmod 4$, and so can be ignored.

Axiom (2) is straightforward. In the case when $(f_1,f_2)=(T-x,1)$ or $(1,T-x)$, the sum over $a,b,c$ is trivial, and $H(f_1,f_2)= g_\chi(1,\pi) = G(\chi,\psi)$.

We have that
\begin{equation}\label{dge-3} \tilde{a}^* ( \pi^{d_1}, \pi^{d_2} ; q, \chi, M) =  \sum_{ \substack{ j_1,j_{12},j_2 \in \mathbb N\\ n(j_1 +j_{12} ) \leq d_1 \\ n(j_{12} + j_2) \leq d_2 } }  q^{((n-1) j_1 + (2n-1) j_{12} + (n-1) j_2)\deg \pi }   H( \pi^{d_1 - nj_1 - n j_{12} },  \pi^{d_2 - n j_{12} - nj_2}) . \end{equation}

From Lemma \ref{gauss-prime-powers} and the Hasse-Davenport identities, we have $g_\chi(1,\pi) = -   ( - G(\chi,\psi))^{\deg \pi}  \left( \frac{\pi'}{\pi} \right)_\chi  $ and $g_\chi(\pi,\pi^2) = - (- q G(\chi^2,\psi))^{\deg \pi}  \left( \frac{\pi'}{\pi} \right)_\chi^2$, so we can write \eqref{dge-3} as
\[  \left( \frac{\pi'}{\pi} \right)_\chi^{ d_1+d_2}   \sum_{ \substack{(j_1,j_{12},j_2, r_1,r_2) \in \mathbb N^5 \\  n j_1 +n j_{12} +r_1 = d_1 \\ n j_{12} + n j_2 +r_2 = d_2 \\ (r_1,r_2) \in \{ (0,0), (1,0),(0,1),(2,1),(1,2),(2,2)\} }} c_{ (j_1,j_{12},j_2,r_1,r_2)} \alpha_{(j_1,j_{12} j_2,r_1,r_2)}^{\deg \pi }\]
where
\[  \alpha_{(j_1,j_{12},j_2,r_1,r_2)} =   q^{(n-1) j_1 + (2n-1) j_{12} + (n-1) j_2   }\begin{cases} 
1 & \textrm{if } (r_1,r_2)=(0,0)\\
-G(\chi,\psi) &\textrm{if } (r_1,r_2)=(1,0) \textrm{ or }(0,1)\\
qG(\chi^2,\psi) G(\chi,\psi) & \textrm{if } (r_1,r_2)=(2,1)\textrm{ or }(1,2) \\
-q G(\chi^2,\psi) G(\chi,\psi)^2 & \textrm{if } (r_1,r_2)=(2,2)  \\ \end{cases}\]
and
\[ c_{(j_1,j_{12},j_{2},r_1,r_2)}= \begin{cases} 1 & \textrm{if }(r_1,r_2)=(0,0),(2,1),\textrm{ or }(1,2) \\ -1 & \textrm{if } (r_1,r_2) = (1,0),(0,1),\textrm{ or }(2,2) \\ \end{cases}. \]

So we may take \[J (d_1,d_2;q,\chi,M)= \left\{(j_1,j_{12},j_2, r_1,r_2) \in \mathbb N^5 \mid  \substack{n j_1 +n j_{12} +r_1 = d_1 \\  n j_{12} + n j_2 +r_2 = d_2 \\ (r_1,r_2) \in \{ (0,0), (1,0),(0,1),(2,1),(1,2),(2,2)\}}\right \}\] and take these $\alpha_j$ and $c_j$. By \eqref{me-3}, because $q \equiv 1 \bmod 4$, we have  $(-1)^{ (d_1 + d_2) (\deg \pi +1)} = \left(\frac{\pi'}{\pi}\right)_{\chi}^{ (d_1+d_2) (n/2)}$. This, and the definition of $J$, implies $\tilde{a}^*$ satisfies axiom (3).

 $J$ is a manifestly a compatible system of sets of ordered pairs. For axiom (4), we must check
\[ \sum_{f_1,f_2 \in \mathbb F_q[t]^+} \tilde{a}^* ( f_1,f_2; q,\chi,M) x^{ \deg f_1} y^{\deg f_2}  \] \[=  \sum_{j_1,j_{12},j_2 \in \mathbb N} \sum_{(r_1,r_2) \in \{(0,0),(0,1),(1,0),(1,2),(2,1),(2,2)\}}  c_{(j_1,j_{12}, j_2,r_1,r_2)} \frac{q^{2d_1 +2d_2}} {\overline{\alpha}_{(j_1,j_{12},j_2,r_1,r_2)}}  x^{ n j_1 + nj_{12} + r_1} y^{n j_{12} + nj_2 +r_2}. \]

We have 
\[ \frac{q^{2d_1 +2d_2}} {\overline{\alpha}_{(j_1,j_{12},j_2,r_1,r_2)}}=  q^{ (n+1) j_1 +(2n+1)  j_{12}+ (n+1) j_2 }  \begin{cases} 
1 & \textrm{if } (r_1,r_2)=(0,0)\\
-q G(\chi,\psi) &\textrm{if } (r_1,r_2)=(1,0) \textrm{ or }(0,1)\\
q^3 G(\chi^2,\psi) G(\chi,\psi) & \textrm{if } (r_1,r_2)=(2,1)\textrm{ or }(1,2) \\
-q^4 G(\chi^2,\psi) G(\chi,\psi)^2 & \textrm{if } (r_1,r_2)=(2,2)  \\ \end{cases}.\]
Here we use $G(\chi,\psi) \overline{G(\chi,\psi)}=q$ to calculate the inverse conjugate of $\alpha$. 

Hence we have
\[\sum_{j_1,j_{12},j_2 \in \mathbb N} \sum_{(r_1,r_2) \in \{(0,0),(0,1),(1,0),(1,2),(2,1),(2,2)\}}   c_{(j_1,j_{12}, j_2,r_1,r_2)}\frac{q^{2d_1 +2d_2}} {\overline{\alpha}_{(j_1,j_{12},j_2,r_1,r_2)}}  x^{ n j_1 + nj_{12} + r_1} y^{n j_{12} + nj_2 +r_2} =\]\[ \frac{1+ qG(\chi,\psi) x + q G(\chi,\psi) y +q^3 G(\chi^2,\psi) G(\chi,\psi) x^2 y +q^3 G(\chi^2,\psi) G(\chi,\psi)  xy^2 + q^4 G(\chi^2,\psi) G(\chi,\psi)^2 x^2 y^2 }{ (1 - q^{n+1} x^n) (1- q^{2n+1} x^ny^n ) (1-q^{n+1} x^n y^2 )}.\]

By the definition of the series $Z(x,y)$ in \cite{Chinta}, we have

\[ Z ( x,y) =  \sum_{f_1,f_2 \in \mathbb F_q[t]^+} \tilde{a}^* ( f_1,f_2; q,\chi,M) x^{ \deg f_1} y^{\deg f_2}. \]  According to \cite[Theorem 4.2]{Chinta}, upon observing that $\tau_1= G(\chi,\psi)$ and $\tau_2 =G(\chi^2,\psi)$, we have
\[Z ( x,y) =\] \[ \frac{1+ qG(\chi,\psi) x + q G(\chi,\psi) y +q^3 G(\chi^2,\psi) G(\chi,\psi) x^2 y +q^3 G(\chi^2,\psi) G(\chi,\psi)  xy^2 + q^4 G(\chi^2,\psi) G(\chi,\psi)^2 x^2 y^2 }{ (1 - q^{n+1} x^n) (1- q^{2n+1} x^ny^n ) (1-q^{n+1} x^n y^2 )}\]
which is exactly the desired identity. 

For axiom (5), note that

\[ \log_q |\alpha_{(j_1,j_{12},j_2,r_1,r_2)} | = \left( n-1 \right) j_1  + \left(2 n-1\right) j_{12} + \left(n-1\right) j_2 + \begin{cases} 
0 & \textrm{if } (r_1,r_2)=(0,0)\\
 \frac{1}{2}  &\textrm{if } (r_1,r_2)=(1,0) \textrm{ or }(0,1)\\
2& \textrm{if } (r_1,r_2)=(2,1)\textrm{ or }(1,2) \\
\frac{3}{2}  & \textrm{if } (r_1,r_2)=(2,2)  \\ \end{cases}\]
which is $< n j_1 + 2n j_{12} + n j_2 +r_1 + r_2-\frac{1}{2} $ as long as $n j_1 + 2n j_{12} + n j_2 +r_1 + r_2 \geq 2$.

\end{proof}

\bibliographystyle{plainnat}

\bibliography{references}

\end{document}